\newtheorem{theorem}{Theorem}[section]
\newtheorem*{theorem-non}{Theorem}
\newtheorem{lemma}[theorem]{Lemma}
\theoremstyle{remark}
\newtheorem{question}[theorem]{Question}
\newtheorem{prop}[theorem]{Proposition}
\newcommand{\wpc}{RGPC}
\newcommand{\wpcs}{RGPCs}
\newcommand{\rwo}{RGO}
\newcommand{\rwos}{RGOs}
\DeclareMathOperator{\range}{Im }
\DeclareMathOperator{\Tr}{Tr}
\DeclareMathOperator{\rest}{rest}
\DeclareMathOperator{\Aut}{Aut}
\def\sver{1}
\begin{document}

\begin{frontmatter}
\title{Limiting entropy of determinantal processes}
\runtitle{Limiting entropy of determinantal processes}

\begin{aug}
\author[A]{\fnms{Andr\'as} \snm{M\'eszaros}\ead[label=e1]{Meszaros\_Andras@phd.ceu.edu}},
\address[A]{Central European University, Budapest
\printead{e1}}

\end{aug}

\begin{abstract}
We extend Lyons's tree entropy theorem to general determinantal measures. As a byproduct we show that the sofic entropy of an invariant determinantal measure does not depend on the chosen sofic approximation.

\end{abstract}

\begin{keyword}[class=MSC2010]
\kwd[Primary ]{60C05}
\kwd{37A35}
\kwd[; secondary ]{60K99}
\kwd{60B99}
\end{keyword}

\begin{keyword}
\kwd{determinantal processes}
\kwd{spanning trees}
\kwd{tree entropy}
\kwd{sofic entropy}
\kwd{weak convergence}
\end{keyword}

\end{frontmatter}

\section{Introduction}
 
Let $P=(p_{ij})$ be an orthogonal projection matrix, where rows and columns are both indexed with a finite set $V$. Then there is a unique probability measure $\eta_P$ on the subsets of $V$ such that for every $F\subset V$ we have 
\[\eta_P(\{B|F\subset B\subset V\})=\det (p_{ij})_{i,j\in F}.\]
The measure $\eta_P$ is called the \emph{determinantal measure} corresponding to $P$ \cite{lydet}. Let $B^P$ be a random subset of $V$ with distribution $\eta_P$.  In this paper we investigate the asymptotic behavior of the    \emph{Shannon-entropy} of $B^P$ defined as
\[H(B^P)=\sum_{A\subset V} -\mathbb{P}(B^P=A)\log \mathbb{P}(B^P=A).\] 

Let $P_1,P_2,\dots$ be a sequence of orthogonal projection matrices. Assume that rows and columns of $P_n$ are both indexed with the finite set~$V_n$. Let $G_n$ be a graph on the vertex~set~$V_n$.  Throughout the paper we assume that  the degrees of  graphs are at most $D$ for some fixed finite $D$. Our main theorem is the following.

\begin{theorem-non}
Assume that the sequence of pairs $(G_n,P_n)$ is Benjamini-Schramm convergent and tight. Then
\[\lim_{n\to\infty} \frac{H(B^{P_n})}{|V_n|}\]
exists.
\end{theorem-non}

Note that this theorem will be restated in a slightly more general and precise form as Theorem \ref{MainThm_t} in the next section. 
We will also give a formula for the limit.

We define  \emph{Benjamini-Schramm convergence} of  $(G_n,P_n)$ along the lines of \cite{besch} and \cite{ally} via the following local sampling procedure. Fix any positive integer $r$, this will be our radius of sight. For a vertex $o\in V_n$ let $B_r(G_n,o)$ be the $r$-neighborhood of $o$  in the graph $G_n$, and let $M_{n,r,o}$ be the submatrix of $P_n$ determined by  rows and columns with indeces in $B_r(G_n,o)$. Then the outcome of the local sampling at $o$ is the pair $(B_r(G_n,o),M_{n,r,o})$. Of course, we are only interested in the outcome up to rooted isomorphism. Now if we pick $o$ as a uniform random element of $V_n$, we get a probability measure $\mu_{n,r}$ on the set of isomorphism classes of pairs $(H,M)$, where $H$ is a rooted $r$-neighborhood and $M$ is a matrix where  rows and columns are indexed with the vertices of $H$. We say that the sequence $(G_n,P_n)$ converges if for any fixed $r$ the measures $\mu_{n,r}$ converge weakly as $n$ tends to infinity.  
 See the next section for more details including the description of the limit object. 

To define the notion of tightness,  we introduce a measure $\nu_n$ on $\mathbb{N}\cup\{\infty\}$ for each pair $(G_n,P_n)$ as follows.  Given $k\in \mathbb{N}\cup\{\infty\}$ we set 

\[\nu_n(\{k\})=|V_n|^{-1}\sum_{\substack{u,v\in V_n\\ d_n(u,v)=k}} |P_n(u,v)|^2,\]
where $d_n$ is the graph metric on $V_n=V(G_n)$. Then the sequence $(G_n,P_n)$ is tight if the family of measures $\nu_n$ is tight, that is, for each $\varepsilon>0$ we have a finite $R$ such that 
\[\nu_n\left(\{R+1,R+2,\dots\}\cup\{\infty\}\right)<\varepsilon\]
for all $n$. Tightness makes sure that the local sampling procedure from the previous paragraph detects most of the significant matrix entries for large enough $r$.  

Note that a related convergence notion of operators was introduced by Lyons and Thom~\cite{lyth}. We expect that their notion is slightly stronger, but were unable to clarify this.
 
The idea of the proof of the main theorem is the following. Consider a uniform random ordering of $V_n$. Then using the chain rule for conditional entropy we can write $H(B^{P_n})$ as the sum of $|V_n|$ conditional entropies. We show that in the limit we can control these conditional entropies. This method in the context of local convergence first appeared in \cite{borgs}. 

Now we describe a special case of our theorem. Consider a finite connected graph $G$, and consider the uniform measure on the set of spanning trees of $G$. This measure turns out to be a determinantal measure, the corresponding projection matrix $P_{\bigstar}(G)$ is called the transfer-current matrix \cite{trans}. Since this is a uniform measure, the Shannon-entropy is simply $\log \tau(G)$, where $\tau(G)$ is the number of spanning trees in $G$. A theorem of Lyons \cite{treeent} states that if $G_n$ is a Benjamini-Schramm convergent sequence of finite connected graphs then
\[\lim_{n\to\infty} \frac{\log\tau(G_n)}{|V(G_n)|}\]
exists. This theorem now follows from our results, because it is easy to see that the sequence $(L(G_n),P_{\bigstar}(G_n))$ is convergent and tight in our sense, where $L(G_n)$ is the line graph of $G_n$. See Section \ref{sec7}. Note that we need to take the line graph of $G_n$, because the uniform spanning tree measure is defined on the edges of $G_n$ rather than the vertices of $G_n$. We also obtain a formula for limit which is different from Lyons's original formula. However, in practice it seems easier to evaluate Lyons's original formula.

\if\sver 1

 Another application comes from ergodic theory. Let $\Gamma$ be a finitely generated countable group, and let $T$ be an invariant positive contraction on $\ell^2(\Gamma)$. Here a linear operator is called a positive contraction if it is positive semidefinite and has operator norm at most $1$. Invariance means that for any $\gamma,g_1,g_2\in \Gamma$  we have
 \[\langle Tg_1,g_2\rangle=\langle T(\gamma^{-1}g_1),\gamma^{-1}g_2\rangle.\]
Note that here we identify elements of $\Gamma$ with their characteristic vectors. 
Then the determinantal  measure $\mu_T$ corresponding to $T$ gives us an invariant measure on $\{0,1\}^\Gamma$. Note that there is a natural graph structure on $\Gamma$. Namely, we can fix a finite generating set $S$, and consider the corresponding Cayley-graph $\text{Cay}(\Gamma,S)$.  When $\Gamma$ belongs to the class of sofic groups, one can define the so-called sofic entropy of this invariant measure \cite{abwe}. This is done by first considering an approximation of $\text{Cay}(\Gamma,S)$ by a sequence of finite graphs $G_n$, and then investigating how  we can  model $\mu_T$ on these finite graphs. In general it is not known whether sofic entropy depends on the chosen approximating sequence $G_n$ or not, apart from certain trivial examples. However, in our special case, our results allow us to give a formula for the sofic entropy, which only depends on the measure $\mu_T$, but not on the finite approximations. This shows that in this case the sofic entropy does not depend on the chosen sofic approximation.    

\else
Another application comes from ergodic theory. Let $Q$ be a vertex transitive connected infinite graph which is a Benjamini-Schramm limit of finite graphs, and let $T$ be an invariant positive contraction on $\ell^2(V(Q))$. Here a linear operator is called a positive contraction if it is positive semidefinite and has operator norm at most $1$. Invariance means that for any $\gamma\in \Aut(Q)$ and $v_1,v_2\in V(Q)$  we have
\[\langle Tv_1,v_2\rangle=\langle T(\gamma^{-1} v_1),\gamma^{-1} v_2\rangle.\footnote{Here we identify elements of $\Gamma$ with their characteristic vectors.}\]
Then the determinantal  measure $\mu_T$ corresponding to $T$ gives us an invariant measure on $\{0,1\}^{V(Q)}$. 
One can define the so-called sofic entropy of this invariant measure \cite{abwe}. This is done by first considering an approximation of $Q$ by a sequence of finite graphs $G_n$, and then investigating how  we can  model $\mu_T$ on these finite graphs. In general it is not known whether sofic entropy depends on the chosen approximating sequence $G_n$ or not, apart from certain trivial examples. However, in our special case, our results allow us to give a formula for the sofic entropy, which only depends on the measure $\mu_T$, but not on the finite approximations. This shows that in this case the sofic entropy does not depend on the chosen sofic approximation.    
\fi

Observe that in our main theorem the graphs $G_n$ do not play any role in the definition of the random subsets $B^{P_n}$ or the Shannon entropy $H(B^{P_n})$, they are only there to help us define our convergence notion. This suggests that there might be a notion of convergence of orthogonal projection matrices without any additional graph structure such that the normalized Shannon entropy of $B^{P_n}$ is  continuous.


\textbf{Structure of the paper.}
 In Section \ref{sec2} we explain the basic definitions and state our results. 
In Section \ref{sec3} we investigate what happens if we condition a Benjamini-Schramm convergence sequence of determinantal measures in a Benjamini-Schramm convergent way. 
In Sections \ref{sec4}, \ref{sec5} and \ref{sec6} we prove the theorems stated in Section \ref{sec2}. 
In Section \ref{sec7} we explain the connections of our results and Lyons's tree entropy theorem. The proof of a  technical lemma about the measurability of the polar decomposition is given in the Appendix.         

\section{Definitions and statements of the results}\label{sec2}
\subsection{The space of rooted graphs and sofic groups}
Fix a degree bound $D$. A \emph{rooted graph} is a pair $(G,o)$ where $G$ is a (possibly infinite) connected graph with degrees at most $D$, $o\in V(G)$ is a distinguished vertex of $G$ called the root. Given two rooted graphs $(G_1,o_1)$ and $(G_2,o_2)$ their distance is defined to be the infimum over all $\varepsilon>0$ such that for $r=\lfloor \varepsilon^{-1}\rfloor$ there is a root preserving graph isomorphism  from $B_r(G_1,o_1)$ to $B_r(G_2,o_2)$. Let $\mathcal{G}$ be the set of isomorphism classes of rooted graphs. With the above defined distance $\mathcal{G}$ is a compact metric space. Therefore, the set of probability measures $\mathcal{P}(\mathcal{G})$ endowed with the weak* topology is also compact. A sequence of random rooted graphs $(G_n,o_n)$ \emph{Benjamini-Schramm converges} to the random rooted graph $(G,o)$, if their distributions converge in $\mathcal{P}(\mathcal{G})$. Given any finite graph $G$, we can turn it into a random rooted graph $U(G)=(G_o,o)$ by considering a uniform random vertex $o$ of $G$ and its connected component $G_o$. A sequence of finite graphs $G_n$ Benjamini-Schramm converges to the random rooted graph $(G,o)$ if the sequence $U(G_n)$ Benjamini-Schramm converges to $(G,o)$.   

Let $S$ be a finite set, an \emph{$S$-labeled Schreier graph} is a graph where each edge is oriented and labeled with an element from $S$, moreover for every vertex $v$ of the graph and every $s\in S$ there is exactly one edge labeled with $s$ entering $v$ and there is exactly one edge labeled with $s$ leaving $v$. For example, if $\Gamma$ is a group with generating set $S$, then its Cayley-graph $\text{Cay}(\Gamma,S)$ is an $S$-labeled Schreier-graph. The notion of Benjamini-Schramm convergence can be extended to the class of $S$-labeled Schreier-graphs with the modification that graph isomorphisms are required to respect the orientation and labeling of the edges.  Let $\Gamma$ be a finitely generated group. Fix a finite generating set $S$, and consider the Cayley-graph $G_\Gamma=\text{Cay}(\Gamma,S)$. Let $e_\Gamma$ be the identity of $\Gamma$. We say that $\Gamma$ is \emph{sofic} if there is a sequence of finite $S$-labeled Schreier-graphs $G_n$, such that $G_n$ Benjamini-Schramm converges to $(G_\Gamma,e_\Gamma)$.

\subsection{The space of rooted graph-operators}

Fix a degree bound $D$, and let $K$ be a non-empty finite set.

A \emph{rooted graph-operator} (\rwo) is a triple $(G,o,T)$, where $(G,o)$ is a rooted graph and $T$ is a bounded operator on $\ell^2(V(G)\times K)$. In this paper we will use real Hilbert spaces, but the results can be generalized to the complex case as well. Note that to prove our main theorem it suffices to only consider the case $|K|=1$. The usefulness of allowing $|K|>1$ will be only clear in Section \ref{sec5}, where we extend our results to positive contractions.

Given two \rwos\  $(G_1,o_1,T_1)$ and $(G_2,o_2,T_2)$ their distance  $d((G_1,o_1,T_1),(G_2,o_2,T_2))$ is 
defined as the infimum over all $\varepsilon>0$ such that for $r=\lfloor \varepsilon^{-1}\rfloor$ there is a root preserving graph isomorphism $\psi$ from $B_r(G_1,o_1)$ to $B_r(G_2,o_2)$ with the property that 
\begin{equation}\label{ineq1}
|\langle T_1(v,k), (v',k')\rangle-\langle T_2(\psi(v),k), (\psi(v'),k')\rangle|<\varepsilon
\end{equation} 
for every $v,v'\in V(B_r(G_1,o_1))$ and $k,k'\in K$. Here we identified elements of $V(G_i)\times K$ with their characteristic vectors in $\ell^2(V(G_i)\times K)$. 

Two \rwos\  $(G_1,o_1,T_1)$ and $(G_2,o_2,T_2)$ are called isomorphic if their distance is $0$, or equivalently if there is a root preserving graph isomorphism $\psi$ from $(G_1,o_1)$ to $(G_2,o_2)$ such that \[\langle T_1(v,k),(v',k')\rangle=\langle T_2(\psi(v),k), (\psi(v'),k')\rangle\] for every $v,v'\in V(G_1)$ and $k,k'\in K$. Let $\mathcal{\rwo}$ be the set of isomorphism classes of \rwos. For any $0<B<\infty$, we define
\[\mathcal{\rwo}(B)=\{(G,o,T)\in \mathcal{\rwo}|\quad \|T\|\le B\}.\]
One can prove that $\mathcal{\rwo}(B)$ is a compact metric space with the above defined distance $d$. Let $\mathcal{P}(\mathcal{\rwo}(B))$ be the set of probability measures on $\mathcal{\rwo}(B)$ endowed with the weak* topology, this is again a compact space. Often it will be more convenient to consider  an element $\mathcal{P}(\mathcal{\rwo})$ as a random \rwo.

A \rwo\ $(G,o,T)$ is called a \emph{rooted graph-positive-contraction} (\wpc) if $T$ is a self-adjoint positive operator with norm at most $1$. Then the set $\mathcal{\wpc}$ of isomorphism classes of \wpcs\ is a compact metric space. Therefore, $\mathcal{P}(\mathcal{\wpc})$ with the weak* topology is compact.

We need a slight generalization of the notion of \rwo. An \emph{$h$-decorated \rwo} is a tuple  $(G,o,T,A^{(1)},A^{(2)},\dots,A^{(h)})$, where $G,o$ and $T$ are like above, $A^{(1)},A^{(2)},\dots,A^{(h)}$ are subsets of $V(G)\times K$. Given two $h$-decorated \rwos\  $(G_1,o_1,T_1,A^{(1)}_1,A^{(2)}_1,\dots,A^{(h)}_1)$ and  $(G_2,o_2,T_2,A^{(1)}_2,A^{(2)}_2,\dots,A^{(h)}_2)$ their distance is 
defined as the infimum over all $\varepsilon>0$ such that for $r=\lfloor \varepsilon^{-1}\rfloor$ there is a root preserving graph isomorphism $\psi$ from $B_r(G_1,o_1)$ to $B_r(G_2,o_2)$ satisfying the property given in (\ref{ineq1}), and for $i=1,2,\dots,h$ we have \[\bar{\psi}(A_1^{(i)}\cap(B_r(G_1,o_1)\times K))=A_2^{(i)}\cap(B_r(G_2,o_2)\times K),\] where $\bar{\psi}(v,k)=(\psi(v),k)$. 

Two $h$-decorated \rwos\  $(G_1,o_1,T_1,A^{(1)}_1,\dots,A^{(h)}_1)$ and $(G_2,o_2,T_2,A^{(1)}_2,\dots,A^{(h)}_2)$ are called isomorphic if their distance is $0$. Let $\mathcal{\rwo}_h$ be the set of isomorphism classes of $h$-decorated \rwos. We also define $\mathcal{\rwo}_h(B)$ and $\mathcal{\wpc}_h$ the same way as their non-decorated versions were defined. With the above defined distance  they  are compact metric spaces. Similarly as before,  $\mathcal{P}(\mathcal{\rwo}_h(B))$ and $\mathcal{P}(\mathcal{\wpc}_h)$, endowed with the weak* topology, are compact spaces. Whenever the value of $h$ is clear from the context, we  omit it and simply use   the term "decorated \rwo".

A \emph{finite graph-positive-contraction} is a pair $(G,T)$, where $G$ is finite graph with degrees at most $D$, and $T$ is a positive contraction on $\ell^2(V(G)\times K)$. It can be turned into a random \wpc\ 
 \[U(G,T)=(G_o,o,T_o)\] by choosing $o$ as a uniform random vertex of $G$.

Note that all the definitions above depend on the choice of the finite set $K$. In most of the paper we can keep $K$ as fixed. Whenever we need to emphasize the specific choice of $K$, we will refer to $K$ as the support set of \rwos. Unless stated otherwise the support set is always assumed to be $K$. Let $L\subset K$ and let  $(G,o,T)$ be a \rwo\ with support set $K$. Let $P_L$ be the orthogonal projection from $\ell^2(V(G)\times K)$ to $\ell^2(V(G)\times L)\subset \ell^2(V(G)\times K)$. We define the operator $\rest_L(T)$ on $\ell^2(V(G)\times L)$ as $\rest_L(T)=P_L T\restriction_{\ell^2(V(G)\times L)}$. So $(G,o,\rest_L (T))$ is an \rwo\ with support set $L$.  

Sometimes we need to consider more than one operator on a rooted graph. A \emph{double \rwo} will mean a tuple $(G,o,T_1,T_2)$ where $(G,o)$ is a rooted graph and $T_1$, $T_2$ are bounded operators on $\ell^2(V(G)\times K)$. We omit the details how the set of isomorphism classes of double \rwos\ can be turned into a metric space. It is also clear what we mean by a decorated double \rwo, or  a triple \rwo, or a double \wpc. 


\subsection{Determinantal processes}

Let $E$ be a countable set, and $T$ be a positive contraction of $\ell^2(E)$. Then there is a random subset $B^T$ of $E$ with the property that for each finite subset $F$ of $E$ we have
\[\mathbb{P}[F\subset B^T]=\det(\langle Tx, y\rangle)_{x,y\in F},\]
where we identify an element $x\in E$ with its characteristic vector in $\ell^2(E)$. The distribution of $B^T$ is uniquely determined by these constraints, and it is called the \emph{determinantal measure} corresponding to $T$ \cite{lydet}.

Using the definition of the random subset $B^T$, we can define a map \break $\tau:\mathcal{\wpc}\to \mathcal{P}(\mathcal{\wpc}_1)$ by $\tau(G,o,T)=(G,o,T,B^T)$. This induces a  map \break $\tau_*:\mathcal{P}(\mathcal{\wpc})\to\mathcal{P}(\mathcal{P}(\mathcal{\wpc}_1))$. Taking  expectation  we get the map \break $\mathbb{E}\tau_*:\mathcal{P}(\mathcal{\wpc})\to\mathcal{P}(\mathcal{\wpc}_1)$. So given a random \wpc\ $(G,o,T)$ the meaning of $(G,o,T,B^T)$ is ambiguous. Unless stated otherwise $(G,o,T,B^T)$ will mean a random decorated \wpc, i.e., its distribution is an element of $\mathcal{P}(\mathcal{\wpc}_1)$.
\begin{prop}\label{prop1}
The maps $\tau,\tau_*$ and $\mathbb{E}\tau_*$ are continuous.
\end{prop}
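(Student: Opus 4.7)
The plan is to prove continuity of $\tau$ first, and then deduce continuity of $\tau_*$ and $\mathbb{E}\tau_*$ as formal consequences. I would check sequential continuity of $\tau$: given $(G_n,o_n,T_n)\to (G,o,T)$ in $\mathcal{\wpc}$, show $\tau(G_n,o_n,T_n)\to \tau(G,o,T)$ weakly in $\mathcal{P}(\mathcal{\wpc}_1)$. To verify weak convergence I would test against an arbitrary $f\in C(\mathcal{\wpc}_1)$. Since $\mathcal{\wpc}_1$ is compact, $f$ is uniformly continuous, so for each $\epsilon>0$ there is an $r$ such that the value of $f$ at a decorated \wpc\ is determined up to $\epsilon$ by its $r$-ball data: the isomorphism class of the rooted $r$-ball, the finite submatrix on $B_r\times K$ (up to $\epsilon$-tolerance in the entries), and the decoration restricted to $B_r\times K$.

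The hypothesis $(G_n,o_n,T_n)\to (G,o,T)$ supplies, for all large $n$, a root-preserving isomorphism $\psi_n$ of $r$-balls such that the submatrix of $T_n$ on $B_r(G_n,o_n)\times K$ converges entrywise (after transport by $\psi_n$) to the submatrix of $T$ on $B_r(G,o)\times K$. The crux is then the finite-dimensional continuity statement for the determinantal measure itself: for any finite set $F\subset E$ and any positive contraction $S$ on $\ell^2(E)$, the distribution of $B^S\cap F$ depends only on the submatrix $S|_F$ and is in fact a polynomial function of its entries. This is just the inclusion--exclusion identity
\[
\mathbb{P}[B^S\cap F=A]=\sum_{C\subset F\setminus A}(-1)^{|C|}\det\bigl(\langle Sx,y\rangle\bigr)_{x,y\in A\cup C},\qquad A\subset F,
\]
whose right-hand side is a polynomial in the entries of $S|_F$. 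Applying this with $S=T_n$ and $F=B_r(G_n,o_n)\times K$, the convergence of submatrices yields convergence of the joint law of the $r$-ball data of $(G_n,o_n,T_n,B^{T_n})$ to that of $(G,o,T,B^T)$. Combined with the choice of $r$, this gives $\int f\,d\tau(G_n,o_n,T_n)\to \int f\,d\tau(G,o,T)$ up to $O(\epsilon)$; letting $\epsilon\to 0$ completes the proof of continuity of $\tau$.

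Continuity of $\tau_*$ then follows because $\Phi\circ\tau\in C(\mathcal{\wpc})$ for every $\Phi\in C(\mathcal{P}(\mathcal{\wpc}_1))$, so $\int \Phi\,d(\tau_*\mu_n)=\int \Phi\circ\tau\,d\mu_n$ converges by weak-$*$ convergence of $\mu_n$. Continuity of $\mathbb{E}\tau_*$ is analogous: for $f\in C(\mathcal{\wpc}_1)$ the function $x\mapsto \int f\,d\tau(x)$ is continuous and bounded on $\mathcal{\wpc}$ by continuity of $\tau$, and $\int f\,d(\mathbb{E}\tau_*\mu)=\int\!\int f\,d\tau(x)\,d\mu(x)$ depends continuously on $\mu$ in the weak-$*$ topology.

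The main obstacle is essentially notational: matching the two layers of $\epsilon$-tolerance (one in the metric on $\mathcal{\wpc}$ inherited from \eqref{ineq1}, another in the metric on $\mathcal{\wpc}_1$) so that entrywise convergence of finite submatrices translates into weak convergence of the full joint law including the random decoration $B^T$. There is no deeper difficulty, because the random subset $B^T$ enters only through its finite marginals, which are polynomial functions of finite submatrices of $T$ as displayed above.
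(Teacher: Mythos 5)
Your argument is correct: the inclusion--exclusion identity expressing $\mathbb{P}[B^S\cap F=A]$ as a polynomial in the entries of the finite submatrix $S|_F$ is exactly the right ingredient, and the reductions to $r$-ball data via uniform continuity and the formal deduction of continuity of $\tau_*$ and $\mathbb{E}\tau_*$ from continuity of $\tau$ are sound. The paper in fact states Proposition \ref{prop1} without proof, treating it as routine, so your write-up supplies the standard argument the author left implicit.
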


\subsection{Trace and spectral measure}

Given a random \rwo\ $(G,o,T)$ we define 
\[\Tr(G,o,T)=\mathbb{E}\sum_{k\in K} \langle T(o,k),(o,k) \rangle.\]
We extend the definition to the decorated case in the obvious way.

Given a random \wpc\ $(G,o,T)$ its \emph{spectral measure} is the unique measure $\mu=\mu_{(G,o,T)}$ on $[0,1]$ with the property that, for any integer $n\ge 0$ we have
\[\Tr(G,o,T^n)=\int_0^1 x^n d\mu.\] 
Note that $\mu([0,1])=|K|$. Also if $T$ is a projection with probability $1$, then we have \[\mu=\Tr(G,o,T) \delta_1+(|K|-\Tr(G,o,T))\delta_0.\] If $(G,T)$ is a finite graph-positive-contraction, then the spectral measure of $U(G,T)$ can be obtained as \[\frac{1}{|V(G)|}\sum_{i=1}^{|V(G)\times K|} \delta_{\lambda_i},\] where $\lambda_1,\lambda_2,\dots,\lambda_{|V(G)\times K|}$ are the eigenvalues of $T$ with multiplicity.

\subsection{An equivalent characterization of tightness}

We already defined the notion of tightness in the Introduction. Here we repeat the definition in a slightly more general setting. For a finite graph-positive-contraction $(G,T)$ we define the measure $\nu_{(G,T)}$ on $\mathbb{N}\cup \{\infty\}$ by setting
\[\nu_{(G,T)}(\{t\})=|V(G)|^{-1}\sum_{\substack{(v_1,k_1),(v_2,k_2)\in V(G)\times K\\ d_G(v_1,v_2)=t}} |\langle T(v_1,k_1) ,(v_2,k_2)\rangle|^2,\] 
for all $t\in \mathbb{N}\cup \{\infty\}$. A sequence $(G_n,T_n)$ of finite graph-positive-contractions is tight if the family of measures $\nu_{(G_n,T_n)}$ is tight, that is, for each $\varepsilon>0$ we have a finite $R$ such that 
\[\nu_{(G_n,T_n)}\left(\{R+1,R+2,\dots\}\cup\{\infty\}\right)<\varepsilon\]
for all $n$. The next lemma gives an equivalent characterization of tightness.
\begin{lemma}\label{tightl}
Let  $(G_n,P_n)$ be a Benjamini-Schramm convergent sequence of finite graph-positive-contractions with limit $(G,o,T)$. Assume that $P_1,P_2,\dots$ are orthogonal projections. Then the following are equivalent
\begin{enumerate}[i)]
\item The sequence $(G_n,P_n)$ is tight.

\item The limit $T$ is an orthogonal projection with probability $1$ and \break $\nu_{(G_n,P_n)}(\{\infty\})=0$ for every $n$.
\end{enumerate} 
\end{lemma}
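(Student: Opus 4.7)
The central observation driving the proof is the identity
\[\Tr(U(G_n,P_n)) - \Tr(U(G_n,P_n)^2) = \nu_{(G_n,P_n)}(\{\infty\}),\]
valid whenever $P_n$ is an orthogonal projection: the first term equals $|V(G_n)|^{-1}\|P_n\|_{HS}^2 = \nu_{(G_n,P_n)}(\mathbb{N}\cup\{\infty\})$ since $P_n^2 = P_n$, while $\Tr(U(G_n,P_n)^2) = \nu_{(G_n,P_n)}(\mathbb{N})$ via the restriction structure of $U(G_n,P_n)$ (because $(P_n)_o(o,k)$ only sees the connected component of $o$). I will also use the truncation
\[\phi_R(G,o,T) = \sum_{k,k'\in K}\sum_{v' \in B_R(G,o)} |\langle T(o,k),(v',k')\rangle|^2,\]
which is bounded and continuous on $\mathcal{\wpc}$, satisfies $\mathbb{E}_{U(G_n,P_n)}\phi_R = \sum_{t \le R}\nu_{(G_n,P_n)}(\{t\})$, and obeys $\sup_R \phi_R(G,o,T) = \sum_k \|T(o,k)\|^2$. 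Together these ingredients bridge the combinatorial condition on $\nu$ and the spectral condition on $T$.

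For (i) $\Rightarrow$ (ii), tightness immediately forces $\nu_{(G_n,P_n)}(\{\infty\}) < \varepsilon$ for every $n$ and every $\varepsilon > 0$, giving $\nu_{(G_n,P_n)}(\{\infty\}) = 0$. Tightness combined with Benjamini-Schramm convergence yields $\Tr(U(G_n,P_n)^2) \to \Tr(G,o,T^2)$ by a standard $\varepsilon$-argument via $\phi_R$, and continuity of $\Tr$ gives $\Tr(U(G_n,P_n)) \to \Tr(G,o,T)$. The identity then forces $\Tr(G,o,T) = \Tr(G,o,T^2)$, which as a moment identity reads $\int(x - x^2)\,d\mu_{(G,o,T)} = 0$; since $\mu_{(G,o,T)}$ lives on $[0,1]$ it must be supported on $\{0,1\}$. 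Consequently $\Tr(G,o,(T-T^2)^2) = \int x^2(1-x)^2\,d\mu_{(G,o,T)} = 0$, which unpacks to $\mathbb{E}\sum_k \|(T-T^2)(o,k)\|^2 = 0$: almost surely $(T-T^2)(o,k) = 0$ for every $k$. To promote this from the root to every vertex I invoke the mass-transport principle for unimodular random rooted graph-operators (available since $(G,o,T)$ is a Benjamini-Schramm limit of finite objects): transporting the indicator that ``$v$ is bad'' (meaning $(T-T^2)(v,k) \neq 0$ for some $k$) shows the expected number of bad vertices equals the expected number of bad roots, which is zero. Therefore $T$ is an orthogonal projection with probability $1$.

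For (ii) $\Rightarrow$ (i), the hypothesis $T = T^2$ a.s.\ gives $\Tr(G,o,T) = \Tr(G,o,T^2)$, and $\nu_{(G_n,P_n)}(\{\infty\}) = 0$ combined with the identity gives $\Tr(U(G_n,P_n)^2) = \Tr(U(G_n,P_n))$ for every $n$. By continuity of $\Tr$ and lower semicontinuity of $(G,o,T)\mapsto \Tr(G,o,T^2) = \sup_R\mathbb{E}\phi_R$, we conclude $\Tr(U(G_n,P_n)^2) \to \Tr(G,o,T^2)$ (the lower bound follows from $\mathbb{E}_n\phi_R \le \Tr(U(G_n,P_n)^2)$; the upper bound from $\Tr(U(G_n,P_n)^2) = \Tr(U(G_n,P_n)) \to \Tr(G,o,T) = \Tr(G,o,T^2)$). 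Then $\sum_{t>R}\nu_{(G_n,P_n)}(\{t\}) = \Tr(U(G_n,P_n)^2) - \mathbb{E}_{U(G_n,P_n)}\phi_R$ converges, for each fixed $R$, to $\Tr(G,o,T^2) - \mathbb{E}\phi_R$, which is arbitrarily small for $R$ large. A two-step argument (large $n$ handled by this convergence; the finitely many remaining $n$ handled individually using $\nu_{(G_n,P_n)}(\{\infty\}) = 0$) delivers the uniform tail bound defining tightness.

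The main obstacle is the mass-transport step in (i) $\Rightarrow$ (ii), since it requires a form of unimodularity for BS limits that covers bounded operators on $\ell^2(V(G) \times K)$; this is a routine extension of the classical mass-transport principle for random rooted graphs, but it must be set up carefully in the graph-operator category, presumably invoking machinery the paper develops anyway.
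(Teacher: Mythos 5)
Your proof is correct and follows essentially the same route as the paper: both hinge on comparing $\Tr(G,o,T)$ with $\Tr(G,o,T^2)$ via truncated sums over $R$-balls (your $\phi_R$), promote the resulting equality from the root to all vertices by unimodularity (the cited ``Everything Shows at the Root'' lemma), and in the converse direction handle the finitely many small-$n$ graphs separately using $\nu_{(G_n,P_n)}(\{\infty\})=0$ and the finite support of $\nu_{(G_n,P_n)}$. The only cosmetic difference is that you deduce ``$T$ is a projection'' by passing through the spectral measure and $\Tr(G,o,(T-T^2)^2)=0$, whereas the paper uses the pointwise inequality $\langle T^2e,e\rangle\le\langle Te,e\rangle$ for positive contractions together with equality of traces; both reduce to the same mass-transport step.
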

\begin{proof}
i)$\Rightarrow$ ii):  Recall the following well-known result. 
\begin{prop}\label{ptight}
Let $E$ be a countable set, and let $T$ be a positive contraction on $\ell^2(E)$. Then for all $e\in E$ we have $\langle T^2 e,e \rangle\le \langle T e,e \rangle$. Moreover, if for all $e\in E$ we have $\langle T^2 e,e \rangle= \langle T e,e \rangle$, then $T$ is an orthogonal projection.
\end{prop}

Let $(H_n,o_n,T_n)=U(G_n,P_n)$. Then
\begin{align*}
\nu_{(G_n,P_n)}(\mathbb{N}\cup\{\infty\})=|V(G_n)|^{-1} \Tr(P_n^* P_n)=|V(G_n)|^{-1} \Tr( P_n)=\Tr(H_n,o_n,T_n).
\end{align*}
 Combining this with the definition of tightness we get that for any $\varepsilon>0$ we have an $R$ such that 
\begin{equation} \label{ineqtight}
\mathbb{E} \sum_{k\in K}\sum_{(v,k')\in B_R(H_n,o_n)\times K } |\langle T_n (o_n,k),(v,k')\rangle|^2>\Tr(H_n,o_n,T_n)-\varepsilon
\end{equation}
for every $n$. 

Using the convergence of $(H_n,o_n,T_n)$ we get that 
\[\lim_{n\to\infty} \Tr(H_n,o_n,T_n)=\Tr(G,o,T),\]
and 
\begin{multline*}
\lim_{n\to\infty} \mathbb{E} \sum_{k\in K}  \sum_{(v,k')\in B_R(H_n,o_n) \times K}|\langle T_n (o_n,k),(v,k')\rangle|^2\\=\mathbb{E}\sum_{k\in K} \sum_{(v,k')\in B_R(G,o) \times K} |\langle T (o,k),(v,k')\rangle|^2.
\end{multline*}
Combining these with inequality (\ref{ineqtight}) we get that 
\begin{align*}
\Tr(G,o,T^2)&=\mathbb{E} \sum_{k\in K}\sum_{(v,k')\in V(G)\times K} |\langle T (o,k), (v,k')\rangle|^2\\&\ge 
\mathbb{E} \sum_{k\in K}\sum_{(v,k')\in B_R(G,o)\times K } |\langle T (o,k),(v,k')\rangle|^2 \ge \Tr(G,o,T)-\varepsilon.
\end{align*}
Tending to $0$ with $\varepsilon$ we get that
\[\Tr(G,o,T^2)\ge \Tr(G,o,T).\]
Combining this with the first statement of Proposition \ref{ptight} we get that with probability $1$ we have $\langle T^2(o,k),(o,k)\rangle=\langle T(o,k),(o,k)\rangle$ for every $k\in K$. But then it follows from the unimodularity of $(G,o,T)$ that with probability $1$ we have $\langle T^2(v,k),(v,k)\rangle=\langle T(v,k),(v,k)\rangle$ for any $(v,k)\in V(G)\times K$. See \cite[Lemma 2.3 (Everything Shows at the Root)]{ally} and Section~\ref{sec3}. Then Proposition \ref{ptight} gives us that $T$ is a projection with probability $1$. From the definition of tightness it is clear that $\nu_n(\{\infty\})=0$ for every $n$.

ii)$\Rightarrow$ i): Pick any $\varepsilon>0$. From the monotone convergence theorem and the fact that $T$ is a projection with probability $1$ we have

\begin{align*}
\Tr(G,o,T)&=\Tr(G,o,T^2)=\mathbb{E}\sum_{k\in K} \sum_{(v,k')\in V(G)\times K}  |\langle T (o,k), (v,k')\rangle|^2\\&=
\lim_{R\to\infty} \mathbb{E} \sum_{k\in K}\sum_{(v,k')\in B_R(G,o)\times K } |\langle T (o,k),(v,k')\rangle|^2 
\end{align*}

Thus, if we choose a large enough $R_0$, then we have
\[\Tr(G,o,T)-\mathbb{E} \sum_{k\in K}\sum_{(v,k')\in B_{R_0}(G,o)\times K } |\langle T (o,k),(v,k')\rangle|^2 < \frac{\varepsilon}{2}.\]
Then from the convergence of $(H_n,o_n,T_n)$ we get that there is an $N$ such that if $n>N$ we have
\begin{multline*}
\nu_{(G_n,P_n)}(\{R_0+1,R_0+2,\dots\}\cup\{\infty\})=\\ \Tr(H_n,o_n,T_n)-\mathbb{E} \sum_{k\in K}\sum_{(v,k')\in B_{R_0}(H_n,o_n)\times K } |\langle T_n (o_n,k),(v,k')\rangle|^2 <\varepsilon.
\end{multline*}
Using the condition that $\nu_{(G_n,P_n)}(\{\infty\})=0$ for all $n$ and the definition of $\nu_{(G_n,P_n)}$ we get that the support of the  measure $\nu_{(G_n,P_n)}$ is contained in $\{0,1,\dots,|V(G_n)|\}$. 
Thus, the choice \break $R=\max(R_0,|V(G_1)|,|V(G_2)|,\dots,|V(G_N)|)$ is good for  $\varepsilon$.
\end{proof}

\subsection{Sofic entropy}

\if\sver 1
 Let $C$ be a finite set and let $\Gamma$ be a finitely generated group. Let $f$ be a random coloring of $\Gamma$ with $C$, that is a random element of $C^{\Gamma}$. (The measurable structure of $C^{\Gamma}$ comes from the product topology on $C^{\Gamma}$.) Given a coloring $f\in C^{\Gamma}$ and $\gamma\in \Gamma$ we define the coloring $f_\gamma$ by $f_\gamma(g)=f(\gamma^{-1}g)$ for all $g\in \Gamma$. This notation extends to random colorings in the obvious way. A random coloring $f$ is invariant if for every $\gamma\in \Gamma$ the distribution of $f_\gamma$ is the same as the distribution of $f$.

Now assume that $\Gamma$ is a finitely generated sofic group, and $f$ is an invariant random coloring of $\Gamma$. Let $S$ be a finite generating set, and  let $G_1,G_2,\dots$ be a sequence of $S$-labeled Schreier-graphs Benjamini-Schramm converging to the Cayley-graph $G_\Gamma=\text{Cay}(\Gamma,S)$. Now we define the so called \emph{sofic entropy} of $f$. There are many slightly different versions of this notion \cite{sofic1,sofic2}, we will follow Ab\'{e}rt and Weiss \cite{abwe}. Let $G$ be  a finite $S$-labeled Schreier graph and $g$ be a random coloring  of $V(G)$. Given  $\varepsilon>0$ and a positive integer $r$,   we say that $g$ is an $(\varepsilon,r)$ approximation of $f$ on the graph $G$, if there are at least $(1-\varepsilon)|V(G)|$ vertices $v\in V(G)$, such that $B_r(G,v)$ is isomorphic to $B_r(G_\Gamma,e_\Gamma)$, moreover  $d_{TV}(f\restriction B_r(G_\Gamma,e_\Gamma),g\restriction B_r(G,v))<\varepsilon$, where $d_{TV}$ is the total variational distance, and it is meant that we identify $B_r(G_\Gamma,e_\Gamma)$ and $B_r(G,v)$. Let us define 
 \[H(G,\varepsilon,r)=\sup\left\{\frac{H(g)}{|V(G)|}\quad \Big|\quad g\text{ is an $(\varepsilon,r)$ approximation of $f$ on $G$}\right\}.\] Here $H(g)$ is the Shannon-entropy of $g$. Let $H(\varepsilon,r)$ be the supremum of $H(G,\varepsilon,r)$, over all finite $S$-labeled Schreier graphs $G$. We define two versions of sofic entropy. The first one    
 \[h(f)=\inf_{\varepsilon,r}\limsup_{n\to\infty} H(G_n,\varepsilon,r).\]
 Note that this might depend on the chosen sofic approximation. Another option is to define sofic entropy as
 \[h'(f)=\inf_{\varepsilon,r} H(\varepsilon,r).\] 
 Observe that $h'(f)\ge h(f)$. It is open whether $h'(f)= h(f)$ for any sofic approximation apart from trivial counterexamples. We can also express these quantities as
 \[h(f)=\inf_{\varepsilon}\limsup_{n\to\infty} H(G_n,\varepsilon,\lfloor \varepsilon^{-1}\rfloor)\text{ and }h'(f)=\inf_{\varepsilon} H(\varepsilon,\lfloor \varepsilon^{-1}\rfloor).\]

 The quantities $h(f)$ and $h'(f)$ are isomorphism invariants in the abstract ergodic theoretic sense.

\else
 Let $Q$ be a vertex transitive connected infinite graph. For example, let $\Gamma$ be an infinite group with finite generating set $S$, and let $Q$ be the Cayley graph $\text{Cay}(\Gamma,S)$. In this case the automorphisms of $Q=\text{Cay}(\Gamma,S)$ are  required to respect the orientation and labeling of the edges, which implies $\Aut(Q)\cong \Gamma$.  Let $C$ be a finite set, and let $f$ be a random coloring of $V(Q)$ with $C$, that is a random element of $C^{V(Q)}$. (The measurable structure of $C^{V(Q)}$ comes from the product topology on $C^{V(Q)}$. ) Given a coloring $f\in C^{V(Q)}$ and $\gamma\in \Aut(Q)$ we define the coloring $f_\gamma$ by $f_\gamma(v)=f(\gamma^{-1}v)$ for all $v\in V(Q)$. This notation extends to random colorings in the obvious way. A random coloring $f$ is invariant if for every $\gamma\in \Aut(Q)$ the distribution of $f_\gamma$ is the same as the distribution of $f$.

Let $o_Q$ be any vertex of $Q$. Assume that $(Q,o_Q)$ is a Benjamini-Schramm limit of finite graphs. Note that if $Q=\text{Cay}(\Gamma,S)$, then this means that $\Gamma$ is a sofic group.  Let $f$ is an invariant random coloring of $\Gamma$.   Let $G_1,G_2,\dots$ be a sequence of finite  graphs Benjamini-Schramm converging to $(Q,o_Q)$. Now we define the so called sofic entropy of $f$. There are many slightly different versions of this notion \cite{sofic1,sofic2}, we will follow Ab\'{e}rt and Weiss \cite{abwe}. Let $G$ be  a finite  graph, $g$ be a random coloring  of $V(G)$. Given  $\varepsilon>0$ and a positive integer $r$,   we say that $g$ is an $(\varepsilon,r)$ approximation of $f$ on the graph $G$, if there are at least $(1-\varepsilon)|V(G)|$ vertices $v\in V(G)$, such that $B_r(G,v)$ is isomorphic to $B_r(Q,o_Q)$, moreover  $d_{TV}(f\restriction B_r(Q,o_Q),g\restriction B_r(G,v))<\varepsilon$, where $d_{TV}$ is the total variational distance, and it is meant that we identify $B_r(Q,o_Q)$ and $B_r(G,v)$. Let us define 
\[H(G,\varepsilon,r)=\sup\left\{\frac{H(g)}{|V(G)|}\quad \Big|\quad g\text{ is an $(\varepsilon,r)$ approximation of $f$ on $G$}\right\}.\] Here $H(g)$ is the Shannon entropy of $g$. Let $H(\varepsilon,r)$ be the supremum of $H(G,\varepsilon,r)$, over all finite graphs $G$. We define two versions of sofic entropy. The first one    
\[h(f)=\inf_{\varepsilon,r}\limsup_{n\to\infty} H(G_n,\varepsilon,r).\]
Note, that this might depend on the chosen  approximation. An other option is to define sofic entropy as
\[h'(f)=\inf_{\varepsilon,r} H(\varepsilon,r).\] 
Note that $h'(f)\ge h(f)$. It is open whether $h'(f)= h(f)$ for any sofic approximation apart from trivial counter examples. Note that we can also express these quantities as
\[h(f)=\inf_{\varepsilon}\limsup_{n\to\infty} H(G_n,\varepsilon,\lfloor \varepsilon^{-1}\rfloor)\text{ and }h'(f)=\inf_{\varepsilon} H(\varepsilon,\lfloor \varepsilon^{-1}\rfloor).\]

If $Q=\text{Cay}(\Gamma,S)$, the quantities $h(f)$ and $h'(f)$ are isomorphism invariants in the abstract ergodic theoretic sense.
\fi

\if\sver 1
\emph{Remark.} 
Sofic entropy can be defined in a more general setting. Namely, let $Q$ be a locally finite vertex transitive graph.  Let $o$ be any vertex of it. Assume that $(Q,o)$ is a Benjamini-Schramm limit of finite graphs. Let $f$ be a random coloring of $V(Q)$ with $C$ such that the distribution of $f$ is invariant under all automorphisms of $Q$. We would like to define the sofic entropy of $f$ the same way as above. The only problematic point is that in the definition of $(\varepsilon,r)$-approximation we need to identify $B_r(G,v)$ with $B_r(Q,o)$. But $B_r(Q,o)$ might have non-trivial automorphisms, in which case there are more than one possible identifications and it is not clear which  we should choose. If all the automorphisms $B_r(Q,o)$ can be extended to an automorphism of $Q$, then we can choose any identification, because they all give the same total variation distance. But if $B_r(Q,o)$ has other automorphisms then things get more complicated. However, one can overcome these difficulties and get a sensible notion of sofic entropy \cite{abwe}.  Here we do not give the details, we just mention that Theorem \ref{SoficThm} stated in the next subsection can be extended to this more general setting. 


\fi
 
\subsection{Our main theorems}\label{subsecmain}

Let $E$ be a countable set, and $T$ be a positive contraction on  $\ell^2(E)$. Let $c$ be a $[0,1]$ labeling of $E$. For $e\in E$ let $I(e)$ be the indicator of the event that $e\in B^T$.   
 For $e\in E$ we define 
\[\bar{h}(e,c,T)=H(I(e)|\{I(f)|c(f)<c(e)\}).\]
Here, $H$ is the conditional entropy, that is, with the notation \[g(x)=-x\log x-(1-x)\log(1-x),\] we have
\[H(I(e)|\{I(f)|c(f)<c(e)\})=\mathbb{E}g(\mathbb{E}[I(e)|\{I(f)|c(f)<c(e)\}]).\]
Moreover, we define
\[\bar{h}(e,T)=\mathbb{E}\bar{h}(e,c,T),\]
where $c$ is an i.i.d. uniform $[0,1]$ labeling of $E$. 

For a random \wpc\ $(G,o,T)$  we define
\[\bar{h}(G,o,T)=\mathbb{E}\sum_{k\in K} \bar{h}((o,k),T).\]
 



If $L\subset K$ and $(G,T)$ is a finite graph-positive-contraction we define  $h_L(G,T)$ to be the Shannon entropy of $B^T\cap(V(G)\times L)$. 

\begin{theorem}\label{MainThm}
Let $(G_n,P_n)$ be a sequence of finite graph-positive-contractions, such that  
$\lim_{n\to\infty} U(G_n,P_n)=(G,o,P)$ for some random \wpc\ $(G,o,P)$. Assume that $P_1,P_2,\dots$ are orthogonal projections, and $P$ is an orthogonal projection with probability $1$. Let $L\subset K$. Then 
\[\lim_{n\to\infty} \frac{h_L(G_n,P_n)}{|V(G_n)|}=\bar{h}(G,o,\rest_L(P)).\] 
\end{theorem}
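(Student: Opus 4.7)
The plan is to follow the hint in the introduction: use the chain rule for Shannon entropy to decompose $h_L(G_n,P_n)$ into an average of local conditional entropies, show that each such conditional entropy can be approximated by a quantity depending only on a bounded neighborhood, and then pass to the limit using the Benjamini-Schramm convergence of $U(G_n,P_n)$.

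First I would use that $B^{P_n}\cap(V(G_n)\times L)$ has the same distribution as $B^{\rest_L(P_n)}$, so $h_L(G_n,P_n)=H(B^{\rest_L(P_n)})$. Fixing an i.i.d.\ uniform $[0,1]$ labeling $c$ of $V(G_n)\times L$ and applying the chain rule gives
\[
H\!\left(B^{\rest_L(P_n)}\right)=\sum_{e\in V(G_n)\times L} H\!\left(I(e)\,\middle|\,\{I(f):c(f)<c(e)\}\right)
\]
for each realization of $c$. Averaging over $c$ and grouping by vertex,
\[
\frac{h_L(G_n,P_n)}{|V(G_n)|}=\mathbb{E}_v\sum_{k\in L}\bar{h}\!\left((v,k),\rest_L(P_n)\right),
\]
where $v$ is uniform in $V(G_n)$. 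The right-hand side is exactly the expectation of $\sum_{k\in L}\bar{h}((o,k),\rest_L(T))$ under the distribution $U(G_n,P_n)$ on random \wpc s. So the theorem reduces to showing that the functional $(G,o,T)\mapsto\sum_{k\in L}\bar{h}((o,k),\rest_L(T))$ is continuous under Benjamini-Schramm (weak*) convergence, at any random \wpc\ for which $T$ is almost surely an orthogonal projection.

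The heart of the argument, and the main obstacle, is this continuity: $\bar{h}$ is defined through a global conditional expectation, whereas Benjamini-Schramm convergence only sees bounded neighborhoods. I would introduce a truncated version $\bar{h}_r((o,k),c,\rest_L(T))$ built from the operator restricted to $B_r(G,o)\times K$ together with the indicators $I(f)$ for $f$ in $B_r(G,o)\times L$ with $c(f)<c((o,k))$, and prove $\bar{h}_r\to\bar{h}$ uniformly as $r\to\infty$ across the class of random \wpc s with $T$ a projection almost surely. For orthogonal projection kernels, conditioning on a finite set of indicator values produces another determinantal process whose kernel is an explicit Schur-complement-type modification of $P$. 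The decay needed to control the truncation error is furnished by the identity $\sum_f|P_n(e,f)|^2=P_n(e,e)$ combined with the uniform tightness of $|P_n(e,f)|^2$ in $d(e,f)$ guaranteed by Lemma~\ref{tightl}. For $\rest_L(P)$, which is only a positive contraction, the analogous decay is inherited from the decay for the full projection $P$ via $\langle\rest_L(P)\chi_e,\chi_f\rangle=\langle P\chi_e,\chi_f\rangle$ for $e,f\in V(G)\times L$.

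With uniform local approximation in hand the proof concludes by standard weak* arguments. The truncated functional $\bar{h}_r$ is a bounded continuous function of a suitably decorated random \wpc\ (where the decoration records the $[0,1]$-labeling and the realization of the past indicators inside $B_r$), so the Benjamini-Schramm conditioning machinery of Section~\ref{sec3} gives that $\mathbb{E}\bar{h}_r$ under $U(G_n,P_n)$ converges to its limiting value; letting $r\to\infty$ and using the uniform approximation then yields the theorem. The step I expect to be most delicate is quantifying the truncation error uniformly in $n$, because successive Schur-complement-type kernel updates along the chain-rule ordering can interact in subtle ways; this is precisely where the projection hypothesis on $P_n$ and $P$ enters essentially.
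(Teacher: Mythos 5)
Your opening reduction is exactly the paper's: the chain rule over a uniform random ordering gives the exact identity $h_L(G_n,P_n)/|V(G_n)| = \mathbb{E}\sum_{k\in L}\bar h((o,k),\rest_L(P_n))$ under $U(G_n,P_n)$, so everything hinges on interchanging this expectation with the limit $n\to\infty$. The gap is in how you propose to do that. Your plan rests on proving that the truncated quantities $\bar h_r$ converge to $\bar h$ \emph{uniformly} over the relevant class, with the truncation error controlled by the $\ell^2$ off-diagonal decay $\sum_f|\langle P_n e,f\rangle|^2=\langle P_ne,e\rangle$ plus tightness. Two problems. First, Theorem \ref{MainThm} does not assume tightness, and Lemma \ref{tightl} does not deliver it from the stated hypotheses (its direction ii)$\Rightarrow$i) also needs $\nu_{(G_n,P_n)}(\{\infty\})=0$); at best one gets the tail bound for all sufficiently large $n$. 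Second, and more seriously, $\ell^2$ decay of the kernel does not in any obvious way control $\bar h_r-\bar h$, which is a conditional mutual information between $I(e)$ and the unboundedly many indicators outside $B_r$ given those inside: conditioning a determinantal projection process propagates information nonlocally, and the conditioned kernel $P_{/C-D}$ need not inherit the decay of $P$. You flag this yourself as ``the most delicate step,'' but it is not a technical detail to be quantified later --- it is essentially the entire content of the theorem, and no argument is offered for it.

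The paper avoids spatial truncation altogether. It keeps the full conditioning exact, writing the time-$t$ slice of the chain-rule sum as $\mathbb{E}\, g(\langle P_{/(E_t\cap B^P)-(E_t\setminus B^P)}e,e\rangle)$, where $E_t$ becomes a Bernoulli$(t)$ percolation in the limit (via Lemma \ref{fincond}), and then proves that the conditioning operation $(P,C,D)\mapsto P_{/C-D}$ is continuous along unimodular sequences (Theorem \ref{condlimit}). The proof of that continuity is the real engine and has no counterpart in your outline: soft compactness and monotonicity only give the one-sided inequality $\langle Qe,e\rangle\le\langle P_{/C}e,e\rangle$ for a subsequential limit $Q$, and the reverse inequality comes from conservation of the normalized trace under conditioning (Lemma \ref{lemmaTr}, proved by a unimodular rank--nullity argument) together with ``everything shows at the root.'' A further ingredient you would also need, and do not mention, is Lemma \ref{lemmakond}: that in the infinite-volume unimodular setting, conditioning $B^P$ on $B^P\restriction F$ really does yield the determinantal process with kernel $P_{/B^P\cap F-F\setminus B^P}$ --- a statement that fails for general non-unimodular projections (cf.\ the remark on Lyons's Conjecture 9.1). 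Without these inputs the limit of the finite-volume conditional entropies cannot be identified with $\bar h(G,o,\rest_L(P))$.
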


Using Lemma \ref{tightl} we immediately get the following theorem. 

\begin{theorem}\label{MainThm_t}
Let $(G_n,P_n)$ be a \textbf{tight} sequence of finite graph-positive-contractions, such that  
$\lim_{n\to\infty} U(G_n,P_n)=(G,o,P)$ for some random \wpc\ $(G,o,P)$. Assume that $P_1,P_2,\dots$ are orthogonal projections. Let $L\subset K$. Then 
\[\lim_{n\to\infty} \frac{h_L(G_n,P_n)}{|V(G_n)|}=\bar{h}(G,o,\rest_L(P)).\] 
\end{theorem}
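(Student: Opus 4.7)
The plan is to observe that Theorem \ref{MainThm_t} differs from Theorem \ref{MainThm} only in that the hypothesis ``$P$ is an orthogonal projection with probability $1$'' has been replaced by the hypothesis that the sequence $(G_n,P_n)$ is tight, everything else being identical. So the entire task reduces to showing that tightness (together with each $P_n$ being an orthogonal projection and the Benjamini--Schramm convergence $U(G_n,P_n)\to (G,o,P)$) forces the limit $P$ to be an orthogonal projection almost surely. Once this is established, an immediate appeal to Theorem \ref{MainThm} delivers the desired formula.

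For the reduction itself, I would invoke the implication i)$\Rightarrow$ii) of Lemma \ref{tightl}. The lemma is set up precisely so that, under Benjamini--Schramm convergence of a sequence of finite graph-positive-contractions whose operators are orthogonal projections, tightness is equivalent to the conjunction of ``$T$ is an orthogonal projection with probability $1$'' and ``$\nu_{(G_n,P_n)}(\{\infty\})=0$ for every $n$.'' The first of these two conclusions is exactly the missing hypothesis required to apply Theorem \ref{MainThm}; the second is automatic and not needed here.

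Having verified that $P$ is a.s.\ an orthogonal projection, I would then apply Theorem \ref{MainThm} directly with the same data $(G_n,P_n)$, the same limit $(G,o,P)$, and the same subset $L\subset K$. This yields
\[
\lim_{n\to\infty}\frac{h_L(G_n,P_n)}{|V(G_n)|}=\bar{h}(G,o,\rest_L(P)),
\]
which is precisely the claim of Theorem \ref{MainThm_t}. The main (indeed only) obstacle is the self-bookkeeping to ensure Lemma \ref{tightl} is genuinely applicable: one must check that the hypothesis ``Benjamini--Schramm convergent sequence of finite graph-positive-contractions with limit $(G,o,T)$'' in Lemma \ref{tightl} matches the standing hypotheses of Theorem \ref{MainThm_t}. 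This is transparent from the statements, so the deduction really is immediate, as asserted by the author.
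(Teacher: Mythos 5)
Your proposal is correct and coincides with the paper's own deduction: the author derives Theorem \ref{MainThm_t} from Theorem \ref{MainThm} by exactly this application of the implication i)$\Rightarrow$ii) of Lemma \ref{tightl}, which upgrades tightness to the hypothesis that the limit $P$ is almost surely an orthogonal projection. Nothing further is needed.
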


\if \sver 0

Let $Q$ be a vertex transitive connected infinite graph which is a Benjamini-Schramm limit of finite graphs. A positive contraction $T$ on $\ell^2(V(Q)\times K)$ is called invariant, if for any $\gamma\in\Aut(Q)$ and $v_1,v_2\in \Gamma$ and $k_1,k_2\in K$ we have
\[\langle T(v_1,k_1),(v_2,k_2)\rangle=\langle T(\gamma^{-1}v_1,k_1),(\gamma^{-1}v_2,k_2)\rangle.\]

For an invariant positive contraction, if we regard the random subset $B^T$ as a random coloring with $\{0,1\}^K$, we see that $B^T$ is an invariant coloring. Thus we can speak about its sofic entropy. 

%


%

\begin{theorem}\label{SoficThm}
Let $Q$ be a vertex transitive connected infinite graph which is a Benjamini-Schramm limit of finite graphs. If $T$ is an invariant positive contraction on $\ell^2(V(Q)\times K)$ then we have
\[h(B^T)=h'(B^T)= \bar{h}(Q,o_Q,T)\]
for any sofic approximation of $Q$.
\end{theorem}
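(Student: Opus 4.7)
The plan is to prove the two inequalities $h(B^T)\ge \bar{h}(Q,o_Q,T)$ and $h'(B^T)\le \bar{h}(Q,o_Q,T)$; combined with the obvious $h(B^T)\le h'(B^T)$ they yield the claimed equalities.

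For the lower bound I would exhibit a concrete near-optimal sequence of $(\varepsilon,r)$-approximations. For each $n$, build a positive contraction $T_n$ on $\ell^2(V(G_n)\times K)$ so that $(G_n,T_n)$ Benjamini--Schramm converges to $(Q,o_Q,T)$. Concretely, first approximate $T$ in operator norm by a self-adjoint polynomial $p(T')$ in a finite-range invariant self-adjoint operator $T'$ on $\ell^2(V(Q)\times K)$; then, since for any $R$ the $R$-neighborhood of $o_Q$ in $Q$ appears around $(1-o(1))|V(G_n)|$ vertices of $G_n$, transfer $T'$ entry-wise to $\ell^2(V(G_n)\times K)$ and apply $p$ to obtain $T_n$. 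By Proposition~\ref{prop1} the random sets $B^{T_n}$ are $(\varepsilon,r)$-approximations of $B^T$ on $G_n$ for $n$ large, and the positive-contraction generalization of Theorem~\ref{MainThm} (developed in Section~\ref{sec5}) gives
\[
\lim_{n\to\infty}\frac{H(B^{T_n})}{|V(G_n)|}=\bar{h}(Q,o_Q,T),
\]
which forces $h(B^T)\ge \bar{h}(Q,o_Q,T)$.

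For the upper bound let $g$ be any $(\varepsilon,r)$-approximation of $B^T$ on any finite graph $G$. Pick a uniform random labeling $c:V(G)\times K\to[0,1]$ and expand
\[
H(g)=\mathbb{E}_c\sum_{(v,k)\in V(G)\times K}H\bigl(g(v,k)\,\big|\,\{g(u,k'):c(u,k')<c(v,k)\}\bigr)
\]
by the chain rule. Since conditioning on extra information only decreases entropy, each summand is bounded above by
\[
H\bigl(g(v,k)\,\big|\,\{g(u,k'):c(u,k')<c(v,k),\ u\in B_r(G,v)\}\bigr).
\]
By the $(\varepsilon,r)$-approximation property, for at least $(1-\varepsilon)|V(G)|$ vertices $v$ the law of $g|_{B_r(G,v)\times K}$ is within $\varepsilon$ in total variation of the law of $B^T|_{B_r(Q,o_Q)\times K}$, so the local conditional entropies differ from the corresponding quantities for $B^T$ by $o_\varepsilon(1)$. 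Averaging over $c$ and letting $\varepsilon\to 0$ (which forces $r=\lfloor\varepsilon^{-1}\rfloor\to\infty$, whereupon martingale convergence makes the $B_r$-conditional entropy tend to $\bar{h}((o_Q,k),T)$) yields
\[
\limsup\frac{H(g)}{|V(G)|}\le \mathbb{E}\sum_{k\in K}\bar{h}((o_Q,k),T)=\bar{h}(Q,o_Q,T),
\]
which gives $h'(B^T)\le \bar{h}(Q,o_Q,T)$.

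The main obstacle I anticipate is producing the approximating positive contractions $T_n$ in the lower bound. A naive entry-by-entry transfer of $T$ to $G_n$ destroys positive semidefiniteness, so the argument must proceed through spectral calculus, first approximating $T$ by polynomials in a finite-range self-adjoint operator; this is essentially the device that Section~\ref{sec5} introduces in order to extend Theorem~\ref{MainThm} from projections to general positive contractions. A secondary technicality is the use of Doob's martingale convergence theorem, combined with continuity of the binary entropy function $g(x)=-x\log x-(1-x)\log(1-x)$, to justify that the $B_r$-conditional entropy converges as $r\to\infty$ to $\bar{h}((o_Q,k),T)$ uniformly enough to be exchanged with the $n\to\infty$ limit.
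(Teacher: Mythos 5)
Your upper bound $h'(B^T)\le\bar h(Q,o_Q,T)$ follows the paper's own argument (chain rule over a uniform random ordering, monotonicity of conditional entropy to localize to $r$-balls, the $(\varepsilon,r)$-approximation property, then $r\to\infty$) and is fine.

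The lower bound, however, has a genuine gap. You build $T_n$ on $\ell^2(V(G_n)\times K)$ by transferring a finite-range operator and applying a polynomial, and then invoke the positive-contraction extension of Theorem \ref{MainThm} to conclude $H(B^{T_n})/|V(G_n)|\to\bar h(Q,o_Q,T)$. But that extension (Section \ref{sec5}) requires \emph{p-tight} convergence: one must exhibit orthogonal-projection $K_0$-extensions $(G_n,P_n)$ of $(G_n,T_n)$ whose rooted versions converge to a projection extension of $(Q,o_Q,T)$. Benjamini--Schramm convergence of $(G_n,T_n)$ alone does not suffice --- the paper's example on why tightness is necessary shows that the normalized entropy is not determined by the local limit --- and p-tightness is not automatic for your $T_n$: the standard $K_0$-extension involves $q(T_n)=\sqrt{T_n(I-T_n)}$, whose matrix entries are not local functions of the entries of $T_n$, so local convergence of the $T_n$ does not yield local convergence of their extensions; what is missing is convergence of the spectral measures, which can fail under mere local convergence because mass can escape to infinity. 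A second unaddressed point is that you first replace $T$ by $p(T')$ in operator norm, but neither $\bar h(Q,o_Q,\cdot)$ nor the normalized finite entropies are shown anywhere to be stable under small operator-norm perturbations, so the error from that approximation step is uncontrolled. The paper takes a different route that supplies exactly the missing ingredient: it passes to the standard projection extension $P$ of $T$ on a doubled index set $K_0$, uses the Lyons--Thom results to produce positive contractions $R_n$ with $U(G_n,R_n)\to(Q,o_Q,P)$ \emph{together with} weak convergence of the spectral measures to $|K|(\delta_0+\delta_1)$, rounds $R_n$ to genuine projections $P_n$ (the rounding error vanishes precisely because the limiting spectral measure is supported on $\{0,1\}$), and only then applies the projection version of Theorem \ref{MainThm}, restricting back to $K$ at the end. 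To repair your construction you would need to establish the spectral-measure control that Lyons--Thom provides.
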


\else

Let $\Gamma$ be a finitely generated sofic group. A positive contraction $T$ on $\ell^2(\Gamma\times K)$ is called invariant, if for any $\gamma,g_1,g_2\in \Gamma$  and $k_1,k_2\in K$ we have
\[\langle T(g_1,g_1),(g_2,k_2)\rangle=\langle T(\gamma^{-1}g_1,k_1),(\gamma^{-1}g_2,k_2)\rangle.\]

For an invariant positive contraction if we regard the random subset $B^T$ as a random coloring with $\{0,1\}^K$, we see that $B^T$ is an invariant coloring. Thus we can speak about its sofic entropy. 

%


%
As before let $S$ be a finite generating set of $\Gamma$, let $e_\Gamma$ be the identity  of $\Gamma$, and  $G_\Gamma=\text{Cay}(\Gamma,S)$ be the Cayley-graph of $\Gamma$.

\begin{theorem}\label{SoficThm}
Let $\Gamma$ be a finitely generated sofic group. If $T$ is an invariant positive contraction
on $\ell^2(\Gamma\times K)$ then we have
\[h(B^T)=h'(B^T)= \bar{h}(G_\Gamma,e_\Gamma,T)\]
for any sofic approximation of $\Gamma$.
\end{theorem}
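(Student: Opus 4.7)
The strategy is to sandwich the sofic entropy between two bounds which both coincide with $\bar h(G_\Gamma, e_\Gamma, T)$, forcing equality throughout the chain $\bar h \leq h(B^T) \leq h'(B^T) \leq \bar h$.

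\emph{Upper bound: $h'(B^T) \leq \bar h(G_\Gamma, e_\Gamma, T)$.} Fix any finite $S$-labeled Schreier graph $G$ and any $(\varepsilon, r)$-approximation $g$ of $B^T$. Draw a uniformly random ordering $c$ of $V(G) \times K$ and apply the chain rule for entropy:
\begin{equation*}
H(g) = \mathbb{E}_c \sum_{(v,k)} H\bigl(g(v,k) \mid \{g(v',k'): c(v',k') < c(v,k)\}\bigr).
\end{equation*}
Since conditioning on fewer variables can only increase entropy, bound each summand from above by the corresponding entropy conditioned only on those $(v',k')$ lying in $B_r(G,v) \times K$. For vertices $v$ whose $r$-ball is isomorphic to $B_r(G_\Gamma, e_\Gamma)$ and whose local distribution of $g$ is $\varepsilon$-close in total variation to $B^T\restriction B_r(G_\Gamma, e_\Gamma)$, continuity of the entropy function forces this local conditional entropy to be within $o_{\varepsilon \to 0}(1)$ of the radius-$r$ truncation of $\bar h((e_\Gamma, k), c, T)$, which in turn decreases monotonically to $\bar h((e_\Gamma, k), c, T)$ as $r \to \infty$. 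The remaining $\leq \varepsilon|V(G)|$ exceptional vertices contribute at most $\varepsilon|V(G)||K|\log 2$. Averaging over $c$ gives $H(g)/|V(G)| \leq \bar h(G_\Gamma, e_\Gamma, T) + o_{\varepsilon\to 0,\, r\to\infty}(1)$ uniformly in $G$, yielding $h'(B^T) \leq \bar h(G_\Gamma, e_\Gamma, T)$.

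\emph{Lower bound: $h(B^T) \geq \bar h(G_\Gamma, e_\Gamma, T)$.} Approximate $T$ in the strong operator topology by a finite-range invariant positive contraction $\tilde T$ on $\ell^2(\Gamma \times K)$. Concretely, choose a self-adjoint non-commutative polynomial $q$ in the left regular representatives $L_s$, $s \in S$, tensored with a matrix in $M_K(\mathbb{R})$, that approximates $T^{1/2}$ strongly, and set $\tilde T := q^2/\|q\|^2$ so that $\tilde T$ is a finite-range positive contraction. For each Schreier approximation $G_n$, substitute the permutation operators $\pi_s^{(n)}$ attached to $G_n$ into $q$ and form $\tilde T_n$ analogously. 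On the $(1 - o(1))$-fraction of vertices $v \in V(G_n)$ whose neighborhood matches that of $G_\Gamma$ up to the depth of $q$, $\tilde T_n$ reproduces $\tilde T$ exactly; hence $(G_n, \tilde T_n)$ Benjamini-Schramm converges to the deterministic \wpc\ $(G_\Gamma, e_\Gamma, \tilde T)$ and is tight (because $\tilde T$ has finite range). Theorem~\ref{MainThm_t} applied with $L = K$ then gives
\begin{equation*}
\lim_{n \to \infty} \frac{H(B^{\tilde T_n})}{|V(G_n)|} = \bar h(G_\Gamma, e_\Gamma, \tilde T).
\end{equation*}
By Proposition~\ref{prop1}, the local distribution of $B^{\tilde T_n}$ on $r$-balls converges to that of $B^{\tilde T}$, and $B^{\tilde T}$ is in turn close to $B^T$ on such windows when $\tilde T$ is close to $T$ in SOT, so $B^{\tilde T_n}$ is an $(\varepsilon, r)$-approximation of $B^T$ for large $n$. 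Sending $\tilde T \to T$ and using continuity of $\bar h$ (again via Proposition~\ref{prop1}) gives $h(B^T) \geq \bar h(G_\Gamma, e_\Gamma, T)$.

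\emph{Main obstacle.} The main technical challenge is manufacturing the finite-range approximation $\tilde T$ as a genuine invariant positive contraction; naive truncation of the matrix entries of $T$ destroys positivity, and the polynomial route via $T^{1/2}$ requires a careful choice of approximating polynomial that is simultaneously strongly close to $T^{1/2}$ and of manageable degree. A secondary delicate point is that after substitution into Schreier permutations, $\tilde T_n$ may fail to be exactly a positive contraction globally on $V(G_n)\times K$ because of boundary defects at vertices whose neighborhood does not match $G_\Gamma$; one has to absorb the resulting $o(1)$ spectral defect either by a small correction of $\tilde T_n$ on a vanishing fraction of vertices or by noting that the entropy of $B^{\tilde T_n}$ is unchanged under such modifications in the limit. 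These are standard issues in transferring von Neumann algebraic constructions to sofic approximations.
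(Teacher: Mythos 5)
Your upper bound $h'(B^T)\le \bar h(G_\Gamma,e_\Gamma,T)$ is essentially the paper's argument (chain rule over a random ordering, monotonicity of conditional entropy, $\bar h_r\downarrow\bar h$) and is fine. The lower bound, however, has two genuine gaps.

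First, you invoke Theorem \ref{MainThm_t} for the sequence $(G_n,\tilde T_n)$, but that theorem (like Theorem \ref{MainThm}) explicitly assumes the finite operators are \emph{orthogonal projections}; your $\tilde T_n=q_n^2/\|q\|^2$ are positive contractions with spectrum spread over $[0,1]$, so the theorem does not apply. The extension to positive contractions (Section \ref{sec5}) requires p-tightness, i.e.\ the existence of tight $K_0$-extensions of the $\tilde T_n$ that are honest projections and that still Benjamini--Schramm converge; the standard extension involves $\sqrt{\tilde T_n(I-\tilde T_n)}$, which is not finite range, so tightness of the extended sequence is not automatic from the finite range of $q$. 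Producing projections on the finite graphs that converge to the right limit is precisely the hard step; the paper handles it by passing to the standard $K_0$-extension $P$ of the \emph{original} $T$ (with $|K_0|=2|K|$), invoking the Lyons--Thom results to obtain positive contractions $R_n$ on $\ell^2(V(G_n)\times K_0)$ converging to $(G_\Gamma,e_\Gamma,P)$ together with convergence of spectral measures to $|K|(\delta_0+\delta_1)$, and then rounding eigenvalues ($P_n=R_n-w(R_n)$) to get genuine projections without changing the limit. Your proposal has no substitute for this step.

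Second, your concluding limit ``sending $\tilde T\to T$ and using continuity of $\bar h$'' goes in the wrong direction. Proposition \ref{prop1} gives continuity of $(G,o,T)\mapsto(G,o,T,B^T)$, from which each truncated quantity $\bar h_r$ is continuous; but $\bar h=\inf_r\bar h_r$ is then only \emph{upper} semicontinuous, yielding $\limsup_{\tilde T\to T}\bar h(G_\Gamma,e_\Gamma,\tilde T)\le \bar h(G_\Gamma,e_\Gamma,T)$, whereas your argument needs $\liminf\ge$. Establishing lower semicontinuity of $\bar h$ along such an approximation is essentially as hard as the main theorem itself (it is exactly the kind of entropy-drop phenomenon that the tightness hypothesis is designed to rule out), so this step cannot be waved through. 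The paper sidesteps the issue entirely by never approximating $T$: Theorem \ref{MainThm} is applied once, to projections whose limit is the extension $P$ of the actual operator $T$, giving the exact value $\bar h(G_\Gamma,e_\Gamma,T)$ with no secondary limit.
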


\fi

Note that we can easily generalize the definition of $\bar{h}$ to any invariant random coloring $f$. It is known that even in this more general setting $\bar{h}$ is an upper bound on the sofic entropy. However, $\bar{h}$ is not an isomorphism invariant in the ergodic theoretic sense. See \cite{seward}. 

The random ordering idea above was used by  Borgs,  Chayes,  Kahn and Lov\'asz \cite{borgs} to give the growth of the partition function and entropy of certain Gibbs measures  at high temperature on Benjamini-Schramm convergent graph sequences. See also \cite{aumo}.   

\subsection{An example: Why tightness is necessary?}

We  consider two connected graphs $H_1$ and $H_2$. Let $H_1$ be the complete graph on $4$ vertices, and let $H_2$ be the graph that is  obtained from a star with $3$ edges by doubling each edge. Both have $4$ vertices and $6$ edges.   Let $T_i$ be a uniform random spanning tree of $H_i$, and let $P_i$ be the corresponding $6\times 6$ transfer-current matrix. It is straightforward to check that for any $e\in E(H_i)$ we have $\mathbb{P}(e\in T_i)=\frac{1}{2}$. Thus, in both $P_1$ and $P_2$ all the diagonal entries are equal to $\frac{1}{2}$. Now let $G_i$ be the empty graph on the vertex set $E(H_i)$. Then the pairs $(G_1,P_1)$ and $(G_2,P_2)$ are indistinguishable by local sampling, that is, $U(G_1,P_1)$ and $U(G_2,P_2)$ have the same distribution. On the other hand $H_1$ has $16$ spanning trees, and $H_2$ has only $8$ spanning trees. So $|V(G_1)|^{-1} H(B^{P_1})\neq |V(G_2)|^{-1} H(B^{P_2})$. This shows that the condition of tightness can not be omitted  in Theorem~\ref{MainThm_t}. One could think that this only works, because the graphs $G_1$ and $G_2$ are not connected. But Theorem \ref{MainThm_t} still fails without the assumption of tightness, even if we assume that all the graphs are connected.  We sketch the main idea. Let $i\in \{1,2\}$. For each $n$ we consider a block diagonal matrix $B_{i,n}$, where we have $n$ diagonal blocks each of which equal to $P_i$. Then we take a connected graph $G_{i,n}$ on $V_{i,n}$ (the set of columns of $B_{i,n}$) in such a way that if two columns are in the same block, then they must be at least at distance $d(n)$ in the graph $G_{i,n}$ for some $d(n)$ tending to infinity. Moreover, we can choose $G_{i,n}$ such that the sequences $(G_{1,n})$ and $(G_{2,n})$ have the same Benjamini-Schramm limit $(G,o)$. Then both of the sequences $(G_{1,n},B_{1,n})$ and $(G_{2,n},B_{2,n})$  have the same limit, namely, $(G,o,\frac{1}{2}I)$. But their asymptotic entropy is different.

\section{Unimodularity and conditional determinantal processes} \label{sec3}

\subsection{Unimodularity}

We define bi-rooted graph-operators as tuples  $(G,o,o',T)$, where $G$ is a connected graph with degree bound $D$, $o,o'\in V(G)$ and $T$ is a bounded operator on $\ell^2(V(G)\times K)$. Let $bi\mathcal{\rwo}$ be the set of isomorphism classes of bi-rooted graph-operators. We omit the details how to endow this space with a measurable structure. A random \rwo\  $(G,o,T)$ is called \emph{unimodular}, if for any non-negative measurable function $f:bi\mathcal{\rwo}\to\mathbb{R}$ we have
\[\mathbb{E}\sum_{v\in V(G)} f(G,o,v,T)=\mathbb{E}\sum_{v\in V(G)} f(G,v,o,T).\]

The next lemma gives some examples  of unimodular random \rwos. The proof goes like the one given in \cite{besch}.
\begin{lemma}
If $(G,T)$ is a  finite graph-positive-contraction, then $U(G,T)$ is unimodular. The limit of unimodular random \rwos\ is unimodular.  
\end{lemma}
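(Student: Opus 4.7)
For the first claim I would simply unfold the definition of $U(G,T)$: here $o$ is a uniform random vertex of $G$, $G_o$ is its connected component, and $T_o$ is the restriction of $T$ to that component. Then
\[\mathbb{E}\sum_{v\in V(G_o)} f(G_o,o,v,T_o)=\frac{1}{|V(G)|}\sum_{o\in V(G)}\sum_{v\in V(G_o)} f(G_o,o,v,T_o).\]
The index set $\{(o,v):o\text{ and }v\text{ lie in a common connected component}\}$ is symmetric in the two coordinates, and on any such pair the unrooted data $(G_o,T_o)=(G_v,T_v)$ does not depend on which of $o,v$ is labeled "first". Renaming the summation variables therefore converts the expression above into the right-hand side of the unimodularity identity.

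For the second statement, suppose $\mu_n$ are unimodular random \rwos\ converging weakly to $\mu$. I would first reduce, via monotone convergence, to the case where $f$ is bounded and supported on the slice $\{(G,o,o',T):d_G(o,o')\le r\}$ for some fixed $r$; the general case follows by letting $r\to\infty$ and the bound on $f$ tend to $\infty$ at the end. For such an $f$, define
\[F_1(G,o,T)=\sum_{v\in V(G)} f(G,o,v,T),\qquad F_2(G,o,T)=\sum_{v\in V(G)} f(G,v,o,T).\]
The degree bound $D$ forces each sum to have a uniformly bounded number of nonzero terms, so $F_1$ and $F_2$ are bounded functions of $(G,o,T)$. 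When $f$ is additionally continuous on $bi\mathcal{\rwo}$, the functions $F_1,F_2$ are continuous on $\mathcal{\rwo}$, and weak* convergence together with the unimodularity of the $\mu_n$ gives
\[\int F_1\,d\mu=\lim_{n}\int F_1\,d\mu_n=\lim_n\int F_2\,d\mu_n=\int F_2\,d\mu.\]
A standard monotone-class argument then lifts the identity from bounded continuous $f$ to all bounded measurable $f$ supported on the slice, completing the proof.

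The main technical obstacle is the continuity of $F_1,F_2$. When $(G_n,o_n,T_n)\to(G,o,T)$ in $\mathcal{\rwo}$, the metric only provides a root-preserving graph isomorphism $\psi_n$ between the $r$-neighborhoods, together with a small perturbation bound on the matrix entries in this ball. I would use $\psi_n$ to relabel each summation index $v$ lying in the $r$-ball, and then invoke continuity of $f$ on $bi\mathcal{\rwo}$ termwise; the uniform bound on the number of summands supplied by the degree bound then yields the joint continuity. This is the same local argument that appears in \cite{besch} to show that weak* limits of unimodular random graphs are unimodular, now carried out in the presence of an operator.
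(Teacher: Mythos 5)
Your proposal is correct and follows the standard mass-transport argument: the finite case by symmetrizing the double sum over pairs in a common component, and the limit case by truncating $f$ to a bounded function supported on a slice $d(o,o')\le r$, using the degree bound for uniform finiteness of the sums, passing to the limit for continuous $f$, and then a monotone class plus monotone convergence argument. This is exactly the route the paper takes --- it simply defers to the proof in \cite{besch}, which is the argument you have written out.
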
 

Of course the notion of unimodularity can be extended to double/triple (decorated) \rwos. We will use the following consequence of unimodularity.

\begin{lemma}\label{lemmatraceseg}
Let $(G,o,T,S)$ be a unimodular random double \rwo. Assume that there is a finite $B$ such that $\|T\|,\|S\|<B$ with probability $1$. Then
\[\Tr(G,o,TS)=\Tr(G,o,ST).\]
\end{lemma}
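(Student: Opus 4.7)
The plan is to write both traces as sums of the form $\mathbb{E}\sum_{v\in V(G)} f(G,o,v,T,S)$ and $\mathbb{E}\sum_{v\in V(G)} f(G,v,o,T,S)$ for the same function $f$ on $bi\mathcal{\rwo}$, and then apply unimodularity. Signedness of $f$ will be handled by decomposing $f=f_+-f_-$, so the essential task is to verify integrability uniformly from the operator norm bound $B$.

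Concretely, I would define
\[
f(G,o,v,T,S)=\sum_{k,k'\in K}\langle S(o,k),(v,k')\rangle\langle T(v,k'),(o,k)\rangle,
\]
which is measurable on $bi\mathcal{\rwo}$ (with the obvious measurable structure for double $\rwo$s). Expanding $TS(o,k)$ in the orthonormal basis $\{(v,k')\mid v\in V(G),k'\in K\}$ of $\ell^2(V(G)\times K)$ gives
\[
\langle TS(o,k),(o,k)\rangle=\sum_{v,k'}\langle S(o,k),(v,k')\rangle\langle T(v,k'),(o,k)\rangle,
\]
so $\sum_{v\in V(G)} f(G,o,v,T,S)=\sum_{k\in K}\langle TS(o,k),(o,k)\rangle$, and hence the expectation equals $\Tr(G,o,TS)$. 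Swapping the roles of the roots,
\[
\sum_{v\in V(G)} f(G,v,o,T,S)=\sum_{k,k'\in K}\sum_{v}\langle S(v,k),(o,k')\rangle\langle T(o,k'),(v,k)\rangle,
\]
and recognizing $\sum_{v,k}\langle S(v,k),(o,k')\rangle(v,k)=S^{*}(o,k')$ collapses the inner sum to $\langle T(o,k'),S^{*}(o,k')\rangle=\langle ST(o,k'),(o,k')\rangle$. Therefore $\mathbb{E}\sum_v f(G,v,o,T,S)=\Tr(G,o,ST)$.

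To apply unimodularity I need $f$ to be integrable (after splitting into positive and negative parts). Cauchy--Schwarz gives, for each fixed $k$,
\[
\sum_{v,k'}\bigl|\langle S(o,k),(v,k')\rangle\langle T(v,k'),(o,k)\rangle\bigr|\le \|S(o,k)\|\cdot\|T^{*}(o,k)\|\le B^{2},
\]
so $\sum_{v}|f(G,o,v,T,S)|\le |K|B^{2}$ almost surely, and similarly with the roots swapped. Writing $f=f_+-f_-$ and applying the unimodular mass-transport identity to each non-negative piece separately yields
\[
\mathbb{E}\sum_v f(G,o,v,T,S)=\mathbb{E}\sum_v f(G,v,o,T,S),
\]
which is exactly $\Tr(G,o,TS)=\Tr(G,o,ST)$. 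The only point requiring care is the measurability of $f$ on $bi\mathcal{\rwo}$ and the legitimacy of the basis expansions under the $\|T\|,\|S\|<B$ bound; both are routine given the uniform $L^{1}$ domination just established.
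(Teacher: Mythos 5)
Your proof is correct and is essentially the same argument the paper invokes: the paper simply defers to Section 5 of Aldous--Lyons, where exactly this mass-transport computation (transporting $\langle S(o,k),(v,k')\rangle\langle T(v,k'),(o,k)\rangle$ from $o$ to $v$, with the Cauchy--Schwarz bound $|K|B^2$ justifying the splitting into positive and negative parts) is carried out.
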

\begin{proof}
The proof is the same as in \cite[Section 5]{ally}.
\end{proof}

It has the following consequences.
\begin{lemma}\label{lranknullity}
In the following statements we always assume that $P$ and $P_i$ are all orthogonal projections with probability $1$. 

\begin{enumerate}
\item\label{elso} Let $(G,o,P_1,P_2,U)$ be a unimodular random triple \rwo, such that with probability $1$ we have $U\restriction \ker P_1\equiv 0$ and $U\restriction \range P_1$ is an isomorphism between $\range P_1$ and $\range P_2$. Then
\[\Tr(G,o,P_1)=\Tr(G,o,P_2).\] 
\item\label{masodik} Let $(G,o,P_1,P_2,T)$ be a unimodular random triple \rwo, such that with probability $1$ we have $\overline{\range TP_1}=\range P_2$ and $T$ is injective on  $\range P_1$. Then
\[\Tr(G,o,P_1)=\Tr(G,o,P_2).\]
\item \label{ranknullity}(rank-nullity theorem) Let $(G,o,P,P_1,P_2,T)$ be a unimodular random quadruple  \rwo, such that with probability $1$ we have that $P_1$ is the orthogonal projection to $\ker (T\restriction \range P)$ and $P_2$ is the orthogonal projection to $\overline{\range (T\restriction \range P)}$. Then
\[\Tr(G,o,P)=\Tr(G,o,P_1)+\Tr(G,o,P_2).\] 
\end{enumerate}
\end{lemma}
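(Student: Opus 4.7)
The plan is to reduce each of the three parts to Lemma \ref{lemmatraceseg} by producing a partial isometry via the polar decomposition. The technical point that polar decomposition respects the random \rwo\ structure and preserves unimodularity is precisely the content of the appendix lemma referenced in the Structure of the Paper.

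For part (1), I would take the polar decomposition $U = V|U|$, where $V$ is a partial isometry with $V^*V$ the orthogonal projection onto $(\ker U)^\perp$ and $VV^*$ the orthogonal projection onto $\overline{\range U}$. The assumptions that $U\restriction\ker P_1\equiv 0$ and $U\restriction\range P_1$ is an isomorphism onto $\range P_2$ give $\ker U = \ker P_1$ and $\range U = \range P_2$ (which is already closed), so $V^*V = P_1$ and $VV^* = P_2$. Since $\|V\|\le 1$, Lemma \ref{lemmatraceseg} applied to the unimodular double \rwo\ $(G,o,V,V^*)$ yields
\[
\Tr(G,o,P_1) = \Tr(G,o,V^*V) = \Tr(G,o,VV^*) = \Tr(G,o,P_2).
\]

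Part (2) is the same argument applied to $U := TP_1$. Injectivity of $T$ on $\range P_1$ gives $\ker(TP_1) = \ker P_1$, while $\overline{\range(TP_1)} = \range P_2$ is given by hypothesis, so the polar decomposition of $TP_1$ again produces a partial isometry $V$ with $V^*V = P_1$ and $VV^* = P_2$, and the same trace calculation delivers the identity.

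For part (3), I observe that $\range P_1 = \ker(T\restriction\range P)\subseteq\range P$, so $P_1\le P$ and $P-P_1$ is the orthogonal projection onto $\range P\ominus\range P_1$. The operator $T$ is injective on this subspace, and $\overline{T(\range(P-P_1))} = \overline{T(\range P)} = \range P_2$. Thus part (2) applied to the triple $(P-P_1,P_2,T)$ gives $\Tr(G,o,P-P_1) = \Tr(G,o,P_2)$, and additivity of the trace finishes the argument. The only genuine obstacle is the measurability of the polar decomposition on the space of bounded \rwos, which is handled separately in the appendix.
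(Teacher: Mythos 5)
Your proposal is correct and follows essentially the same route as the paper: reduce everything to the trace identity of Lemma \ref{lemmatraceseg} via a partial isometry produced by the polar decomposition (whose measurability, hence unimodularity via uniqueness, is the appendix's job), and obtain part (\ref{ranknullity}) from part (\ref{masodik}) by splitting $P$ as $P_1+P_H$ with $H=\range P\ominus\range P_1$. The only deviation is in part (\ref{elso}), where the paper computes directly with $U$ and $U^*$ (using the identities $P_1U^*U=P_1$ and $UP_1U^*=P_2$, which tacitly read ``isomorphism'' as isometric) while you pass to the polar decomposition of $U$ itself; your version is marginally more robust to the non-isometric reading at the cost of one extra appeal to measurability of the polar decomposition.
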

\begin{proof}
To prove part \ref{elso} observe that $P_1U^*U=P_1$ and $UP_1U^*=P_2$. Note that all operators have norm at most $1$, so from Lemma \ref{lemmatraceseg}
\[\Tr(G,o,P_1)=\Tr(G,o,(P_1U^*)U)=\Tr(G,o,U(P_1U^*))=\Tr(G,o,P_2).\]

To prove part \ref{masodik} let $TP_1=UH$ be the unique polar decomposition of $TP_1$, then $(G,o,P_1,P_2,UP_1)$ satisfies the conditions in part \ref{elso}, so the statement follows. The rather technical details why the polar decomposition is measurable are given in the Appendix. Note that once we established the measurability of $U$,  unimodularity follows from the uniqueness of the decomposition.

To prove part \ref{ranknullity} let $H=\range P\cap (\ker T\restriction \range P)^{\bot}$. Let $P_H$ be the orthogonal projection to $H$, then we have $P=P_1+P_H$. Therefore, $\Tr(G,o,P)=\Tr(G,o,P_1)+\Tr(G,o,P_H)$. It is also clear that  $\overline{\range TP}=\overline{\range (T\restriction H)}$ and $T$ is injective on $H$. Thus part \ref{masodik} gives us $\Tr(G,o,P_H)=\Tr(G,o,P_2)$. Putting everything together we obtain that
\[\Tr(G,o,P)=\Tr(G,o,P_1)+\Tr(G,o,P_H)=\Tr(G,o,P_1)+\Tr(G,o,P_2).\]
\end{proof}

\subsection{Conditional determinantal processes}
Let $P$ be an orthogonal projection to a closed subspace $H$ of $\ell^2(E)$. Given $C\subset E$, let $[C]$ be the closed subspace generated by $e\in C$, and let $[C]^{\bot}$ be the orthogonal complement of it. 
Note that $[C]^{\bot}=[E\backslash C]$. We define $P_{/C}$ as the orthogonal projection to the closed subspace $(H\cap [C]^{\bot})+[C]$, and $P_{\times C}$ as the orthogonal projection to the closed subspace $H\cap [C]^{\bot}$. We also define $P_{-C}=I-(I-P)_{/C}$.

\begin{prop}\label{pertimes}
We have $P_{/C}=P_{\times C}+P_{[C]}$, where $P_{[C]}$ is the orthogonal projection to $[C]$. In other words $P_{/C}e=e$ for $e\in C$ and $P_{/C}e=P_{\times C} e$ for $e\in E\backslash C$.  Moreover, if $C_n$ is an increasing sequence of subsets of $E$ and $C=\cup C_n$, then $P_{/C_n}$ converges to $P_{/C}$ in the strong operator topology. Furthermore, the sequence $\langle P_{\times C_n}e,e\rangle$ is monotone decreasing. 
\end{prop}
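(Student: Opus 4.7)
The plan is to reduce everything to two standard facts about monotone sequences of orthogonal projections: a monotone increasing sequence of projections converges strongly to the projection onto the closure of the union of the ranges, and a monotone decreasing sequence converges strongly to the projection onto the intersection of the ranges.

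First I would establish the decomposition $P_{/C}=P_{\times C}+P_{[C]}$. The key observation is that the subspaces $H\cap[C]^{\bot}$ and $[C]$ are orthogonal: indeed, $H\cap[C]^{\bot}\subset[C]^{\bot}$ and $[C]^{\bot}\perp[C]$. Hence the algebraic sum $(H\cap[C]^{\bot})+[C]$ is in fact an orthogonal direct sum, and the projection onto an orthogonal direct sum is the sum of the two projections, giving the claimed formula. Applying this pointwise, for $e\in C$ we have $e\in[C]$ so $e\perp H\cap[C]^{\bot}$, whence $P_{\times C}e=0$ and $P_{[C]}e=e$; for $e\in E\setminus C$ we have $e\perp[C]$, whence $P_{[C]}e=0$ and $P_{/C}e=P_{\times C}e$.

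Next, for the strong convergence I would handle the two pieces of the decomposition separately. Since $C_n$ is increasing, the subspaces $[C_n]$ form an increasing chain with $\overline{\bigcup_n[C_n]}=[C]$, so $P_{[C_n]}\to P_{[C]}$ strongly by the standard monotone convergence result. In the other direction, $[C_n]^{\bot}$ is a decreasing chain of closed subspaces, hence so is $H\cap[C_n]^{\bot}$; I would check that
\[
\bigcap_n (H\cap[C_n]^{\bot})=H\cap\bigcap_n[C_n]^{\bot}=H\cap[C]^{\bot},
\]
using that an element of $\ell^2(E)$ lies in every $[C_n]^{\bot}$ iff it is supported off $\bigcup_n C_n=C$. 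By the monotone convergence theorem for decreasing projections, $P_{\times C_n}\to P_{\times C}$ strongly. Adding the two strong limits yields $P_{/C_n}\to P_{/C}$ in the strong operator topology.

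Finally, the monotonicity of $\langle P_{\times C_n}e,e\rangle$ is immediate once one observes that the ranges of $P_{\times C_n}$ are decreasing in $n$: if $V_1\supset V_2$ are closed subspaces with orthogonal projections $Q_1,Q_2$, then $Q_1\ge Q_2$ in the operator order, so $\langle Q_1 e,e\rangle\ge\langle Q_2 e,e\rangle$. I do not anticipate any serious obstacles here; the one point that requires a brief argument rather than a one-line appeal is the identity $\bigcap_n[C_n]^{\bot}=[C]^{\bot}$, but this follows directly from the characterization of $[C]^{\bot}$ as the functions in $\ell^2(E)$ supported on the complement of $C$.
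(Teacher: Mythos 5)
Your proof is correct and follows essentially the same route as the paper: both decompose $P_{/C}$ as the sum of the orthogonal projections onto $H\cap[C]^{\bot}$ and $[C]$, and both deduce the strong convergence and the monotonicity of $\langle P_{\times C_n}e,e\rangle$ from the fact that $P_{\times C_n}$ projects onto a decreasing chain of closed subspaces with intersection $\range P_{\times C}$. Your verification of $\bigcap_n[C_n]^{\bot}=[C]^{\bot}$ is a detail the paper leaves implicit, but there is no substantive difference in approach.
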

\begin{proof}
The first statement is trivial. To prove the second statement, observe that $P_{\times C_n}$ is a sequence of orthogonal projections to a monotone decreasing sequence of closed subspaces with intersection $\range P_{\times C}$, so $P_{\times C_n}$ converge to $P_{\times C}$ in the strong operator topology. It is also clear that $P_{[C_n]}$ converge to $P_{[C]}$, so from $P_{/ C_n}=P_{\times C_n}+P_{[C_n]}$ the statement follows. To prove the third statement observe that $\langle P_{\times C_n} e,e \rangle=\|P_{\times C_n} e\|_2^2$. So the statement follows again from the fact that $P_{\times C_n}$ is a sequence of orthogonal projections to a monotone decreasing sequence of closed subspaces.  
\end{proof}
  For $C,D\subset E$ we define $P_{/C-D}=(P_{/C})_{-D}$, and we define $P_{-D/C}=(P_{-D})_{/C}$. We only include the next lemma here to make it easier to compare formulas in \cite{lydet} with our formulas.

\begin{lemma}
Let $P$ be an orthogonal projection to a closed subspace $H$. Then for any $D\subset E$ we have 
\[\range P_{-D}=\overline{H+[D]}\cap[D]^{\bot}.\]
Moreover, if $C$ and $D$ are disjoint subsets of $E$, then
\[\range P_{/C-D}=\overline{(H\cap [C]^{\bot})+[C\cup D] }\cap[D]^\bot\]
and
\[\range P_{-D/C}=(\overline{H+[D]}\cap [C\cup D]^\bot)+[C].\]
If $C$ and $D$ are finite, then the above formulas are true even if we omit the closures.
\end{lemma}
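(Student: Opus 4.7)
The plan is to unfold the three layered definitions $(\cdot)_{/C}$, $(\cdot)_{\times C}$, $(\cdot)_{-D}$ one step at a time and simplify using three elementary facts about closed subspaces of $\ell^2(E)$: (i) the duality $(H_1 \cap H_2)^\perp = \overline{H_1^\perp + H_2^\perp}$ for any closed subspaces $H_1, H_2$; (ii) $[C]^\perp = [E \setminus C]$, whence $[C]^\perp \cap [D]^\perp = [C \cup D]^\perp$ for any $C, D \subset E$; and (iii) $[C] + [D] = [C \cup D]$ as closed subspaces whenever $C \cap D = \emptyset$, since both sides equal $\{f \in \ell^2(E) : \operatorname{supp} f \subset C \cup D\}$.

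First I would treat $P_{-D}$. By definition $P_{-D} = I - (I-P)_{/D}$, where $I-P$ is the orthogonal projection onto $H^\perp$. Applying the definition of $(\cdot)_{/D}$, the operator $(I-P)_{/D}$ is the orthogonal projection onto $(H^\perp \cap [D]^\perp) + [D]$. Taking orthogonal complements,
\[
\range P_{-D} = \bigl((H^\perp \cap [D]^\perp) + [D]\bigr)^\perp = (H^\perp \cap [D]^\perp)^\perp \cap [D]^\perp,
\]
and by (i) with $H_1 = H^\perp$, $H_2 = [D]^\perp$, this equals $\overline{H + [D]} \cap [D]^\perp$, proving the first formula.

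For $P_{/C-D} = (P_{/C})_{-D}$, I would apply the first formula with $H$ replaced by $\range P_{/C} = (H \cap [C]^\perp) + [C]$; the sum inside the closure becomes $(H \cap [C]^\perp) + [C] + [D]$, and by (iii) we collapse $[C] + [D]$ to $[C \cup D]$, yielding the second formula. For $P_{-D/C} = (P_{-D})_{/C}$, I would apply the definition of $(\cdot)_{/C}$ with $H$ replaced by $\range P_{-D} = \overline{H + [D]} \cap [D]^\perp$, obtaining
\[
\range P_{-D/C} = \bigl(\overline{H + [D]} \cap [D]^\perp \cap [C]^\perp\bigr) + [C],
\]
and (ii) then merges $[D]^\perp \cap [C]^\perp$ into $[C \cup D]^\perp$, giving the third formula.

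Finally, when $C$ and $D$ are finite, $[C]$, $[D]$, and $[C \cup D]$ are finite-dimensional, and a standard fact from functional analysis states that the sum of a closed subspace and a finite-dimensional subspace is closed; hence $H + [D]$ and $(H \cap [C]^\perp) + [C \cup D]$ are already closed and the closures may be dropped. I do not expect any genuine obstacle here: the proof is essentially a bookkeeping exercise with orthogonal complements and spans. The only point requiring care is that identity (i) genuinely needs the closure on the right-hand side, which is exactly why closures appear in the general formulas and disappear only under the finiteness hypothesis.
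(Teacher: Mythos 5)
Your proof is correct, and it reaches the key identity by a genuinely different (and slicker) route than the paper. Both arguments begin identically: $\range P_{-D}$ is the orthogonal complement of $\range (I-P)_{/D}=(H^{\bot}\cap [D]^{\bot})+[D]$, so everything reduces to the identity $((H^{\bot}\cap [D]^{\bot})+[D])^{\bot}=\overline{H+[D]}\cap[D]^{\bot}$. The paper proves this by introducing the auxiliary subspace $\overline{\range (P_{[D]^{\bot}}\restriction H)}$ and carrying out two explicit containment arguments with limits and inner products, showing it equals $\overline{H+[D]}\cap[D]^{\bot}$ on one side and has orthogonal complement $(H^{\bot}\cap [D]^{\bot})+[D]$ on the other. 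You instead invoke the lattice duality $(H_1\cap H_2)^\perp=\overline{H_1^\perp+H_2^\perp}$, which yields the identity in one line; this is cleaner, and the duality is standard (take orthogonal complements twice in $(H_1^\perp+H_2^\perp)^\perp=H_1\cap H_2$). Your derivation of the second and third formulas by substituting $\range P_{/C}$ and $\range P_{-D}$ back into the definitions, using $[C]+[D]=[C\cup D]$ and $[C]^\perp\cap[D]^\perp=[C\cup D]^\perp$ for disjoint $C,D$, is exactly the ``easy deduction'' the paper leaves to the reader, and your treatment of the finite case via the fact that a closed subspace plus a finite-dimensional one is closed is a self-contained replacement for the paper's citation of Lyons. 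One small point worth making explicit in your write-up: $(H\cap[C]^\perp)+[C]$ is closed because the two summands are mutually orthogonal, which is what licenses feeding it back into the first formula as the new $H$.
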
 
\begin{proof}
We only prove the first statement. The other statements can be easily deduced from it. Unpacking the definitions we need to prove that
\[((H^{\bot}\cap [D]^{\bot})+[D])^{\bot}=\overline{H+[D]}\cap[D]^{\bot}.\]
As a first step observe that $\overline{H+[D]}\cap[D]^{\bot}=\overline{\range (P_{[D]^{\bot}}\restriction H)}$. Indeed, if $x\in \overline{(\range P_{[D]^{\bot}}\restriction H)}$, then $x=\lim x_n$, where for all $n$ we have $x_n\in [D]^{\bot}$ and there is an $y_n\in [D]$ such that \break $x_n+y_n\in H$. But then $x_n=(x_n+y_n)-y_n\in H+[D]$, which implies that $x\in \overline{H+[D]}$.  Clearly $x\in [D]^\bot$, so $x\in \overline{H+[D]}\cap[D]^{\bot}$.

 To prove the other containment let $x\in \overline{H+[D]}\cap[D]^{\bot}$, then $x=\lim x_n$ where \break $x_n=y_n+z_n$ with $y_n\in H$ and $z_n\in [D]$. Since $P_{[D]^{\bot}}$ is continuous, we have \[x=P_{[D]^\bot}x=\lim P_{[D]^{\bot}} (y_n+z_n)= \lim P_{[D]^{\bot}} y_n\in \overline{\range (P_{[D]^{\bot}}\restriction H)}.\] Now it is easy to see that we need to prove that
\[(H^{\bot}\cap [D]^{\bot})+[D]=(\range P_{[D]^{\bot}}\restriction H)^{\bot}.\]  
First let $x\in (\range P_{[D]^{\bot}}\restriction H)^{\bot}$. Then for any $h\in H$ we have
\[0=\langle x,P_{[D]^{\bot}} h\rangle=\langle P_{[D]^{\bot}} x,h\rangle,\]
which implies that $P_{[D]^{\bot}}x\in H^{\bot}\cap [D]^{\bot}$. Thus, ${x=P_{[D]^{\bot}}x+P_{[D]}x\in( H^{\bot}\cap [D]^{\bot})+[D]}$. To show the other containment let us consider $x=y+z$ such that $y\in H^{\bot}\cap [D]^{\bot}$ and $z\in [D]$. Then for any $h\in H$ we have
\[\langle x,P_{[D]^{\bot}} h\rangle=\langle P_{[D]^{\bot}}x, h\rangle=\langle y, h\rangle=0,\]
because $y\in H^{\bot}$.

For the last statement, see the discussion in the paper \cite{lydet} after the proof of Corollary~6.4.
\end{proof}

We have the following lemma. See \cite[Equation (6.5)]{lydet}.

\begin{lemma}\label{fincond}
Let $C$ and $D$ be disjoint finite subsets of $E$ such that \break $\mathbb{P}[B^P\cap(C\cup D)=C]>0$. Then $P_{/C-D}=P_{-D/C}$ and conditioned on the event \break $B^P\cap(C\cup D)=C$, the distribution of $B^P$ is the same as that of $B^{P_{/C-D}}$. 
\end{lemma}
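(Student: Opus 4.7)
The plan is to carry out two tasks: (a) prove the subspace identity $P_{/C-D}=P_{-D/C}$ directly, and (b) identify the conditional law by induction on $|C|+|D|$, with the single-element base case coming from the Schur complement.

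For (a), set $H=\range P$ and use Proposition \ref{pertimes} and its dual to unpack both sides. A short calculation gives
\[\range P_{/C-D} = \bigl((H \cap [C]^\perp) + [C \cup D]\bigr) \cap [D]^\perp\]
and
\[\range P_{-D/C} = \bigl((H + [D]) \cap [C \cup D]^\perp\bigr) + [C].\]
Two easy inclusion checks show that these subspaces coincide: for the forward direction, write $x=h+c+d$ with $h\in H\cap[C]^\perp$, $c\in[C]$, $d\in[D]$, $x\in[D]^\perp$, and verify $h+d\in(H+[D])\cap[C\cup D]^\perp$ using that $C\cap D=\emptyset$ (so $d\in[C]^\perp$, and $x\perp[D]$ forces $P_{[D]}(h)=-d$); the reverse direction is analogous. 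Since $[C]$ and $[D]$ are finite-dimensional, all the sums appearing are closed, so no closure subtleties arise.

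For (b), the base case is $|C|+|D|=1$. Suppose first that $e\in E$ with $P(e,e)>0$. For any finite $F\subset E\setminus\{e\}$ the Schur complement identity gives
\[\det P|_{F\cup\{e\}} = P(e,e)\det\bigl(P(x,y)-P(x,e)P(e,y)/P(e,e)\bigr)_{x,y\in F}.\]
Using $\|Pe\|^2=\langle Pe,e\rangle = P(e,e)$ (from $P^2=P=P^*$) one checks that the Schur-complemented matrix agrees with $P_{\times\{e\}}$ on $E\setminus\{e\}$, while $P_{/\{e\}}e=e$; hence the conditional law of $B^P$ given $\{e\in B^P\}$ is determinantal with kernel $P_{/\{e\}}$. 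The dual statement, that conditioning on $\{e\notin B^P\}$ yields $B^{P_{-\{e\}}}$, follows by applying the same argument to the complementary kernel $I-P$. For the inductive step, enumerate $C=\{c_1,\ldots,c_s\}$, $D=\{d_1,\ldots,d_t\}$ and condition in order on $\{c_1\in B^P\},\ldots,\{c_s\in B^P\},\{d_1\notin B^P\},\ldots,\{d_t\notin B^P\}$. The hypothesis $\mathbb{P}[B^P\cap(C\cup D)=C]>0$ and the chain rule ensure every intermediate conditional probability is positive, and iterating the single-element formulas leaves us with kernel $(P_{/C})_{-D}=P_{/C-D}$.

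The main technical obstacle is the base-case computation, namely the verification that $P_{\times\{e\}}$ is given by the Schur-complemented matrix; this hinges on the identity $\|Pe\|^2=P(e,e)$ together with a one-line block-matrix determinant expansion. Everything else, including the inductive chain-rule step and the subspace identity in (a), is essentially bookkeeping.
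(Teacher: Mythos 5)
Your argument is correct, but note that the paper does not actually prove Lemma \ref{fincond}: it is quoted from Lyons's paper (the citation to Equation (6.5) of \cite{lydet} immediately preceding the statement), so what you have written is a self-contained reconstruction of the cited result rather than an alternative to an in-paper proof. Both halves of your argument check out. For (a), your two inclusions between $\bigl((H\cap[C]^\perp)+[C\cup D]\bigr)\cap[D]^\perp$ and $\bigl((H+[D])\cap[C\cup D]^\perp\bigr)+[C]$ go through exactly as you indicate, using only that $[C]\perp[D]$ and that adding the finite-dimensional space $[C\cup D]$ to a closed subspace keeps it closed; it is worth observing that this shows $P_{/C-D}=P_{-D/C}$ for \emph{any} disjoint finite $C,D$, with no use of the hypothesis $\mathbb{P}[B^P\cap(C\cup D)=C]>0$ (consistent with the paper's later remark that the failure of equality only arises for infinite conditioning sets). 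For (b), the Schur-complement computation together with the formula $P_{\times\{e\}}x=Px-\langle Px,e\rangle\langle Pe,e\rangle^{-1}Pe$ (which the paper itself records inside the proof of Lemma \ref{lemmakorl}) correctly identifies the conditional kernel as $P_{/\{e\}}$, the dual case follows from the complementation $E\setminus B^P=B^{I-P}$, and your use of the chain rule to guarantee that every intermediate conditional probability is positive is precisely where the hypothesis $\mathbb{P}[B^P\cap(C\cup D)=C]>0$ enters. The one step you pass over quickly but which deserves a line is the semigroup property $(P_{/\{c_1\}})_{/\{c_2\}}=P_{/\{c_1,c_2\}}$ (and its dual), which is a short computation with the defining subspaces; with that in place the induction closes and the argument is complete.
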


The lemma above shows why the pairs $(C,D)$ of finite disjoint sets with the property that  $\mathbb{P}[B^P\cap(C\cup D)=C]>0$ are interesting for us. The next proposition gives an equivalent characterization of these pairs.
\begin{prop}
Let $C$ and $D$ be disjoint finite subsets of $E$. Then we have  \break $\mathbb{P}[B^P\cap(C\cup D)=C]>0$ if and only if $\range P_{[C]}P=[C]$ and $\range P_{[D]} (I-P)=[D]$. 
\end{prop}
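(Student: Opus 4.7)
My plan is to express the probability $\mathbb{P}[B^P \cap (C\cup D) = C]$ as a single determinant and then analyze the kernel of the corresponding matrix, using crucially that $P$ is a projection. A standard inclusion-exclusion from the defining identity $\mathbb{P}[F \subset B^P] = \det((\langle Pe, e'\rangle)_{e,e' \in F})$ yields
\[
\mathbb{P}[B^P \cap (C\cup D) = C] = (-1)^{|D|}\det M, \qquad M := P_{[C\cup D]} P\restriction_{[C \cup D]} - I_{[D]},
\]
where $I_{[D]}$ is the orthogonal projection of $[C \cup D]$ onto $[D]$. So the claim reduces to showing $M$ is invertible if and only if both range conditions hold.

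The two range conditions translate immediately to injectivity statements. Viewing $P_{[C]}P$ as an operator $\ell^2(E) \to [C]$, its adjoint is $P\restriction_{[C]} : [C] \to \ell^2(E)$, so finite-dimensional linear algebra gives $\range P_{[C]}P = [C]$ iff $P\restriction_{[C]}$ is injective; analogously, $\range P_{[D]}(I-P) = [D]$ iff $(I-P)\restriction_{[D]}$ is injective. It therefore suffices to prove that $M$ is invertible precisely when both $P\restriction_{[C]}$ and $(I-P)\restriction_{[D]}$ are injective.

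The easy direction is direct: if $Pc = 0$ for some nonzero $c \in [C]$ then $Mc = P_{[C\cup D]}Pc = 0$, while if $(I-P)d = 0$ for some nonzero $d \in [D]$ then $Pd = d$ and $Md = P_{[C\cup D]}d - d = 0$. For the converse, suppose $M(c+d) = 0$ with $c \in [C]$ and $d \in [D]$; equivalently $P_{[C\cup D]}P(c+d) = d$, so $P(c+d) = d + z$ for some $z \in [C\cup D]^\perp$. The key step is the Pythagorean identity for the projection $P$:
\[
\|c\|^2 + \|d\|^2 = \|c+d\|^2 = \|P(c+d)\|^2 + \|(I-P)(c+d)\|^2 = \|d+z\|^2 + \|c-z\|^2 = \|c\|^2 + \|d\|^2 + 2\|z\|^2,
\]
where the final equality uses $z \perp c$ and $z \perp d$. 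Hence $z = 0$, so $P(c+d) = d$ and $(I-P)(c+d) = c$. Applying $P$ to the first identity and using $P^2 = P$ gives $Pd = d$, so $(I-P)d = 0$; injectivity of $(I-P)\restriction_{[D]}$ forces $d = 0$. Symmetrically, applying $(I-P)$ to the second identity gives $Pc = 0$, and injectivity of $P\restriction_{[C]}$ forces $c = 0$. Thus $\ker M = 0$.

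The main obstacle I expect is this converse step: both the Pythagorean identity and the algebraic trick $P^2 = P$ rely essentially on $P$ being an orthogonal projection. This reflects the content of the statement, that positivity of the joint probability really does follow from the two separate conditions on $C$ and $D$ taken together.
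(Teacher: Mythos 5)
Your proof is correct. Note that the paper states this proposition without any proof at all (it is treated as a known consequence of the theory in Lyons's paper on determinantal probability measures), so there is no argument of the author's to compare against; your write-up in effect supplies the missing details. Each step checks out: the inclusion--exclusion identity $\mathbb{P}[B^P\cap(C\cup D)=C]=(-1)^{|D|}\det\bigl(P_{[C\cup D]}P\restriction_{[C\cup D]}-I_{[D]}\bigr)$ is the standard multilinear expansion of the determinant in the columns indexed by $D$; the translation of the two range conditions into injectivity of $P\restriction_{[C]}$ and $(I-P)\restriction_{[D]}$ via adjoints is valid because $[C]$ and $[D]$ are finite-dimensional, so the relevant ranges are closed; the easy direction exhibits explicit kernel vectors; and the converse correctly uses the Pythagorean identity for the orthogonal projection $P$ to kill the component $z\in[C\cup D]^{\bot}$ and then $P^2=P$ to decouple the $C$- and $D$-parts. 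The only thing I would flag is cosmetic: you might spell out the one-line column expansion behind the determinant formula rather than calling it standard, since it is the only external input to an otherwise self-contained argument.
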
 

This motivates the following definitions. A (not necessary finite) subset  $C$ of $E$ is called \emph{independent} (with respect to $P$) if       $\overline{\range P_{[C]}P}=[C]$. A  subset  $D$ of $E$ is called \emph{dually} independent (with respect to $P$) if $\overline{\range P_{[D]}(I-P)}=[D]$. A pair $(C,D)$ of subsets of $E$ is called \emph{permitted} (with respect to $P$) if $C$ and $D$ are disjoint, $C$ is independent and $D$ is dually independent.

We will need the following theorem of Lyons \cite[Theorem 7.2]{lydet}.
\begin{theorem}\label{permthm}
The pair $(B^{P},E\backslash B^{P})$ is permitted with probability $1$.
\end{theorem}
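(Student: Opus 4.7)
The plan is to verify the two halves of ``permitted'' separately, exploiting the symmetry $P \leftrightarrow I-P$ to reduce to a single claim.

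\emph{Reduction and reformulation.} Using the defining relation $\mathbb{P}[F \subseteq B^P] = \det(P_F)$ together with inclusion--exclusion on marginals, one checks that $E \setminus B^P$ is distributed as $B^{I-P}$. Directly from the definitions, a set $D$ is dually $P$-independent if and only if $D$ is $(I-P)$-independent. Hence it suffices to prove that, for every orthogonal projection $P$, the set $B^P$ is $P$-independent almost surely. Next, by taking orthogonal complements inside $[C]$, the condition $\overline{\range P_{[C]} P} = [C]$ is equivalent to injectivity of $P \restriction [C]$: if $v \in [C]$ is orthogonal to $\range P_{[C]} P$, then for every $u$ we have $\langle v, P_{[C]} P u\rangle = \langle P v, u\rangle = 0$, so $P v = 0$, and conversely. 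The goal becomes $[B^P] \cap \ker P = \{0\}$ almost surely.

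\emph{Finite subsets.} For any finite $F \subseteq E$, the probability $\mathbb{P}[F \subseteq B^P] = \det(P_F)$ is the Gram determinant of $\{Pe : e \in F\}$ and is strictly positive exactly when $P \restriction [F]$ is injective. Since $E$ is countable, a union bound over the finite subsets on which $P$ fails to be injective shows that, almost surely, $P \restriction [F]$ is injective for every finite $F \subseteq B^P$.

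\emph{Passage to the closed span.} It remains to upgrade this to injectivity on all of $[B^P]$. If $v \in [B^P] \cap \ker P$, then since $[B^P]$ is the orthogonal complement of $[E \setminus B^P]$, the support of $v$ lies in $B^P$; the finite-support case is dispatched by the previous paragraph. For general $v$ I would condition via Lemma~\ref{fincond}: fix an enumeration $(e_n)$ of $E$, let $(C_n, D_n)$ be the partition of $\{e_1, \ldots, e_n\}$ induced by $B^P$, and set $P_n := P_{/C_n - D_n}$. Lemma~\ref{fincond} gives that, conditionally on $(C_n, D_n)$, the remaining process is determinantal with kernel $P_n$, and Proposition~\ref{pertimes} yields strong convergence $P_n \to P_\infty$ to a projection described in terms of $H = \range P$, $[B^P]$, and $[E \setminus B^P]$. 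The existence of a nonzero $v \in [B^P] \cap \ker P$ corresponds to a quantifiable defect in $P_\infty$ that is visible through the finite-$n$ Gram determinants, and a martingale/continuity argument combined with the finite-subset step would rule it out.

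\emph{Main obstacle.} The genuine difficulty is precisely this last passage from finite to infinite: a sequence $v_n \in [F_n] \cap \ker P$ with $F_n \uparrow \mathrm{supp}(v)$ need not vanish for any $n$, so finite-dimensional Gram-determinant positivity alone is insufficient. Converting this pointwise positivity into a uniform quantitative control along the conditional filtration is the technical heart of Lyons's Theorem~7.2 in \cite{lydet}, and I would ultimately either invoke his result directly or adapt his stochastic-monotonicity argument to propagate the finite-support non-injectivity obstruction to the infinite setting.
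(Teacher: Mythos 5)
The first thing to note is that the paper does not prove this statement at all: it is imported verbatim as Theorem~7.2 of Lyons \cite{lydet}, so the paper's entire ``proof'' is the citation. Your proposal therefore has to stand on its own, and as a self-contained proof it does not close.

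The parts you do carry out are correct and are the right normalizations. The complement of $B^P$ is indeed distributed as $B^{I-P}$; dual independence with respect to $P$ is, by definition, independence with respect to $I-P$; and the computation $\langle v,P_{[C]}Pu\rangle=\langle Pv,u\rangle$ for $v\in[C]$ correctly identifies independence of $C$ with $[C]\cap\ker P=\{0\}$. The finite case --- $\det(\langle Pe,Pf\rangle)_{e,f\in F}>0$ exactly when $P\restriction[F]$ is injective, followed by a union bound over the countably many finite $F$ --- is also fine. But the argument stops precisely where the theorem becomes nontrivial. A nonzero $v\in[B^P]\cap\ker P$ with infinite support is not detected by any finite window: its truncations to $[F_n]$ are generally not in $\ker P$, and the spaces $[F_n]\cap\ker P$ are a.s.\ trivial by your own finite-subset step, so there is no ``defect in $P_\infty$ visible through the finite-$n$ Gram determinants'' --- that sentence is contradicted by your own (accurate) diagnosis in the final paragraph. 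The conditioning-and-martingale sketch is not an argument, since Lemma~\ref{fincond} and Proposition~\ref{pertimes} are themselves only stated for the finite/strong-limit setting and nothing in them quantifies over infinite linear combinations. Since your declared fallback is to invoke Lyons's Theorem~7.2 directly, your proposal ultimately does exactly what the paper does, and the additional material, while sound as far as it goes, does not amount to an independent proof of the hard half.
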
    

We will also need the following statements.

\begin{prop}\label{permprop}
If $(C,D)$ is permitted, $C'\subset C$ and $D'\subset D$, then $(C',D')$ is permitted. 
\end{prop}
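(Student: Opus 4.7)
The plan is to reduce the proposition to a single claim about independence, observe that dual independence is the same statement applied to $I-P$ in place of $P$, and then prove the independence claim by a one-line argument using that $[C']\subset[C]$ implies $P_{[C']}=P_{[C']}P_{[C]}$.

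First, disjointness is immediate: $C\cap D=\emptyset$ and $C'\subset C$, $D'\subset D$ force $C'\cap D'=\emptyset$. So it suffices to prove the following claim: if $C$ is independent with respect to $P$ and $C'\subset C$, then $C'$ is independent with respect to $P$. Once this is established, applying it with $P$ replaced by the orthogonal projection $I-P$ gives the analogous statement for dual independence, since a set $X$ is dually independent with respect to $P$ exactly when it is independent with respect to $I-P$.

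To prove the claim, let $e\in C'$. The inclusion $[C']\subset[C]$ gives $P_{[C']}P_{[C]}=P_{[C']}$, and in particular $P_{[C']}e=e$. Since $C$ is independent, $\overline{\range P_{[C]}P}=[C]\supset[C']\ni e$, so there exists a sequence $v_n\in\ell^2(E)$ with $P_{[C]}Pv_n\to e$. Applying the bounded operator $P_{[C']}$ to both sides yields
\[
P_{[C']}Pv_n \;=\; P_{[C']}P_{[C]}Pv_n \;\longrightarrow\; P_{[C']}e \;=\; e.
\]
Thus $e\in \overline{\range P_{[C']}P}$, and since $e\in C'$ was arbitrary we get $[C']\subset \overline{\range P_{[C']}P}$. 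The reverse inclusion $\overline{\range P_{[C']}P}\subset[C']$ is automatic because $P_{[C']}$ takes values in $[C']$ which is closed. Hence $\overline{\range P_{[C']}P}=[C']$, i.e.\ $C'$ is independent.

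There is no real obstacle; the only thing to get right is the identity $P_{[C']}=P_{[C']}P_{[C]}$, which holds precisely because $[C']$ is a closed subspace of $[C]$, and then the continuity of $P_{[C']}$ allows us to pass to the limit.
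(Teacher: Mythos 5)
Your proof is correct. Note that the paper states Proposition \ref{permprop} without any proof at all (it is treated as an easy observation), so there is nothing to compare against; your argument supplies exactly the missing details. The reduction of dual independence to independence of $I-P$ matches the paper's own remark in Proposition \ref{propfele} ("$D$ is dually independent with respect to $P_{/C}$, or equivalently, $D$ is independent with respect to $I-P_{/C}$"), and your key identity $P_{[C']}=P_{[C']}P_{[C]}$ is valid since $[C']$ is a closed subspace of $[C]$. One slightly slicker alternative worth noting: since $\overline{\range P_{[C]}P}=(\ker PP_{[C]})^{\bot}$ and $\ker(PP_{[C]})=([C]\cap\ker P)\oplus[C]^{\bot}$, independence of $C$ is equivalent to $[C]\cap\ker P=\{0\}$, from which heredity under passing to subsets is immediate; but your direct approximation argument is equally rigorous and requires no extra machinery.
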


\begin{prop}\label{propfele}
Assume $(C,D)$ is a permitted pair. Then $D$ is dually independent with respect to $P_{/C}$, or equivalently, $D$ is independent with respect to $I-P_{/C}$.
\end{prop}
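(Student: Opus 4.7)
The plan is to reduce both the hypothesis and the conclusion to simple subspace-intersection conditions, and then verify the implication by a short direct computation. The key observation is the following characterization: for any orthogonal projection $Q$ on $\ell^2(E)$ and any $D\subset E$, the set $D$ is dually independent with respect to $Q$ if and only if $[D]\cap \range Q=\{0\}$.

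To establish this characterization, write $A = P_{[D]}(I-Q)$ and use $\overline{\range A} = (\ker A^*)^\perp$. Since $A^* = (I-Q)P_{[D]}$, a vector $x$ lies in $\ker A^*$ iff $P_{[D]}x\in \range Q$, which gives the orthogonal decomposition $\ker A^* = [D]^\perp \oplus (\range Q \cap [D])$. Taking orthogonal complements yields $\overline{\range A} = [D]\ominus (\range Q\cap [D])$, so $\overline{\range A}=[D]$ is equivalent to $\range Q\cap [D]=\{0\}$.

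With the characterization in hand, the rest of the proof is short. The hypothesis that $D$ is dually independent with respect to $P$ translates to $H\cap [D]=\{0\}$, where $H=\range P$. The conclusion we want is $\range P_{/C}\cap [D]=\{0\}$. By Proposition \ref{pertimes} we have $\range P_{/C}=(H\cap [C]^\perp)+[C]$. Since $C,D\subset E$ are disjoint basis-index sets we have $[D]\subset [C]^\perp$; hence any $y\in [D]\cap \range P_{/C}$, written as $y=h+c$ with $h\in H\cap [C]^\perp$ and $c\in [C]$, satisfies $c=y-h\in [C]\cap [C]^\perp=\{0\}$, so $y=h\in H\cap [D]=\{0\}$. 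The equivalent formulation in terms of $I-P_{/C}$ is automatic: $P_{/C}$ is an orthogonal projection, so $I-P_{/C}$ is one too, and by definition dual independence with respect to $P_{/C}$ is the same as independence with respect to $I-P_{/C}$.

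I do not expect any real obstacle; the argument is essentially bookkeeping, with the only crucial geometric input being the orthogonality $[D]\perp [C]$ coming from disjointness of the index sets, which is precisely what kills the $[C]$-component in the decomposition of $y$.
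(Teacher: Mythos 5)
Your proof is correct, but it takes a genuinely different route from the paper's. The paper argues by direct range containment: since $\overline{\range P_{[D]}(I-P)}=[D]$ by hypothesis, it suffices to show $\range P_{[D]}(I-P)\subset \range P_{[D]}(I-P_{/C})$, which the author does by taking $r=P_{[D]}(I-P)x$, exhibiting the explicit vector $y=P_{[C]^{\bot}}(I-P)x$, checking by a two-line orthogonality computation that $y\in\range(I-P_{/C})$, and then using disjointness of $C$ and $D$ to conclude $P_{[D]}y=r$. You instead first establish the clean geometric characterization that $D$ is dually independent with respect to an orthogonal projection $Q$ if and only if $[D]\cap\range Q=\{0\}$ (via $\overline{\range A}=(\ker A^*)^{\perp}$ and the orthogonal decomposition $\ker A^*=[D]^{\perp}\oplus(\range Q\cap[D])$, both of which check out), and then verify $[D]\cap\range P_{/C}=\{0\}$ by elementary subspace algebra using $\range P_{/C}=(H\cap[C]^{\perp})+[C]$ and $[D]\subset[C]^{\perp}$. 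Both arguments hinge on the same geometric input, namely $[D]\perp[C]$. What your version buys is a reusable necessary-and-sufficient criterion for (dual) independence that also makes Proposition \ref{permprop} and the characterization of pairs with $\mathbb{P}[B^P\cap(C\cup D)=C]>0$ transparent; what the paper's version buys is brevity and the avoidance of any closedness bookkeeping, since it never needs to identify $\ker A^*$ or pass to orthogonal complements of sums of subspaces. One small point worth making explicit in your write-up: the sum $(H\cap[C]^{\perp})+[C]$ is closed because the two summands are mutually orthogonal closed subspaces, so it really is all of $\range P_{/C}$; you use this implicitly when you decompose an arbitrary $y\in\range P_{/C}$ as $h+c$.
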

\begin{proof}
By the definition of a permitted pair $\overline{\range P_{[D]}(I-P)}=[D]$, so it is enough to show that $\range P_{[D]}(I-P)\subset \range P_{[D]} (I-P_{/C})$. 
Take any  $r\in \range P_{[D]}(I-P)$, then there is $x$ such that $r=P_{[D]}(I-P)x$. Let $y=P_{[C]^{\bot}}(I-P)x$. 
We claim that $y\in \range(I-P_{/C})$, or in other words, $y$ is orthogonal to any element $w\in \range P_{/C}$. We can write $w$ as $w=w_0+w_1$, where $w_0\in \range P\cap [C]^\bot$ and $w_1\in [C]$. We have 
\[\langle y,w_0\rangle=\langle P_{[C]^{\bot}}(I-P)x,w_0\rangle=\langle (I-P)x,P_{[C]^{\bot}}w_0\rangle=\langle (I-P)x ,w_0\rangle=0, \]  
since $w_0\in \range P$. Moreover $\langle y,w_1\rangle=0$, because $y\in [C]^\bot$ and $w_1\in [C]$. Thus, $\langle y,w\rangle=0$, so $y$ is indeed in the image of $I-P_{/C}$, then $P_{[D]}y$ is in the image of $P_{[D]} (I-P_{/C})$. Using that $C$ and $D$ are disjoint $P_{[D]}y=P_{[D]}P_{[C]^{\bot}}(I-P)x=P_{[D]}(I-P)x=r$. 
\end{proof}

Assume for a moment that $E$ is finite, then $|B^P|=\dim\range P$ with probability $1$. If $(C,D)$ is a permitted pair, then the distribution of $B^{P_{/C-D}}$ is the same as that of  $B^{P}$ conditioned on the event that $B^{P}\cap(C\cup D)=C$. So $|B^{P_{/C-D}}|=\dim\range P$ with probability $1$. In particular, $\mathbb{E}|B^P|=\mathbb{E}|B^{P_{/C-D}}|$. The next lemma extends this statement to the more general unimodular setting.

\begin{lemma}\label{lemmaTr}
Let $(G,o,P,C,D)$ be a unimodular random decorated \wpc\, where $P$ is an orthogonal projection and the pair $(C,D)$ is permitted with probability $1$. Then
\[\Tr(G,o,P)=\Tr(G,o,P_{/C-D})=\Tr(G,o,P_{-D/C}).\]
\end{lemma}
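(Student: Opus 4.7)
The strategy is to reduce both equalities to a single sub-claim (call it Step~1): \emph{if $(G,o,P',C')$ is a unimodular random decorated \wpc, $P'$ is an orthogonal projection with probability~$1$, and $C'$ is independent with respect to $P'$ with probability~$1$, then $\Tr(G,o,P') = \Tr(G,o,P'_{/C'})$.} Granting Step~1, the first equality in the lemma will follow by first applying Step~1 to $(P,C)$ to obtain $\Tr(G,o,P) = \Tr(G,o,P_{/C})$, then — using Proposition~\ref{propfele}, which guarantees that $D$ is independent with respect to $I - P_{/C}$ — applying Step~1 to $(I - P_{/C},\,D)$ to get $\Tr(G,o,I - P_{/C}) = \Tr(G,o,(I-P_{/C})_{/D}) = \Tr(G,o,I - P_{/C-D})$. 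Because $\Tr$ is linear and $\Tr(G,o,I)=|K|$, this rearranges to $\Tr(G,o,P_{/C}) = \Tr(G,o,P_{/C-D})$, and chaining yields $\Tr(G,o,P) = \Tr(G,o,P_{/C-D})$.

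For the second equality I would run a symmetric argument: first apply Step~1 to $(I-P,D)$ — permissible because the dual independence of $D$ with respect to $P$ is exactly the independence of $D$ with respect to $I-P$ — producing $\Tr(G,o,P) = \Tr(G,o,P_{-D})$; then apply Step~1 to $(P_{-D},C)$, where the required independence of $C$ with respect to $P_{-D}$ is obtained by applying Proposition~\ref{propfele} to the operator $I-P$ with the permitted pair $(D,C)$, giving $C$ independent with respect to $I-(I-P)_{/D}=P_{-D}$. This produces $\Tr(G,o,P_{-D}) = \Tr(G,o,P_{-D/C})$ and chaining finishes the argument. At every step the unimodularity of the derived decorated random \rwo\ is inherited from $(G,o,P,C,D)$, since the operators $I-P$, $P_{/C}$, $P_{-D}$ and $P'_{/C'}$ are canonical, graph-isomorphism-equivariant, measurable functions of the input data.

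To prove Step~1, I would apply Lemma~\ref{lranknullity}(\ref{masodik}) to the unimodular triple $(G,o,P',P'_{/C'},P'_{/C'})$ with $P_1 = P'$, $P_2 = P'_{/C'}$ and $T = P'_{/C'}$. Using the decomposition $P'_{/C'} = P'_{\times C'} + P_{[C']}$ from Proposition~\ref{pertimes} and the orthogonality $\range P'_{\times C'} \subset [C']^\bot$, injectivity of $P'_{/C'}$ on $\range P'$ is immediate: if $x\in\range P'$ and $P'_{/C'}x=0$, then the two orthogonal summands $P'_{\times C'}x$ and $P_{[C']}x$ both vanish, forcing $x\in\range P'\cap [C']^\bot=\range P'_{\times C'}$ and hence $x = P'_{\times C'}x = 0$. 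For the density $\overline{P'_{/C'}(\range P')}=\range P'_{/C'}$, decompose $\range P' = V_0\oplus V_1$ with $V_0 = \range P'\cap [C']^\bot$ and $V_1$ its orthogonal complement inside $\range P'$. Every $v_0\in V_0\subset \range P'_{/C'}$ is fixed by $P'_{/C'}$, so $V_0\subset P'_{/C'}(\range P')$. For any $c\in[C']$, independence of $C'$ — equivalently, density of $P_{[C']}(\range P')$ in $[C']$ — furnishes $y_n\in \range P'$ with $P_{[C']}y_n\to c$; replacing $y_n$ by its $V_1$-component $\tilde y_n$ preserves $P_{[C']}y_n$ (since $V_0\subset [C']^\bot$) and kills $P'_{\times C'}\tilde y_n$ (since $\tilde y_n\perp V_0$), so $P'_{/C'}\tilde y_n = P_{[C']}\tilde y_n\to c$. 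Thus $\range P'_{/C'} = V_0 + [C']\subset \overline{P'_{/C'}(\range P')}$, closing Step~1.

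The main obstacle will be the density step: exploiting the abstract independence hypothesis on $C'$ to produce vectors in $\range P'$ whose $P'_{/C'}$-images approximate arbitrary elements of $[C']$, while using the orthogonal splitting $\range P' = V_0\oplus V_1$ to ensure the $\range P'_{\times C'}$-contribution automatically vanishes. The remaining ingredients — linearity and normalization of $\Tr$, the symmetry between independence with respect to $P$ and to $I-P$, and the measurability of the derived projections (treated as in the Appendix used for Lemma~\ref{lranknullity}) — are routine.
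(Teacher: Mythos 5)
Your proposal is correct, and its overall architecture coincides with the paper's: both reduce the lemma to the single-set statement (the paper's Lemma \ref{lemmaTr2}) that $\Tr(G,o,P')=\Tr(G,o,P'_{/C'})$ for an independent $C'$, obtain the dual statement by passing to $I-P'$, and chain the two via Proposition \ref{propfele} exactly as you describe; your verification that $(D,C)$ is permitted with respect to $I-P$, needed for the $P_{-D/C}$ half, is a correct unpacking of what the paper leaves implicit. The only genuine difference is in the core step. The paper applies the rank--nullity identity (Lemma \ref{lranknullity}.\ref{ranknullity}) to the operator $P_{[C]}\restriction \range P$, whose kernel is $\range P_{\times C}$ and whose range is dense in $[C]$ by independence, yielding $\Tr(G,o,P)=\Tr(G,o,P_{\times C})+\Tr(G,o,P_{[C]})$ and then concluding by linearity of $\Tr$ together with $P_{/C}=P_{\times C}+P_{[C]}$ from Proposition \ref{pertimes}. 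You instead invoke Lemma \ref{lranknullity}.\ref{masodik} directly with $T=P'_{/C'}$, checking by hand that $P'_{/C'}$ is injective on $\range P'$ and maps it onto a dense subspace of $\range P'_{/C'}$; both verifications are sound (the orthogonality of the two summands $P'_{\times C'}$ and $P_{[C']}$ and the splitting $\range P'=V_0\oplus V_1$ do exactly the work you claim). Since the paper derives its rank--nullity statement from part \ref{masodik} anyway, the two routes are essentially equivalent; the paper's is shorter because rank--nullity packages the kernel and range computations, while yours avoids one layer of indirection at the cost of an explicit injectivity-and-density argument.
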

This can be obtained from combining Proposition \ref{propfele} and the following lemma.

\begin{lemma}\label{lemmaTr2}
Let $(G,o,P,C)$ be a unimodular random decorated \wpc\, where $P$ is an orthogonal projection and $C$ is independent with probability $1$. Then
\[\Tr(G,o,P)=\Tr(G,o,P_{/C}).\] 
We also have the corresponding dual statement, that is, let $(G,o,P,D)$ be a unimodular random decorated \wpc\, where $P$ is an orthogonal projection and $D$ is dually independent with probability $1$. Then
\[\Tr(G,o,P)=\Tr(G,o,P_{-D}).\] 
\end{lemma}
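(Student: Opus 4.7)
The plan is to deduce Lemma \ref{lemmaTr2} from the rank-nullity theorem (part \ref{ranknullity} of Lemma \ref{lranknullity}) by choosing $T = P_{[C]}$ and reading off the two projections from the definition of independence.

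More precisely, I would first focus on the primary statement. Consider the quadruple $(G,o,P,P_{\times C},P_{[C]},P_{[C]})$. Since $P_{\times C}$ and $P_{[C]}$ are measurable functions of $(P,C)$, this quadruple inherits unimodularity from $(G,o,P,C)$. I then want to check the two hypotheses of Lemma~\ref{lranknullity}\ref{ranknullity} with $T = P_{[C]}$. The kernel of $P_{[C]}$ restricted to $\range P$ is exactly $\range P \cap [C]^\bot$, whose orthogonal projection (inside $\range P$, but equivalently in the whole space, once we note we only track traces) is $P_{\times C}$. For the closure of the image, by independence of $C$ we have $\overline{\range P_{[C]}P} = [C]$, which is the range of $P_{[C]}$. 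Hence Lemma~\ref{lranknullity}\ref{ranknullity} yields
\[
\Tr(G,o,P) \;=\; \Tr(G,o,P_{\times C}) + \Tr(G,o,P_{[C]}).
\]
Combining this with the identity $P_{/C} = P_{\times C} + P_{[C]}$ from Proposition~\ref{pertimes} and the linearity of $\Tr$, we obtain $\Tr(G,o,P) = \Tr(G,o,P_{/C})$.

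The dual statement follows by applying the first statement to $(G,o,I-P,D)$. Indeed, $I-P$ is an orthogonal projection, and $D$ is dually independent with respect to $P$ means precisely that $D$ is independent with respect to $I-P$; unimodularity of $(G,o,I-P,D)$ is immediate from that of $(G,o,P,D)$. Thus $\Tr(G,o,I-P) = \Tr(G,o,(I-P)_{/D})$. Using the definition $P_{-D} = I - (I-P)_{/D}$ and the identity $\Tr(G,o,I) = |K|$, this rearranges to $\Tr(G,o,P) = \Tr(G,o,P_{-D})$.

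The main obstacle I foresee is really a bookkeeping one: verifying that $(G,o,P,P_{\times C},P_{[C]},P_{[C]})$ is a genuinely measurable quadruple \rwo, i.e.\ that $P_{\times C}$ depends measurably on $(P,C)$ in the topology on $\mathcal{\rwo}$. Since $P_{\times C}$ is the orthogonal projection onto the intersection of $\range P$ and $[C]^\bot$, this is essentially the same kind of Borel-measurability issue as for the polar decomposition treated in the Appendix, and one can either invoke that framework or give a direct argument by writing $P_{\times C}$ as a limit (via Proposition~\ref{pertimes}) of finite-support approximations. Once measurability is granted, the argument is clean.
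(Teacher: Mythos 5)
Your proof is correct and is essentially identical to the paper's own argument: both apply the rank-nullity lemma with $T=P_{[C]}$, identify the kernel projection as $P_{\times C}$ and the range projection as $P_{[C]}$ via independence, combine with $P_{/C}=P_{\times C}+P_{[C]}$ from Proposition \ref{pertimes}, and obtain the dual statement by applying the first to $I-P$. No substantive differences to report.
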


\begin{proof}
We only need to prove the first statement, because the second one can be obtained by applying the first statement to $I-P$.

Observe that $\ker (P_{[C]}\restriction \range P)=\range P_{\times C}$ from the definition of $P_{\times C}$, moreover, \break $\overline{\range  (P_{[C]}\restriction \range P)}=[C]$, because $C$ is independent. Applying the rank nullity theorem (Lemma \ref{lranknullity}.\ref{ranknullity}) and then using the fact $P_{/C}=P_{\times C}+P_{[C]}$ from Proposition \ref{pertimes} we get that
\[
\Tr(G,o,P)=\Tr(G,o,P_{\times C})+\Tr(G,o,P_{[C]})=\Tr(G,o,P_{\times C}+P_{[C]})=\Tr(G,o,P_{/C}).
\]
\end{proof}

The next lemma gives an extension of Lemma \ref{fincond}.

\begin{lemma}\label{lemmakond1}
Let $F\subset E$, and assume that \[\langle P_{/B^P\cap F-F\backslash B^P}e,e\rangle=\langle P_{-F\backslash B^p /B^P\cap F}e,e\rangle\] for all $e\in E$ with probability $1$. Then for any finite  $A\subset E$ we have

\[\mathbb{P}(A\subset B^P|B^P\restriction F)=\mathbb{P}(A\subset B^{P_{/B^P\cap F-F\backslash B^P}}).\]
\end{lemma}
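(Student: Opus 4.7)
The plan is to exhaust $F$ by an increasing sequence of finite subsets, apply the finite-case formula of Lemma \ref{fincond}, and pass to the limit using martingale convergence on the conditional probabilities together with Proposition \ref{pertimes} on the operators. Specifically, I would pick finite $F_1\subset F_2\subset\cdots$ with $\bigcup_n F_n=F$, and set $C_n=B^P\cap F_n$, $D_n=F_n\setminus C_n$, so that $C_n\uparrow C:=B^P\cap F$ and $D_n\uparrow D:=F\setminus B^P$ almost surely. By Theorem \ref{permthm} the pair $(B^P,E\setminus B^P)$ is permitted a.s., and by Proposition \ref{permprop} every sub-pair $(C_n,D_n)$ is also permitted a.s., so Lemma \ref{fincond} yields, for every finite $A\subset E$,
\[
\mathbb{P}(A\subset B^P\mid B^P\cap F_n)=\det\bigl(\langle P_{/C_n-D_n}a,b\rangle\bigr)_{a,b\in A}\quad\text{a.s.}
\]

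I would then let $n\to\infty$. Since the $\sigma$-algebras $\sigma(B^P\cap F_n)$ are increasing with union $\sigma(B^P\cap F)$, Lévy's upward martingale convergence theorem gives
\[
\mathbb{P}(A\subset B^P\mid B^P\cap F_n)\longrightarrow\mathbb{P}(A\subset B^P\mid B^P\cap F)\quad\text{a.s.}
\]
It therefore remains to show that $\det(\langle P_{/C_n-D_n}a,b\rangle)_{a,b\in A}$ converges a.s.\ to $\det(\langle P_{/C-D}a,b\rangle)_{a,b\in A}=\mathbb{P}(A\subset B^{P_{/C-D}})$, and by continuity of the determinant this reduces to the entrywise convergence $\langle P_{/C_n-D_n}e,e'\rangle\to\langle P_{/C-D}e,e'\rangle$ a.s.\ for every $e,e'\in E$.

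For the diagonal case $e=e'$ I would argue by a two-sided squeeze. Lemma \ref{fincond} provides the representations $P_{/C_n-D_n}=(P_{/C_n})_{-D_n}=(P_{-D_n})_{/C_n}$. The monotonicity that $R_{-D}$ is decreasing in $D$ for fixed $R$, combined with Proposition \ref{pertimes} applied to $I-P_{/C_n}$ and the finite approximations $D_m\uparrow D$, yields $\liminf_n\langle P_{/C_n-D_n}e,e\rangle\ge\langle P_{/C-D}e,e\rangle$; dually, the monotonicity that $R_{\times C}$ is decreasing in $C$, combined with $P_{-D_n}\to P_{-D}$ strongly (Proposition \ref{pertimes} applied to $I-P$) and continuity of the $\times C_m$ operation for fixed finite $C_m$, yields $\limsup_n\langle P_{/C_n-D_n}e,e\rangle\le\langle P_{-D/C}e,e\rangle$. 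Combining,
\[
\langle P_{/C-D}e,e\rangle\le\liminf_n\langle P_{/C_n-D_n}e,e\rangle\le\limsup_n\langle P_{/C_n-D_n}e,e\rangle\le\langle P_{-D/C}e,e\rangle,
\]
and the hypothesis of the lemma collapses the two extremes, giving diagonal convergence a.s. Off-diagonal convergence would then follow from diagonal convergence and self-adjointness of the projections $P_{/C_n-D_n}$: the norm convergence $\|P_{/C_n-D_n}e\|^2=\langle P_{/C_n-D_n}e,e\rangle\to\|P_{/C-D}e\|^2$ combined with weak-operator compactness pins down any subsequential weak limit, upgrading to strong-operator convergence via a standard polarization/Cauchy--Schwarz argument.

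The hardest part is the squeeze step above. In the infinite setting $P_{/C-D}$ and $P_{-D/C}$ can genuinely differ as operators, and neither of the two iterated limits (applying $/C_n$ first or $-D_n$ first) is automatically continuous in both of its parameters; the hypothesis of the lemma is precisely the natural condition ruling out this pathology along the diagonal, making the squeeze effective and thereby extending the finite-case conditional-determinantal formula of Lemma \ref{fincond} to an arbitrary $F$.
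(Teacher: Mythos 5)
Your skeleton coincides with the paper's: exhaust $F$ by finite sets $F_n$, apply Lemma \ref{fincond} on each $F_n$, use L\'evy's upward martingale convergence for the conditional probabilities, and squeeze the approximants between $P_{/C-D}$ and $P_{-D/C}$ so that the hypothesis collapses the two bounds. The diagonal part of your squeeze is essentially the paper's argument specialized to singletons, and it can be made rigorous from the monotonicity in Proposition \ref{pertimes} together with a continuity statement for conditioning on a fixed finite set (which the paper isolates and justifies explicitly); note, though, that your parenthetical ``$R_{-D}$ is decreasing in $D$'' has the wrong sign --- $\langle R_{-D}e,e\rangle$ \emph{increases} as $D$ grows, and it is precisely this increase that makes the liminf bound work.

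The genuine gap is the off-diagonal step. Knowing $\langle P_{/C_n-D_n}e,e\rangle\to\langle P_{/C-D}e,e\rangle$ for every $e$ does not pin down a subsequential weak-operator limit $Q$: two distinct self-adjoint contractions, indeed two distinct projections such as the rank-one projections onto $(1,1)/\sqrt{2}$ and $(1,-1)/\sqrt{2}$, can share all diagonal entries, and a weak-operator limit of projections need not itself be a projection, so the ``norm convergence plus polarization'' upgrade has no identified target to converge to. Moreover, entrywise convergence is strictly stronger than what the statement needs and is not forced by the determinantal data: the law of $B^{R}$ determines a real projection $R$ only up to conjugation by a diagonal sign matrix, so the probabilities you are computing do not even control the signs of the off-diagonal entries of $P_{/C_n-D_n}$. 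The paper sidesteps operators entirely at this stage: in Lemma \ref{lemmakond2} it sandwiches $\mathbb{P}[B^{P_n}\cap A\in\mathcal{A}]$, for every finite $A$ and every upwardly closed $\mathcal{A}\subset 2^A$, between $\mathbb{P}[B^{P_{/C-D}}\cap A\in\mathcal{A}]$ and $\mathbb{P}[B^{P_{-D/C}}\cap A\in\mathcal{A}]$, using negative association of determinantal measures (\cite[Theorem 6.5]{lydet}) for the monotone step in $C$ (resp.\ $D$) and strong convergence of $P_{/C\cap F_m}$ for the intermediate limit. This exhibits $B^{P_{-D/C}}$ as stochastically dominating $B^{P_{/C-D}}$; since the hypothesis equalizes their intensities, the two laws coincide, and the sandwich forces weak convergence of the measures $B^{P_n}$ to $B^{P_{/C-D}}$ --- which is all the martingale argument requires. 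To repair your route you must either prove this weak convergence of measures directly, as the paper does, or supply an independent argument identifying the limit operator and not merely its diagonal.
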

\begin{proof}
Let $F_1,F_2,\dots$ be an increasing sequence of finite sets such that their union is $F$. The  crucial step in the proof is the following lemma.  

\begin{lemma}\label{lemmakond2}
Let $(C,D)$ be a permitted pair, such that $C\cup D=F$. Then $\langle P_{/C-D}e,e\rangle\le \langle P_{-D/C}e,e\rangle$ for all $e\in E$. Now assume that $\langle P_{/C-D}e,e\rangle=\langle P_{-D/C}e,e\rangle$ for all $e\in E$. Let us define $P_n=P_{/C\cap F_n-D\cap F_n}$. Then $B^{P_{/C-D}}$ is the weak limit of $B^{P_n}$. 
\end{lemma}
\begin{proof}
Let $A$ be a finite set such that, $A\cap F=\emptyset$, moreover let $\mathcal{A}$ be an upwardly closed subset of $2^A$, i.e. if $X\subset Y\subset A$ and $X\in\mathcal{A}$, then $Y\in\mathcal{A}$.   Using that determinantal measures have negative  associations
(\cite[Theorem 6.5]{lydet}) we get the following inequality for $m>n$ 
\[\mathbb{P}[ B^{P_n}\cap A\in \mathcal{A}]=\mathbb{P}[B^{P_{/C\cap F_n-D\cap F_n}}\cap A\in \mathcal{A}]\ge \mathbb{P}[B^{P_{/C\cap F_m-D\cap F_n}}\cap A\in \mathcal{A}] .\]
Tending to infinity with $m$, we get that
\begin{equation}\label{inq1}
\mathbb{P}[B^{P_n}\cap A\in \mathcal{A}]\ge \mathbb{P}[B^{P_{/C-D\cap F_n}}\cap A\in \mathcal{A}].
\end{equation}
To justify this last statement, let $\mathcal{U}$ be the set of orthogonal projections $R$ such that $D\cap F_n$ is dually independent with respect to $R$. Combining Proposition~\ref{permprop} and Proposition~\ref{propfele}, we obtain that $P_{/C\cap F_m}$ and $P_{/C}$ are all contained in $\mathcal{U}$.  For $R\in \mathcal{U}$, the probability  \break $\mathbb{P}[ B^{R_{-D\cap F_n}}\cap A\in \mathcal{A}]$ is a continuous function of $(\langle Re,f\rangle)_{e,f\in A\cup(D\cap F_n)}$. As  we proved in Proposition \ref{pertimes},  $P_{/C\cap F_m}$ tends to $P_{/C}$ in the strong operator topology.  
Thus,  \[\lim_{m\to\infty}\mathbb{P}[B^{P_{/C\cap F_m-D\cap F_n}}\cap A\in \mathcal{A}]=\mathbb{P}[B^{P_{/C-D\cap F_n}}\cap A\in \mathcal{A}].\]
This gives us Inequality \eqref{inq1}.

Tending to infinity with $n$ we get that
\[\liminf_{n\to\infty} \mathbb{P}[ B^{P_n}\cap A\in \mathcal{A}]\ge \lim_{n\to\infty} \mathbb{P}[B^{P_{/C-D\cap F_n}}\cap A\in \mathcal{A}]=\mathbb{P}[ B^{P_{/C-D}}\cap A\in \mathcal{A}]. \]
A similar argument gives that
\[\limsup_{n\to\infty} \mathbb{P}[B^{P_n}\cap A\in \mathcal{A}]\le \mathbb{P}[B^{P_{-D/C}}\cap A\in \mathcal{A}]\]
Therefore,
\begin{align}\label{ineqst}
\mathbb{P}[B^{P_{-D/C}}\cap A\in \mathcal{A}]&\ge\limsup_{n\to\infty} \mathbb{P}[B^{P_n}\cap A\in \mathcal{A}]\\&\ge \liminf_{n\to\infty} \mathbb{P}[ B^{P_n}\cap A\in \mathcal{A}]\ge \mathbb{P}[B^{P_{/C-D}}\cap A\in \mathcal{A}]\nonumber.
\end{align}
These inequalities are in fact true  without the assumption $A\cap F=\emptyset$. Indeed, let $A\subset E$ finite  and $\mathcal{A}$ be an upwardly closed subset of $2^A$. We define $A'=A\backslash F$ and
\[\mathcal{A}'=\{X\subset A'|X\cup(A\cap C)\in \mathcal{A}\}.\]
Note that $\mathcal{A}'$ is upwardly closed subset of $2^{A'}$. 

Then $\mathbb{P}[B^{P_{/C-D}}\cap A\in \mathcal{A}]=\mathbb{P}[B^{P_{/C-D}}\cap A'\in \mathcal{A}']$. Moreover, for any large enough $n$, we have  $\mathbb{P}[B^{P_n}\cap A\in \mathcal{A}]=\mathbb{P}[ B^{P_n}\cap A'\in \mathcal{A}']$. Clearly $A'\cap F=\emptyset$, so we reduced the problem to the already established case.

Choosing $A=\{e\}$ and $\mathcal{A}=\{\{e\}\}$ in (\ref{ineqst}), we get that $\langle P_{/C-D}e,e\rangle\le \langle P_{-D/C}e,e\rangle$ for all $e\in E$. Inequality (\ref{ineqst}) tells us that $B^{P_{-D/C}}$ stochastically dominates $B^{P_{/C-D}}$. But if $\langle P_{/C-D}e,e\rangle=\langle P_{-D/C}e,e\rangle$ for all $e\in E$, then the distribution of $B^{P_{/C-D}}$ and $B^{P_{-D/C}}$ must be the same. Then inequality (\ref{ineqst}) gives the statement.
\end{proof}

Let $A$ be any finite set. We define the martingale $X_n$ by 

\[X_n=\mathbb{P}[A\subset B^P|B^P\restriction F_n]=\mathbb{P}[A\subset B^{P_{/B^P\cap F_n-F_n\backslash B^P}}].\] Combining the previous lemma with our assumptions on $B^P$ we get that with probability $1$ we have $\lim X_n=\mathbb{P}[A\subset B^{P_{/B^P\cap F-F\backslash B^P}}]$. On the other hand we have \[\lim X_n=\mathbb{P}[A\subset B^P|B^P\restriction F].\] The statement follows.

\end{proof}

\begin{lemma}\label{lemmakond}
Let $(G,o,P,F)$ be a unimodular random decorated \wpc\, where $P$ is an orthogonal projection with probability $1$. Then with probability $1$, we have that for any finite set $A\subset V(G)\times K$
\[\mathbb{P}(A\subset B^P|B^P\restriction F)=\mathbb{P}(A\subset B^{P_{/B^P\cap F-F\backslash B^P}}).\]
\end{lemma}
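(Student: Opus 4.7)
The plan is to reduce to Lemma~\ref{lemmakond1}. Writing $C=B^P\cap F$ and $D=F\setminus B^P$, it suffices to verify that with probability $1$,
\[
\langle P_{/C-D}\,e,e\rangle \;=\; \langle P_{-D/C}\,e,e\rangle\qquad\text{for every }e\in V(G)\times K.
\]
Once this holds, Lemma~\ref{lemmakond1} applied to (almost every) realization of $P$ yields the desired identity for the conditional probabilities.

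First I would dispose of the structural ingredients. By Theorem~\ref{permthm} the pair $(B^P,(V(G)\times K)\setminus B^P)$ is permitted almost surely, so Proposition~\ref{permprop} shows that its sub-pair $(C,D)$ is permitted as well. Because $B^P$ is determined by $P$ in a rerooting-equivariant way, the augmented random decorated \wpc\ $(G,o,P,F,B^P,C,D)$ inherits unimodularity from $(G,o,P,F)$. Hence Lemma~\ref{lemmaTr} applies and produces
\[
\Tr(G,o,P_{/C-D}) \;=\; \Tr(G,o,P_{-D/C}) \;=\; \Tr(G,o,P).
\]
On the other hand, Lemma~\ref{lemmakond2} furnishes the pointwise comparison $\langle P_{/C-D}\,e,e\rangle \le \langle P_{-D/C}\,e,e\rangle$ at every $e$, so the non-negative quantity
\[
\phi(G,o,P,F,B^P) \;:=\; \sum_{k\in K}\bigl(\langle P_{-D/C}(o,k),(o,k)\rangle - \langle P_{/C-D}(o,k),(o,k)\rangle\bigr)
\]
has zero expectation. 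Therefore $\phi=0$ almost surely; that is, the two diagonal entries agree at every $(o,k)$ with probability $1$.

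The last step, and the only mildly delicate one, is to upgrade this root equality to the corresponding equality at every vertex. I would invoke the ``Everything Shows at the Root'' principle \cite[Lemma 2.3]{ally}: applying the mass-transport identity to the non-negative bi-rooted function $f(G,o,v,P,F,B^P)=\mathbf{1}\bigl[\phi(G,v,P,F,B^P)>0\bigr]$ gives
\[
\mathbb{E}\sum_{v\in V(G)} \mathbf{1}\bigl[\phi(G,v,\ldots)>0\bigr] \;=\; \mathbb{E}\bigl[|V(G)|\cdot \mathbf{1}[\phi(G,o,\ldots)>0]\bigr] \;=\; 0,
\]
using the convention $0\cdot\infty=0$ together with the already established fact that the indicator on the right is zero almost surely. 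Thus no vertex violates the equality of diagonals with probability $1$, so the deterministic hypothesis of Lemma~\ref{lemmakond1} is satisfied for almost every realization of $(G,P,F)$, and the lemma yields the claim. The only technical subtlety swept under the rug is the measurability and rerooting-equivariance of the assignment $P\mapsto B^P$, which is standard for determinantal processes and is what legitimizes treating $B^P$ as an additional decoration inside the unimodular framework.
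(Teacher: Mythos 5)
Your proof is correct and follows essentially the same route as the paper's: the pointwise inequality from Lemma~\ref{lemmakond2}, the trace identity from Lemma~\ref{lemmaTr} (with permittedness supplied by Theorem~\ref{permthm} and Proposition~\ref{permprop}), equality of diagonal entries at the root almost surely, the ``Everything Shows at the Root'' mass-transport upgrade, and finally Lemma~\ref{lemmakond1}. You are merely more explicit than the paper about the unimodularity of the augmented decoration and the mass-transport computation, both of which the paper leaves implicit.
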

\begin{proof}
From Lemma \ref{lemmakond2}, we have that for all $e\in V(G)\times K$ we have \[\langle P_{/B^P\cap F-F\backslash B^P} e,e\rangle\le \langle P_{-F\backslash B^P/B^P\cap F}e,e\rangle.\] From Lemma \ref{lemmaTr}, we have   $\Tr(G,o,P_{/B^P\cap F-F\backslash B^P})=\Tr(G,o, P_{-F\backslash B^P/B^P\cap F})$, which imply that with probability $1$ we have 
$\langle P_{/B^P\cap F-F\backslash B^P} e,e\rangle= \langle P_{-F\backslash B^P/B^P\cap F}e,e\rangle$  for any $e\in \{o\}\times K$, but then it is true for any $e$ from unimodularity. (See \cite[Lemma 2.3 (Everything Shows at the Root)]{ally}.) Therefore, Lemma \ref{lemmakond1} can be applied to get the statement.

\end{proof}

The lemma above  establishes  Conjecture 9.1 of \cite{lydet} in  the special unimodular case. Note that this conjecture is false in general as it was pointed out to the author by Russel Lyons. Indeed, it follows from the results of Heicklen and Lyons \cite{Heicklen} that  for the WUSF on certain trees, conditioning on all edges but one does not (a.s.) give a measure
corresponding to an orthogonal projection, because the probability of the
remaining edge to be present is in~$(0, 1)$~a.s.

\subsection{Limit of conditional determinantal processes}

\begin{theorem}\label{condlimit}
Let $(G_n,o_n,P_n,C_n,D_n)$ be a convergent sequence of unimodular random decorated \wpcs\ with limit $(G,o,P,C,D)$. Assume  that $P_n$ and $P$ are orthogonal projections  and $(C_n,D_n)$ and $(C,D)$ are all permitted with probability $1$. Then $(G_n,o_n,(P_n)_{/C_n-D_n})$ converges to $(G,o,P_{/C-D})$. 
\end{theorem}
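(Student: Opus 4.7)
I will prove the theorem by truncating $(C_n,D_n)$ to finite sets, invoking Benjamini--Schramm continuity there, and passing to infinity. For each integer $R\ge 1$ and each $n$, set
\[
C_n^R := C_n\cap(B_R(G_n,o_n)\times K),\quad D_n^R := D_n\cap(B_R(G_n,o_n)\times K),
\]
and $\tilde P_n^{(R)}:=(P_n)_{/C_n^R-D_n^R}$; by Proposition~\ref{permprop} the pair $(C_n^R,D_n^R)$ is permitted a.s., so $\tilde P_n^{(R)}$ is a well-defined orthogonal projection. Define $C^R$, $D^R$, and $\tilde P^{(R)}$ analogously at the limit.

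The first step is to show that for each fixed $R$,
\[
(G_n,o_n,P_n,C_n,D_n,\tilde P_n^{(R)})\ \longrightarrow\ (G,o,P,C,D,\tilde P^{(R)})
\]
in the decorated Benjamini--Schramm sense. For $r<R$ and $e,e'$ in the $r$-ball at the root, the entry $\langle\tilde P_n^{(R)}e,e'\rangle$ is an explicit rational function of the finite matrix $P_n|_{B_R(G_n,o_n)\times B_R(G_n,o_n)}$ and the indicators $C_n^R,D_n^R$: one has $P_{\times C'}=P-PP_{[C']}M^{-1}P_{[C']}P$ with $M_{ij}=P(i,j)$, then $P_{/C'}=P_{\times C'}+P_{[C']}$, and analogous formulas hold for the $_{-D'}$ operation (apply the first to $I-P$). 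Permittedness makes both Gram matrices invertible, so this function is continuous on the permitted region. Since $(C^R,D^R)$ is a.s.\ permitted in the limit, the continuous mapping theorem (used via a Skorokhod coupling) delivers the claim.

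The second step is to show, at the limit, that $\tilde P^{(R)}\to P_{/C-D}$ in the strong operator topology as $R\to\infty$. This follows from Proposition~\ref{pertimes} applied to $P$ with $C^R\uparrow C$ and the analogous statement applied to $I-P_{/C}$ with $D^R\uparrow D$ (legitimate by Proposition~\ref{propfele}), combined via a triangle argument. Matrix entries therefore converge pointwise, and $(G,o,\tilde P^{(R)})\to(G,o,P_{/C-D})$ in Benjamini--Schramm sense as $R\to\infty$.

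To combine these into $(G_n,o_n,(P_n)_{/C_n-D_n})\to(G,o,P_{/C-D})$, by compactness pass to a subsequence along which $(G_n,o_n,(P_n)_{/C_n-D_n})$ converges to some $(G,o,Q)$ jointly with all previous data and each $\tilde P_n^{(R)}$; a standard trace argument as in the proof of Lemma~\ref{tightl} shows that $Q$ is a.s.\ an orthogonal projection. The remaining task is to identify $Q$ with $P_{/C-D}$ a.s.

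\textbf{The main obstacle} is the order of the limits in $R$ and $n$: for each fixed $n$ one has $\tilde P_n^{(R)}\to(P_n)_{/C_n-D_n}$ in SOT as $R\to\infty$, but this convergence is not a priori uniform in $n$. To overcome this, I would pass to the random subsets $B^{\cdot}$ and use Lemma~\ref{lemmakond2}. Unimodularity (via Lemma~\ref{lemmakond}) ensures that the hypothesis of that lemma holds a.s., giving $B^{\tilde P_n^{(R)}}\to B^{(P_n)_{/C_n-D_n}}$ weakly as $R\to\infty$. The stochastic sandwich in the proof of Lemma~\ref{lemmakond2} bounds $B^{\tilde P_n^{(R)}}$ between $B^{(P_n)_{-D_n/C_n}}$ and $B^{(P_n)_{/C_n-D_n}}$, and unimodularity together with the trace identity from Lemma~\ref{lemmaTr} pinches these bounds, providing the desired uniformity. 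Combined with the continuity of $\tau$ (Proposition~\ref{prop1}) and a diagonal extraction $R_n\to\infty$, this forces $B^Q=B^{P_{/C-D}}$ in Benjamini--Schramm sense; the joint BS convergence with $(P_n,C_n,D_n)$ and the rational formulas of the first step then fix the sign ambiguity in the projection kernel, yielding $Q=P_{/C-D}$ a.s.
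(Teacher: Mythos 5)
Your overall architecture --- truncate the conditioning sets to finite balls, prove continuity of the conditioning operation for finite sets (your Gram--matrix formula is a fine substitute for the paper's iterated one-element formula in Lemma \ref{lemmakorl}), extract a subsequential limit $(G,o,Q)$ by compactness, and identify $Q$ with $P_{/C-D}$ using unimodularity and a trace identity --- is the same as the paper's. But there is a genuine gap at the identification step, and it traces back to your decision to truncate $C_n$ and $D_n$ \emph{simultaneously}. The paper instead proves a one-set statement (Lemma \ref{lemma23}) and applies it twice: first to $P_n$ with $C_n$, then to $I-(P_n)_{/C_n}$ with $D_n$ (legitimized by Proposition \ref{propfele}). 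This two-stage reduction is not cosmetic. For a single increasing family $C(r)\uparrow C$ the diagonal entries $\langle P_{\times C(r)}e,e\rangle$ are \emph{monotone} in $r$ (Proposition \ref{pertimes}), so an elementary lemma on monotone double arrays gives the one-sided bound $\langle Qe,e\rangle\le\langle P_{/C}e,e\rangle$, and the trace identity $\Tr(G,o,Q)=\lim_n\Tr(G_n,o_n,(P_n)_{/C_n})=\lim_n\Tr(G_n,o_n,P_n)=\Tr(G,o,P)=\Tr(G,o,P_{/C})$ from Lemma \ref{lemmaTr2} (this is where unimodularity enters) pinches it into an equality. When you truncate $C$ and $D$ together, $\langle P_{/C^R-D^R}e,e\rangle$ is no longer monotone in $R$, so the interchange of the limits in $R$ and $n$ that you correctly flag as ``the main obstacle'' really is one, and your proposed detour through $B^{\tilde P_n^{(R)}}$ and Lemma \ref{lemmakond2} does not close it: the stochastic sandwich there bounds the truncated process between $B^{(P_n)_{-D_n/C_n}}$ and $B^{(P_n)_{/C_n-D_n}}$, both of which still involve the \emph{untruncated} infinite sets $C_n,D_n$, so their $n\to\infty$ limits cannot be read off from local data and no uniformity in $n$ results.

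Two further points. Even granting $B^{Q}\overset{d}{=}B^{P_{/C-D}}$, a determinantal measure determines a real projection kernel only up to conjugation by a diagonal sign matrix; your one-sentence claim that the rational formulas of Step 1 ``fix the sign ambiguity'' is the limit interchange in disguise, not an argument. The paper handles off-diagonal entries by a different mechanism: since $\range (P_n)_{\times C_n}\subset\range (P_n)_{\times C_n(r)}$, the difference $(P_n)_{\times C_n(r)}-(P_n)_{\times C_n}$ is itself an orthogonal projection, so off-diagonal errors are controlled by the already-established diagonal convergence via $|\langle((P_n)_{\times C_n(r)}-(P_n)_{\times C_n})e,f\rangle|\le\|((P_n)_{\times C_n(r)}-(P_n)_{\times C_n})e\|_2$. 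This nesting of ranges is another casualty of truncating $C$ and $D$ together, since the ranges of $P_{/C^R-D^R}$ are not nested in $R$. Finally, your Step 2 needs $(I-P_{/C^R})_{/D^R}\to(I-P_{/C})_{/D}$ in the strong operator topology, where both the base projection and the conditioning set vary with $R$; Proposition \ref{pertimes} covers only a fixed projection with increasing sets, so your ``triangle argument'' also needs more than you supply. The repair is simply to adopt the paper's two-stage reduction, after which each stage involves a single increasing family and your remaining steps essentially become the paper's proof.
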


This will follow from applying the next lemma twice, first for the sequence $P_n$, then for $I-(P_n)_{/C}$ with $D_n$ in place of $C_n$. At the second time we need to use Proposition \ref{propfele} to show that the conditions of the lemma are satisfied.

\begin{lemma}\label{lemma23}
Let $(G_n,o_n,P_n,C_n,D_n)$ be a convergent sequence of unimodular random decorated \wpcs\ with limit $(G,o,P,C,D)$. Assume  that $P_n$ and $P$ are orthogonal projections  and $C_n$, $C$ are all independent with probability $1$. Then $(G_n,o_n,(P_n)_{/C_n},D_n)$ converges $(G,o,P_{/C},D)$. 
\end{lemma}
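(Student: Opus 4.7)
My plan is to prove Lemma~\ref{lemma23} by identifying any weak subsequential limit $P^*$ of $(P_n)_{/C_n}$ with $P_{/C}$, exploiting structural properties of $P_{/C}$ that pass through Benjamini-Schramm limits. By compactness of the appropriate space of probability measures on decorated double \rwos\ and a diagonal coupling, it suffices to show: whenever $(G_{n_k}, o_{n_k}, P_{n_k}, C_{n_k}, (P_{n_k})_{/C_{n_k}}, D_{n_k}) \to (G, o, P, C, P^*, D)$ jointly in distribution along some subsequence, $P^* = P_{/C}$ almost surely. The $D$ decoration travels with the convergence passively.

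The key claim is that four properties of $P^*$ pin it down: (i) $P^* P_{[C]} = P_{[C]}$, (ii) $P^* \leq P + P_{[C]}$, (iii) $P^* \geq 0$, and (iv) $\Tr(G, o, P^*) = \Tr(G, o, P)$. Property (i) comes from the local identity $(P_n)_{/C_n}|_{[C_n]} = \mathrm{Id}_{[C_n]}$, which passes first to the root via convergence of matrix entries and then extends to all $(v, k) \in C$ via the mass-transport principle \cite[Lemma~2.3]{ally}. Property (ii) follows from the operator inequality $(P_n)_{/C_n} = (P_n)_{\times C_n} + P_{[C_n]} \leq P_n + P_{[C_n]}$ valid for each $n$ (the first summand being a subprojection of $P_n$). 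Property (iii) holds because each $(P_n)_{/C_n}$ is a projection and non-negativity is weakly closed. For (iv), Lemma~\ref{lemmaTr2} on the unimodular $(G_n, o_n, P_n, C_n)$ gives $\Tr(G_n, o_n, (P_n)_{/C_n}) = \Tr(G_n, o_n, P_n)$ for every $n$, and the root trace is a bounded continuous functional on random \rwos, hence passes through the BS limit.

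To conclude, set $A^* := P^* - P_{[C]}$. From (i) and (iii), $A^* \geq 0$ with $A^*|_{[C]} = 0$ and $\range A^* \subset [C]^\bot$. From (ii), $A^* \leq P$. The standard fact that $A^* \geq 0$ and $A^* \leq P$ imply $(I-P)A^*(I-P) = 0$, combined with positivity of $A^*$, forces $A^*(I-P) = 0$, so $\range A^* \subset \range P \cap [C]^\bot = \range P_{\times C}$; a direct computation using $A^* \leq P$ then yields $A^* \leq P_{\times C}$. By (iv) and Lemma~\ref{lemmaTr2} applied to the limit, $\Tr(G, o, A^*) = \Tr(G, o, P) - \Tr(G, o, P_{[C]}) = \Tr(G, o, P_{\times C})$, so $P_{\times C} - A^* \geq 0$ has vanishing expected root trace; positivity gives $(P_{\times C} - A^*)(o, k) = 0$ almost surely for each $k$, and the mass-transport principle extends this to all vertices, yielding $A^* = P_{\times C}$ and hence $P^* = P_{/C}$.

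The main obstacle I anticipate is rigorously passing the operator inequality (ii) through the Benjamini-Schramm limit, since BS convergence is only weak convergence of the distributions of local matrix entries, whereas $P^* \leq P + P_{[C]}$ is an operator-theoretic statement. My strategy is to restrict to finitely supported $v$, for which the quadratic form $\langle ((P + P_{[C]}) - P^*) v, v \rangle$ is a bounded continuous function of finitely many local matrix entries; for each such $v$, non-negativity in the limit follows from weak convergence applied to the open set where the form is strictly negative. A countable dense set of finitely supported vectors with rational coefficients, combined with continuity in $v$ (from the uniform operator-norm bound of $1$), then extends the inequality to all $v \in \ell^2(V(G) \times K)$ almost surely.
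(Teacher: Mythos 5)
Your argument is correct, and it reaches the conclusion by a genuinely different route from the paper's. The paper also reduces to identifying a joint subsequential limit $Q$ of $(P_n)_{/C_n}$ (Lemma \ref{lemma25}), and shares your two global inputs: the trace identity $\Tr(G_n,o_n,(P_n)_{/C_n})=\Tr(G_n,o_n,P_n)$ from Lemma \ref{lemmaTr2} combined with continuity of the root trace, and the ``everything shows at the root'' consequence of unimodularity. Where you diverge is in how the pointwise upper bound on the limit operator is obtained. The paper works with the finite truncations $C_n(r)=C_n\cap(B_r(G_n,o_n)\times K)$, proves convergence of $(P_n)_{\times C_n(r)}$ for fixed $r$ by an explicit Gram--Schmidt computation (Lemma \ref{lemmakorl}), and interchanges the limits in $n$ and $r$ using monotonicity of the diagonal entries; this yields only the diagonal inequality $\langle Qe,e\rangle\le\langle P_{/C}e,e\rangle$, and even after the trace argument upgrades this to diagonal equality, a further triangle-inequality estimate (exploiting that $(P_n)_{\times C_n(r)}-(P_n)_{\times C_n}$ is an orthogonal projection) is needed to recover the off-diagonal entries. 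You instead pass the per-$n$ operator inequality $(P_n)_{/C_n}=(P_n)_{\times C_n}+P_{[C_n]}\le P_n+P_{[C_n]}$ directly to the limit, which is strictly more information than a diagonal bound; combined with $P^*\ge 0$, $P^*e=e$ for $e\in C$, and the trace identity, your positivity argument ($0\le A^*\le P$ and $\range A^*\subset[C]^\bot$ force $A^*\le P_{\times C}$, and a positive operator with vanishing diagonal vanishes) delivers all matrix entries at once, avoiding both the truncation machinery and the final off-diagonal estimate. Two points worth making explicit in a full write-up: $P^*$ is self-adjoint (needed both for $A^*\ge 0$ and for $\range A^*\subset[C]^\bot$), which follows from the same entrywise convergence since each $(P_n)_{/C_n}$ is self-adjoint; and the passage of the quadratic-form inequality (ii) to the limit is cleanest in the Skorokhod coupling you already invoke (as does the paper), since the test vectors live on the random graph and the portmanteau formulation requires a slightly fussy uniformization over local patterns.
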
 
\begin{proof}
 The presence of $D_n$ does not not add any extra difficulty to the problem, so for simplicity of notation we will prove the following statement instead:

\begin{itshape}
Let $(G_n,o_n,P_n,C_n)$ be a convergent sequence of unimodular random decorated \wpcs\ with limit $(G,o,P,C)$. Assume  that $P_n$ and $P$ are orthogonal projections, $C_n$  and $C$ are all independent with probability $1$. Then $(G_n,o_n,(P_n)_{/C_n})$ converges to $(G,o,P_{/C})$.
\end{itshape}

We start by the following lemma.
\begin{lemma}\label{lemmakorl}
Let $(G_n,o_n,P_n,C_n)$ be a convergent sequence of  decorated \wpcs\ with limit $(G,o,P,C)$. Assume  that $P_n$ and $P$ are orthogonal projections, $C_n$  and $C$ are all independent, and there is an $r$ such that $C_n\subset V(B_r(G_n,o_n))\times K$ and $C\subset V(B_r(G,o))\times K$. Then $(G_n,o_n,(P_n)_{\times C_n})$ converges to $(G,o,P_{\times C})$.
\end{lemma}
\begin{proof}
Let us choose an orthogonal projection $\Pi$ from a small neighborhood $U$ of $P$. If this neighborhood is small enough, then $C$ is independent with respect to $\Pi$. For $c\in C$, we have $\Pi_{\times \{c\}} e=\Pi e -\frac{\langle \Pi e,c \rangle }{\langle \Pi c,c \rangle}\Pi c$. Indeed, clearly $\Pi e -\frac{\langle \Pi e,c \rangle }{\langle \Pi c,c \rangle}\Pi c\in \range \Pi \cap [\{c\}]^\bot$, moreover with the notation $\alpha=\frac{\langle \Pi e,c \rangle }{\langle \Pi c,c \rangle}$ for any $w\in \range \Pi \cap [\{c\}]^\bot$ we have
\[
\left\langle w, e-(\Pi e -\alpha \Pi c) \right\rangle= \left\langle w,(I-\Pi )e \right\rangle+\left\langle w, \alpha \Pi c\right\rangle=\left\langle \Pi w, \alpha c\right\rangle= \left\langle w, \alpha c\right\rangle=0.
\]
By induction we get that 
\[\Pi _{\times C}e=\Pi e-\sum_{c\in C}\alpha_{c,e} \Pi c.\] 
 Here $\alpha_{c,e}$ is a continuous function of $(\langle \Pi x,y\rangle)_{x,y\in C\cup \{e\}}$ in the neighborhood $U$. The statement can be deduced using this. 
\end{proof}

From compactness every subsequence of $(G_n,o_n,P_n,(P_n)_{/C_n},C_n)$ has a convergent subsequence, 
so it is enough to prove the following lemma.
\begin{lemma}\label{lemma25}
Let $(G_n,o_n,P_n,C_n)$ be a convergent sequence of unimodular random decorated \wpcs\ with limit $(G,o,P,C)$. Assume  that $P_n$ and $P$ are orthogonal projections, $C_n$  and $C$ are all independent with probability $1$. If $(G_n,o_n,P_n,(P_n)_{/C_n},C_n)$ converges to $(G,o,P,Q,C)$, then $(G,o,Q)$ has the same distribution as $(G,o,P_{/C})$.
\end{lemma}
\begin{proof}
Using Skorokhod's representation theorem we can find a coupling of  \break $(G_n,o_n,P_n,(P_n)_{/C_n},C_n)$ and $(G,o,P,Q,C)$ such that   $\lim_{n\to\infty} (G_n,o_n,P_n,(P_n)_{/C_n},C_n)= (G,o,P,Q,C)$ with probability $1$. By definition there is a random sequence $r_1,r_2,\dots$ such that $\lim_{n\to\infty} r_n=\infty$ with probability $1$, and there is a root preserving graph isomorphism $\psi_n$ from $B_{r_n}(G,o)$ to $B_{r_n}(G_n,o_n)$ such that $\bar{\psi}_n(C\cap(B_{r_n}(G,o)\times K))=C_n\cap(B_{r_n}(G_n,o_n)\times K)$, where $\bar{\psi}_n(v,k)=(\psi_n(v),k)$ and with probability $1$ for each $e,f\in V(G)\times K$ we have
\[\lim_{n\to\infty} \langle P_n \bar{\psi}_n e, \bar{\psi}_n f  \rangle=\langle Pe,f \rangle, \]
and
\[\lim_{n\to\infty} \langle (P_n)_{/C_n} \bar{\psi}_n e, \bar{\psi}_n f  \rangle=\langle Qe,f \rangle. \]
Of course, $\bar{\psi}_n e$ only makes sense if $n$ is large enough. 

Let us define $C_n(r)= C_n\cap (B_r(G_n,o_n)\times K)$ and $C(r)= C\cap (B_r(G,o)\times K)$. 

Lemma \ref{lemmakorl} gives us that for any $r$ we have

\begin{equation}\label{limitfixr}
\lim_{n\to\infty}  \langle (P_n)_{\times C_n(r)} \bar{\psi}_n(e),\bar{\psi}_n(f)  \rangle=\langle P_{\times C(r)} e,f   \rangle.
\end{equation}

Note that $\range P_{\times C(r)}$ is a decreasing sequence of subspaces with intersection $\range P_{\times C}$. So $P_{\times C(r)}$ converges to $P_{\times C}$ in the strong operator topology. 

In particular, for any $e,f\in V(G)\times K$, we have 
\begin{equation}\label{rtends}
\lim_{r\to\infty} \langle P_{\times C(r)} e,f  \rangle=\langle P_{\times C} e,f  \rangle, 
\end{equation}
and
\begin{equation}\label{rtends2}
\lim_{r\to\infty} \langle (P_n)_{\times C_n(r)} \bar{\psi}_n(e),\bar{\psi}_n(f)  \rangle=\langle (P_n)_{\times C_n} \bar{\psi}_n(e),\bar{\psi}_n(f)   \rangle.
\end{equation}

We need the following elementary fact.
\begin{lemma}
Let  $a(r,n)$ be non-negative real numbers, such that for any fixed $n$, the sequence $a(r,n)$ is monotone decreasing in $r$. Let  $A_n=\lim_{r\to\infty} a(r,n)$, assume that for any fixed $r$ the limit  $B_r=\lim_{n\to\infty} a(r,n)$ exists. Then $\lim_{n\to\infty } A_n\le \lim_{r\to\infty} B_r$ if these limits exist.
\end{lemma}

Note that if $e=f$ then the limits in (\ref{rtends}) and (\ref{rtends2}) are decreasing limits as we observed in Proposition \ref{pertimes}. So the previous lemma  combined with equation (\ref{limitfixr}) gives us that for any $e\in V(G)\times K$ we have
\[\lim_{n\to\infty} \langle (P_n)_{\times C_n} \bar{\psi}_n e,\bar{\psi}_n e  \rangle\le \langle P_{\times C} e,e\rangle.\]
Combining this with Proposition \ref{pertimes}, we get that
\begin{equation}\label{trineq}
\langle Qe,e \rangle=\lim_{n\to\infty} \langle (P_n)_{/ C_n} \bar{\psi}_n e,\bar{\psi}_n e  \rangle\le \langle P_{/ C} e,e\rangle.
\end{equation}
 On the other hand, from Lemma \ref{lemmaTr2}, we know that

\begin{align*}
\mathbb{E} \sum_{k\in K} \langle Q(o,k),(o,k)\rangle&=\Tr (G,o,Q) =\lim_{n\to\infty} \Tr (G_n,o_n,(P_n)_{/C_n})\\&= \lim_{n\to\infty} \Tr (G_n,o_n,P_n)=\Tr(G,o,P)=\Tr(G,o,P_{/C})\\&=\mathbb{E} \sum_{k\in K} \langle P_{/C}(o,k),(o,k)\rangle.
\end{align*}

From this and inequality (\ref{trineq}) we get that $\langle Q(o,k),(o,k) \rangle=\langle P_{/C}(o,k),(o,k) \rangle$ for all \break $k\in K$ with probability $1$, so from unimodularity (\cite[Lemma 2.3 (Everything shows at the root)]{ally}) it follows that 
\begin{equation}\label{diagOK}
\langle Qe,e \rangle=\lim_{n\to\infty} \langle (P_n)_{/ C_n} \bar{\psi}_n e,\bar{\psi}_n e  \rangle=\langle P_{/C}e,e \rangle
\end{equation}
 for all $e\in V(G)\times K$ with probability $1$.

Now we prove that with probability 1  for every $e,f\in V(G)\times K$ we have $\langle Qe,f\rangle =\langle P_{/C}e,f\rangle$. 
This is clear if $e\in C$, because in that case $P_{/C}e=Qe=e$. So assume that $e\not\in C$, then
\begin{align*}
|\langle P_{/C}e,f\rangle-\langle Q e,f\rangle|&=|\langle P_{\times C}e,f\rangle-\langle Q e,f\rangle| \\ &\le |\langle P_{\times C}e,f\rangle-\langle P_{\times C(r)} e,f\rangle|\\&\qquad+|\langle P_{\times C(r)} e,f\rangle-\langle (P_n)_{\times C_n(r)} \bar{\psi}_n e,\bar{\psi}_n f\rangle|\\&\qquad+|\langle (P_n)_{\times C_n(r)} \bar{\psi}_n e,\bar{\psi}_n f\rangle-\langle (P_n)_{\times C_n} \bar{\psi}_n e,\bar{\psi}_n f\rangle|\\&\qquad +
|\langle (P_n)_{\times C_n} \bar{\psi}_n e,\bar{\psi}_n f\rangle-\langle Q e,f\rangle|.
\end{align*}
Pick any $\varepsilon>0$. If we choose a large enough $r$, then $|\langle P_{\times C}e,f\rangle-\langle P_{\times C(r)} e,f\rangle|<\varepsilon$ 
 and $|\langle P_{\times C(r)}e,e\rangle-\langle P_{\times C}e,e\rangle|<\varepsilon$
 from equation (\ref{rtends}). Fix such an $r$. Then if $n$ is large enough $|\langle P_{\times C(r)} e,f\rangle-\langle (P_n)_{\times C_n(r)} \bar{\psi}_n e,\bar{\psi}_n f\rangle|<\varepsilon$ from equation (\ref{limitfixr}), and also $|\langle (P_n)_{\times C_n} \bar{\psi}_n e,\bar{\psi}_n f\rangle-\langle Q e,f\rangle|<\varepsilon$, because of Proposition \ref{pertimes} and the fact that $e\not\in C$. Finally, observing that $(P_n)_{\times C_n(r)}-(P_n)_{\times C_n}$ is an orthogonal  projection, we have
\begin{align*}
|\langle (P_n)_{\times C_n(r)} \bar{\psi}_n e,& \bar{\psi}_n f\rangle-\langle (P_n)_{\times C_n} \bar{\psi}_n e,\bar{\psi}_n f\rangle|\\ &\le \|(P_n)_{\times C_n(r)} \bar{\psi}_n e- (P_n)_{\times C_n} \bar{\psi}_n e\|_2\\&= 
 \sqrt{\langle (P_n)_{\times C_n(r)} \bar{\psi}_n e- (P_n)_{\times C_n} \bar{\psi}_n e,\bar{\psi}_n e\rangle}\\&\le
\Big(|\langle (P_n)_{\times C_n(r)} \bar{\psi}_n e,\bar{\psi}_n e\rangle -\langle P_{\times C(r)}e,e\rangle |\\&\qquad+|\langle P_{\times C(r)}e,e\rangle-\langle P_{\times C}e,e\rangle|\\&\qquad+ |\langle P_{\times C}e,e\rangle-\langle (P_n)_{\times C_n} \bar{\psi}_n e,\bar{\psi}_n e\rangle|\Big)^{\frac{1}{2}}
\end{align*}

Now, for a large enough $n$ we have $|\langle (P_n)_{\times C_n(r)} \bar{\psi}_n e,\bar{\psi}_n e\rangle -\langle P_{\times C(r)}e,e\rangle |<\varepsilon$ from equation (\ref{limitfixr}) and  $|\langle P_{\times C}e,e\rangle-\langle (P_n)_{\times C_n} \bar{\psi}_n e,\bar{\psi}_n e\rangle|=|\langle P_{/ C}e,e\rangle-\langle (P_n)_{/ C_n} \bar{\psi}_n e,\bar{\psi}_n e\rangle|<\varepsilon$ from line (\ref{diagOK}). Finally, $|\langle P_{\times C(r)}e,e\rangle-\langle P_{\times C}e,e\rangle|<\varepsilon$ follows from the choice of $r$. Putting everything together,
$|\langle P_{/C}e,f\rangle-\langle Q e,f\rangle|<3\varepsilon+\sqrt{3\varepsilon}$, so Lemma \ref{lemma25} follows.  
\end{proof}
This completes the proof of Lemma \ref{lemma23} and Theorem \ref{condlimit}.
\end{proof}



%

\section{The proof of Theorem \ref{MainThm}}\label{sec4}

First we observe that we may assume that $|V(G_n)|\to \infty$. If not, then we can take a large $m=m(n)$ and replace $G_n$ with $m$ disjoint copies of $G_n$, and $P_n$ with the $m$ fold direct sum of copies of $P_n$.

Let $(G,P)$ be a finite graph-positive-contraction, where $P$ is an orthogonal projection. Let $m=|V(G)\times L|$. Fix an ordering $e_1,e_2,\dots,e_{m}$ of the element of $V(G)\times L$. \break Let $E_i=\{e_1,e_2,\dots,e_i\}$. For $e\in V(G)\times L$ let $I(e)$ be the indicator of the event that $e\in B^P$.   Let $g(x)=-x\log x-(1-x)\log(1-x)$. Using the chain rule for the conditional entropy and Lemma \ref{fincond} we obtain that

\begin{align*}
h_L(G,P)&=H(I(e_1),I(e_2),\dots, I(e_m))\\&=\sum_{i=0}^{m-1} H(I(e_{i+1})|I(e_1),I(e_2),\dots,I(e_i))\\&=
\sum_{i=0}^{m-1} \sum_{C\subset E_i} \mathbb{P}[B^P\cap E_i=C] g(\mathbb{P}[e_{i+1}\in B^P|B^P\cap E_i=C])\\&=
\sum_{i=0}^{m-1} \mathbb{E} g(\mathbb{P}[e_{i+1}\in B^{P_{/(E_i\cap B^P)-(E_i\backslash B^P)}}])\\&=
\sum_{i=0}^{m-1} \mathbb{E} g(\langle {P_{/(E_i\cap B^P)-(E_i\backslash B^P)}} e_{i+1}, e_{i+1}\rangle).
\end{align*}
Here  expectation is over the random choice of $B^P$. 

Instead of a fixed ordering of $V(G)\times L$ we can choose a uniform random ordering. Taking expectation we get that
\[h_L(G,P)=\sum_{i=0}^{m-1}  \mathbb{E} g(\langle {P_{/(E_i\cap B^P)-(E_i\backslash B^P)}} e_{i+1}, e_{i+1}\rangle),\]
where  expectation is over the random choice of $E_i=\{e_1,e_2,\dots,e_i\}$ and $B^P$. Note that $g(0)=g(1)=0$, so 
\[g(\langle {P_{/(E_i\cap B^P)-(E_i\backslash B^P)}} e, e\rangle)=0
\]
whenever $e\in E_i$. Also note that $e_{i+1}$ is a uniform random element of $(V(G)\times L)\backslash E_i$. From these it follows that if $e$ is a uniform random element of $V(G)\times L$ independent of $E_i$, then
\begin{multline}\label{lelog2}
\frac{m}{m-i} \mathbb{E} g(\langle {P_{/(E_i\cap B^P)-(E_i\backslash B^P)}} e, e\rangle)= \mathbb{E} g(\langle {P_{/(E_i\cap B^P)-(E_i\backslash B^P)}} e_{i+1}, e_{i+1}\rangle)\le \log 2.
\end{multline}
Thus,

\[h_L(G,P)=\sum_{i=0}^{m-1}  \frac{m}{m-i} \mathbb{E} g(\langle {P_{/(E_i\cap B^P)-(E_i\backslash B^P)}} e, e\rangle).\]
 Let $(G,o,P)=U(G,P)$. Then
\[h_L(G,P)=\sum_{i=0}^{m-1}  \frac{m}{m-i} \frac{1}{|L|}\mathbb{E} \sum_{\ell\in L}g(\langle {P_{/(E_i\cap B^P)-(E_i\backslash B^P)}}(o,\ell), (o,\ell)\rangle).\] 
So
\[\frac{h_L(G,P)}{|V(G)|}=\sum_{i=0}^{m-1}  \frac{1}{m-i} \mathbb{E} \sum_{\ell\in L}g(\langle  {P_{/(E_i\cap B^P)-(E_i\backslash B^P)}} (o,\ell),(o,\ell)\rangle).\] 

For $t\in [0,1)$ we define 
\[H_t(G,P)=\mathbb{E} \sum_{\ell\in L}g(\langle {P_{/(E_i\cap B^P)-(E_i\backslash B^P)}}(o,\ell), (o,\ell)\rangle),\]
where $i=\lfloor tm \rfloor$, and $E_i$ is a uniform random $i$ element subset of $V(G)\times L$ independent of $B^P$ and $o$. For $i=0,1,\dots,m-1$, we have
\[
\frac{1}{m-i}\mathbb{E} \sum_{\ell\in L}g(\langle  {P_{/(E_i\cap B^P)-(E_i\backslash B^P)}}(o,\ell),(o,\ell)\rangle)=
\int_{i/m}^{(i+1)/m} \frac{m}{m-\lfloor tm\rfloor}H_t(G,P) dt.\]
Therefore
\begin{equation}\label{integ}
\frac{h_L(G,P)}{|V(G)|}=\int_0^1 \frac{m}{m-\lfloor tm\rfloor}H_t(G,P) dt.
\end{equation}

Let $m_n=|V(G_n)\times L|$. Recall that  we observed at the beginning of the proof that we may assume that $|V(G_n)|\to \infty$. So we assume this.

\begin{lemma}\label{Htlim}
Let $(G_n,P_n)$ be the sequence given in the statement of the theorem. For any $t\in [0,1)$ we have
\[\lim_{n\to\infty} H_t(G_n,P_n)=\mathbb{E} \sum_{\ell\in L}g(\langle {P_{/(E_t\cap B^P)-(E_t\backslash B^P)}} (o,\ell), (o,\ell)\rangle),\]
where $E_t$ is a Bernoulli($t$) percolation of the set $V(G)\times L$ independent of $B^P$. Consequently,
\[\lim_{n\to\infty}\frac{m_n}{m_n-\lfloor t m_n\rfloor}H_t(G_n,P_n)=\frac{1}{1-t} \mathbb{E} \sum_{\ell\in L}g(\langle {P_{/(E_t\cap B^P)-(E_t\backslash B^P)}} (o,\ell), (o,\ell)\rangle).\]
\end{lemma}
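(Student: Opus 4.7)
The plan is to express $H_t(G_n, P_n)$ as the expectation of a continuous, bounded functional evaluated on a random conditional \wpc, and then to invoke the continuity-under-conditioning result Theorem~\ref{condlimit}. More precisely, setting $i(n) = \lfloor t m_n \rfloor$ and
\[F(G', o', T') = \sum_{\ell \in L} g(\langle T'(o',\ell), (o',\ell)\rangle),\]
the functional $F$ is continuous on the compact space $\mathcal{\wpc}$ and satisfies $0 \le F \le |L| \log 2$ since $g$ is continuous on $[0,1]$ with $0 \le g \le \log 2$. By construction, $H_t(G_n, P_n)$ equals $\mathbb{E}\, F\bigl(G_n, o_n, (P_n)_{/(E_{i(n)} \cap B^{P_n}) - (E_{i(n)} \setminus B^{P_n})}\bigr)$, and the proposed limit is of the same form with $(G_n, o_n, P_n, E_{i(n)}, B^{P_n})$ replaced by $(G, o, P, E_t, B^P)$.

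The crux of the argument is the joint convergence of decorated \wpcs
\[(G_n, o_n, P_n, E_{i(n)}, B^{P_n}) \longrightarrow (G, o, P, E_t, B^P).\]
By Proposition~\ref{prop1} applied to $\tau_*$, the hypothesis $U(G_n, P_n) \to (G, o, P)$ already implies $(G_n, o_n, P_n, B^{P_n}) \to (G, o, P, B^P)$. To incorporate $E_{i(n)}$, I would use that the random subset is independent of $(P_n, B^{P_n})$, and that for every fixed radius $r$ the restriction $E_{i(n)} \cap (V(B_r(G_n, o_n)) \times L)$ is hypergeometric in a set of size $m_n$; since we may assume $m_n \to \infty$ (by the reduction at the start of Section~\ref{sec4}), this distribution converges in total variation to the independent Bernoulli($t$) product, matching the restriction of $E_t$ in the limit. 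Theorem~\ref{permthm} combined with Proposition~\ref{permprop} then ensures that $(E_{i(n)} \cap B^{P_n}, E_{i(n)} \setminus B^{P_n})$ is a permitted pair (and similarly for the limiting pair $(E_t \cap B^P, E_t \setminus B^P)$), and unimodularity of the decorated \wpcs\ follows from the finite-graph construction and passage to the limit.

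With joint convergence and the permitted-pair hypothesis in hand, Theorem~\ref{condlimit} yields
\[\bigl(G_n, o_n, (P_n)_{/(E_{i(n)} \cap B^{P_n}) - (E_{i(n)} \setminus B^{P_n})}\bigr) \longrightarrow \bigl(G, o, P_{/(E_t \cap B^P) - (E_t \setminus B^P)}\bigr).\]
Taking expectation of the bounded continuous functional $F$ gives the first claim of the lemma, and the second claim follows immediately from $m_n / (m_n - \lfloor t m_n \rfloor) \to 1/(1-t)$ for $t \in [0, 1)$. The main obstacle is the passage from a uniform $\lfloor t m_n\rfloor$-element subset on the finite side to an i.i.d.\ Bernoulli($t$) percolation on the infinite side: this is where $m_n \to \infty$ is essential, and it requires the standard hypergeometric-to-Bernoulli total-variation estimate on each bounded neighborhood. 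Everything else is a direct composition of the already-established continuity of $\tau_*$ and of the conditioning operation.
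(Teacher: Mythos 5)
Your proposal is correct and follows essentially the same route as the paper: express $H_t$ as the expectation of the continuous functional $\sum_{\ell\in L}g(\langle \cdot\,(o,\ell),(o,\ell)\rangle)$, establish joint convergence of $(G_n,o_n,P_n,E_{\lfloor t m_n\rfloor},B^{P_n})$ to $(G,o,P,E_t,B^P)$ via Proposition~\ref{prop1}, verify permittedness with Theorem~\ref{permthm} and Proposition~\ref{permprop}, and conclude with Theorem~\ref{condlimit} and weak* convergence. The only difference is that you spell out the hypergeometric-to-Bernoulli step that the paper labels ``straightforward,'' which is a welcome addition rather than a deviation.
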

\begin{proof}
From Proposition \ref{prop1} we have $(G_n,o_n,P_n,B^{P_n})\to (G,o,P,B^P)$.  It is straightforward to show that $(G_n,o_n,P_n,E_{\lfloor t m_n \rfloor})\to (G,o,P,E_t)$, here $m_n=|V(G)\times L|$ and $E_{\lfloor t m_n \rfloor}$ is a uniform $\lfloor t m_n \rfloor$ element subset of $V(G)\times L$ independent of $B^{P_n}$. Then  it follows that  $(G_n,o_n,P_n,E_{\lfloor t m_n \rfloor},B^{P_n})\to (G,o,P,E_t,B^P)$. But then with the notations $C_n=E_{\lfloor t m_n \rfloor}\cap B^{P_n}$, $C=E_t\cap B^{P}$,  $D_n=E_{\lfloor t m_n \rfloor}\backslash B^{P_n}$ and  $D=E_t\backslash B^{P}$ we have  $(G_n,o_n,P_n,C_n,D_n)\to (G,o,P,C,D)$.  It follows from Theorem \ref{permthm} and Proposition \ref{permprop}, that $(C_n,D_n)$ and $(C,D)$ are all permitted with probability $1$. It is also clear that $(G_n,o_n,P_n,C_n,D_n)$ are unimodular. Thus applying Theorem \ref{condlimit} we get that $(G_n,o_n,(P_n)_{/C_n-D_n})$ converge to $(G,o,P_{/C-D})$. We define the continuous map \break $f:\mathcal{\wpc}\to\mathbb{R}$ as $f(G,o,P) =\sum_{\ell\in L} g(\langle P(o,\ell),(o,\ell)\rangle)$. Then from the definition of weak* convergence we get that \[\lim_{n\to\infty} \mathbb{E}f(G_n,o_n,(P_n)_{/C_n-D_n})=\mathbb{E}f(G,o,P_{/C-D})\]
and this is exactly what we needed to prove.
\end{proof}

From (\ref{lelog2}) we have $\frac{m_n}{m_n-\lfloor m_n\rfloor}H_t(G,P)\le \log 2$ for any $n$ and $t$. So combining equation (\ref{integ}) and Lemma \ref{Htlim} with the dominated convergence theorem we get that
\begin{align}\label{utel}
\lim_{n\to\infty} \frac{h_L(G_n,P_n)}{|V(G_n)|} &=\lim_{n\to\infty}\int_0^1\frac{m_n}{m_n-\lfloor t m_n\rfloor}H_t(G_n,P_n) dt\\& =
\int_0^1 \lim_{n\to\infty}\frac{m_n}{m_n-\lfloor t m_n\rfloor}H_t(G_n,P_n) dt\nonumber\\  &=
\int_0^1 \frac{1}{1-t} \mathbb{E} \sum_{\ell\in L}g(\langle {P_{/(E_t\cap B^P)-(E_t\backslash B^P)}} (o,\ell), (o,\ell)\rangle) dt\nonumber
\\\intertext{$$=\int_0^1 \frac{1}{1-t} \sum_{\ell\in L} \mathbb{P}[(o,\ell) \not\in E_t]\mathbb{E} \left[g(\langle {P_{/(E_t\cap B^P)-(E_t\backslash B^P)}} (o,\ell), (o,\ell)\rangle)\big| (o,\ell)\not\in E_t\right] dt$$}\nonumber
& =
\int_0^1  \sum_{\ell\in L} \mathbb{E} \left[g(\langle {P_{/(E_t\cap B^P)-(E_t\backslash B^P)}} (o,\ell), (o,\ell)\rangle)\big| (o,\ell)\not\in E_t\right]dt.\nonumber
\end{align}
Here we used the law of total expectation and the fact that \break $g(\langle B^{P_{/(E_t\cap B^P)-(E_t\backslash B^P)}} (o,\ell), (o,\ell)\rangle)=0$  whenever $(o,\ell)\in E_t$. Let $c$ be an i.i.d. uniform $[0,1]$ labeling of $V(G)\times L$. Observe that conditioned on the event $(o,\ell)\not\in E_t$ the distribution of $E_t$ is the same as the distribution of $\{e\in V(G)\times L|c(e)<c(o,\ell)\}$ conditioned on $c(o,\ell)=t$. Let $I(e)$ be the indicator of the event $e\in B^{\rest_L P}$. From Lemma \ref{lemmakond} we get for $\ell\in L$
\begin{align*}
\int_0^1 \mathbb{E}& \left[g(\langle {P_{/(E_t\cap B^P)-(E_t\backslash B^P)}} (o,\ell), (o,\ell)\rangle)\big| (o,\ell)\not\in E_t\right]dt\\&=
\int_0^1 \mathbb{E} \left[g(\mathbb{E}(I(o,\ell)|\{I(f)|f\in E_t\}))\big| (o,\ell)\not\in E_t\right]dt\\&=\int_0^1\mathbb{E} \left[g(\mathbb{E}(I(o,\ell)|\{I(f)|c(f)<c(o,\ell)\}))\big| c(o,\ell)=t\right]dt\\&= \mathbb{E} \left[g(\mathbb{E}(I(o,\ell)|\{I(f)|c(f)<c(o,\ell)\}))\right]=\mathbb{E}\bar{h}((o,\ell),\rest_L P).
\end{align*}
Combining this with equation (\ref{utel}) we get Theorem \ref{MainThm}.

\section{Extension of Theorem \ref{MainThm} to positive contractions}\label{sec5}
To state the extension of Theorem \ref{MainThm} we need another tightness notion. 
 Let $K_0\supset K$ be finite. A random \wpc\ $(G_0,o_0,T_0)$ with support set $K_0$ is called an \emph{$K_0$-extension} of the random \wpc\ $(G,o,T)$ with support set $K$, if $(G_0,o_0,\rest_K(T_0))$ has the same distribution as $(G,o,T)$. We say that the extension is \emph{tight} if $T_0$ is an orthogonal projection with probability $1$. A finite graph-positive-contraction $(G_0,T_0)$ with support set $K_0$ is called an \emph{$K_0$-extension} of the finite graph-positive-contraction $(G,T)$ with support set $K$, if $G=G_0$ and $\rest_K T_0=T$. We say that the extension is \emph{tight}, if $T_0$ is an orthogonal projection.  

Given a sequence of finite graph-positive-contractions $(G_n,T_n)$ with support $K$ and a random \wpc\ $(G,o,T)$ with support set $K$, we say that $\lim U(G_n,T_n)=(G,o,T)$ \emph{p-tightly}, if there is a finite $K_0\supset K$ and there are tight $K_0$-extensions $(G_n,P_n)$ of $(G_n,T_n)$ and a tight $K_0$-extension $(G,o,P)$ of $(G,o,T)$  such that $\lim U(G_n,P_n)=(G,o,P)$.

With these definitions we have the following extension of Theorem \ref{MainThm}.
\begin{theorem}
Let $(G_n,T_n)$ be a sequence of finite graph-positive-contractions such that  $\lim U(G_n,T_n)=(G,o,T)$ p-tightly for some random \wpc\  $(G,o,T)$. Then
\[\lim_{n\to\infty} \frac{h_L(G_n,T_n)}{|V(G_n)|}=\bar{h}_L(G,o,T).\] 
\end{theorem}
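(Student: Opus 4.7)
The plan is to reduce the claim directly to Theorem~\ref{MainThm} via the tight $K_0$-extensions guaranteed by p-tightness. By the definition of p-tight convergence, there is a finite set $K_0\supset K$ together with tight $K_0$-extensions $(G_n,P_n)$ of $(G_n,T_n)$ and a tight $K_0$-extension $(G,o,P)$ of $(G,o,T)$ such that $\lim U(G_n,P_n)=(G,o,P)$. Tightness of the extensions means $P_n$ is an orthogonal projection for every $n$ and $P$ is an orthogonal projection with probability $1$. Hence Theorem~\ref{MainThm} applies (note $L\subset K\subset K_0$) and gives
\[
\lim_{n\to\infty}\frac{h_L(G_n,P_n)}{|V(G_n)|}=\bar{h}(G,o,\rest_L(P)).
\]

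Next I would prove two small identities that turn the right-hand side into $\bar{h}_L(G,o,T)=\bar{h}(G,o,\rest_L(T))$ and the left-hand side into $h_L(G_n,T_n)/|V(G_n)|$. First, I would observe the operator identity $\rest_L(P)=\rest_L(T)$: since $L\subset K$, the orthogonal projections onto $\ell^2(V\times L)$ and $\ell^2(V\times K)$ satisfy $P_L P_K = P_L$, and combining this with $T=\rest_K(P)=P_K P\restriction_{\ell^2(V\times K)}$ yields
\[
\rest_L(T)=P_L T\restriction_{\ell^2(V\times L)}=P_L P_K P\restriction_{\ell^2(V\times L)}=P_L P\restriction_{\ell^2(V\times L)}=\rest_L(P).
\]
Second, I would use the standard marginalization property of determinantal processes with projection kernels: for any $F\subset V(G_n)\times K_0$, the random set $B^{P_n}\cap F$ is determinantal with kernel $\rest_F(P_n)$. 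Applied with $F=V(G_n)\times K$ this gives $B^{P_n}\cap(V(G_n)\times K)\stackrel{d}{=}B^{T_n}$, and then intersecting with $V(G_n)\times L$ (using $L\subset K$) yields $B^{P_n}\cap(V(G_n)\times L)\stackrel{d}{=}B^{T_n}\cap(V(G_n)\times L)$. In particular their Shannon entropies coincide, so $h_L(G_n,P_n)=h_L(G_n,T_n)$.

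Combining the two identities with the displayed limit above yields
\[
\lim_{n\to\infty}\frac{h_L(G_n,T_n)}{|V(G_n)|}=\lim_{n\to\infty}\frac{h_L(G_n,P_n)}{|V(G_n)|}=\bar{h}(G,o,\rest_L(P))=\bar{h}(G,o,\rest_L(T))=\bar{h}_L(G,o,T),
\]
which is the desired conclusion. I do not expect any serious obstacle: the whole argument is a bookkeeping reduction once Theorem~\ref{MainThm} is in hand. The only point requiring a moment of care is the marginalization property for projection-kernel determinantal processes, but this is standard (it is immediate from the defining formula $\mathbb{P}[F\subset B^P]=\det(\langle Pe,f\rangle)_{e,f\in F}$ when one restricts $F$ to subsets of $V(G_n)\times K$) and does not rely on any of the unimodular machinery developed earlier in the paper.
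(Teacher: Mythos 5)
Your proposal is correct and follows essentially the same route as the paper: invoke the tight $K_0$-extensions from the definition of p-tightness, apply Theorem~\ref{MainThm} to $(G_n,P_n)$, and transfer both sides back via the marginalization identity $B^{T_n}\stackrel{d}{=}B^{P_n}\cap(V(G_n)\times K)$ and the corresponding equality $\bar{h}_L(G,o,T)=\bar{h}_L(G,o,P)$. The extra details you supply (the operator identity $\rest_L(P)=\rest_L(T)$ and the explicit justification of marginalization) are correct and merely make explicit what the paper states without proof.
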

\begin{proof}
By the definition of tight convergence, there is a finite $K_0\supset K$ and there are tight $K_0$-extensions $(G_n,P_n)$ of $(G_n,T_n)$ and a tight $K_0$-extension $(G,o,P)$  of $(G,o,T)$ such that $\lim U(G_n,P_n)=(G,o,P)$. Note that the distribution of $B^{T_n}$ is the same as \break $B^{P_n}\cap (V(G)\times K)$. So $h_L(G_n,T_n)=h_L(G_n,P_n)$. Similarly, $\bar{h}_L(G,o,T)=\bar{h}_L(G,o,P)$. So from Theorem \ref{MainThm}
\[\lim_{n\to\infty} \frac{h_L(G_n,T_n)}{|V(G_n)|}=\lim_{n\to\infty} \frac{h_L(G_n,P_n)}{|V(G_n)|}=\bar{h}_L(G,o,P) =\bar{h}_L(G,o,T).\] 

\end{proof}

We do not know whether the condition of p-tightness can be replaced with tightness in the theorem above.

Later we will need the following proposition.
\begin{prop}
Let $K\subset K_0$, such that $|K_0|=2|K|$. Any finite graph-positive-contraction  $(G,T)$ has a tight $K_0$-extension $(G,P)$. 
\end{prop}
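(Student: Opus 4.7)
The plan is to use the standard $2 \times 2$ block dilation of a positive contraction to a projection, leveraging the hypothesis $|K_0 \setminus K| = |K|$ in a crucial way. First I would fix any bijection $\phi: K_0 \setminus K \to K$; together with the inclusion $K \hookrightarrow K_0$, this induces a unitary isomorphism
\[
\ell^2(V(G)\times K_0) \;\cong\; \ell^2(V(G)\times K) \oplus \ell^2(V(G)\times K),
\]
so every operator on $\ell^2(V(G)\times K_0)$ can be represented as a $2 \times 2$ block matrix with entries in $B(\ell^2(V(G)\times K))$.

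With this identification, I would simply define
\[
P \;=\; \begin{pmatrix} T & S \\ S & I-T \end{pmatrix}, \qquad S \;=\; \bigl(T(I-T)\bigr)^{1/2},
\]
where the positive square root is defined by functional calculus. This is legitimate because $0 \le T \le I$ implies that $T(I-T) = T - T^2$ is a positive self-adjoint operator, so has a positive self-adjoint square root.

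Then I would check that $P$ is an orthogonal projection. Self-adjointness is immediate from the block form. For $P^2 = P$, the three distinct block identities to verify are
\[
T^2 + S^2 = T, \qquad TS + S(I-T) = S, \qquad S^2 + (I-T)^2 = I-T,
\]
all of which follow routinely from $S^2 = T - T^2$ and the fact that $S$, being a function of $T$, commutes with both $T$ and $I-T$. Finally, the $K$-$K$ block of $P$ is $T$ by construction, so $\rest_K P = T$, and hence $(G,P)$ is a tight $K_0$-extension of $(G,T)$.

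There is no real obstacle: the only structural input is the assumption $|K_0| = 2|K|$, which is precisely what allows the off-diagonal block to be an operator on $\ell^2(V(G)\times K)$ and so to accommodate $S$. (In particular, if $|K_0 \setminus K| < |K|$ this construction could fail, which is why the factor of $2$ appears in the hypothesis.)
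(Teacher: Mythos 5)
Your construction is exactly the paper's: the paper also takes the $2\times 2$ block dilation $P=\begin{pmatrix} T & q(T)\\ q(T) & I-T\end{pmatrix}$ with $q(T)=\sqrt{T(I-T)}$ defined by functional calculus, identifying $\ell^2(V(G)\times K_0)$ with two copies of $\ell^2(V(G)\times K)$. Your verification of $P^2=P$ via the block identities is correct and merely fills in details the paper leaves to the reader.
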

\begin{proof}
This is well known, see for example \cite[Chapter 9]{lydet}. We include the proof for the reader's convenience. Let $q(x)=\sqrt{x(1-x)}$ on the interval $[0,1]$ and $0$ otherwise. Using functional calculus we can define $q(T)$, for every positive contraction.  Then the block matrix
\[P=\begin{pmatrix}
T&q(T)\\
q(T)&I-T
\end{pmatrix}\]
gives the desired operator.
\end{proof}

The $K_0$-extension given in the previous lemma will be called the \emph{standard $K_0$-extension} of $(G,T)$. The  standard $K_0$-extension of  a random \wpc\ is defined in the analogous way. 

\section{Sofic entropy: The proof of Theorem \ref{SoficThm}}\label{sec6}

\if \sver 1
Note that for any graph $G$ the set of random $\{0,1\}^K$ colorings of $V(G)$ can be identified with the set of random subsets of $V(G)\times K$. In this proof we use the random subset terminology.

As we mentioned in Subsection \ref{subsecmain}, the inequality $h'(B^T)\le \bar{h}(G_\Gamma,e_\Gamma,T)$ is well known, but  we give the proof for completeness. 

Let $G$ be a graph, and $F$ be a random subset of $V(G)\times K$.  
Let $c$ 	be a $[0,1]$ labeling of $V(G)\times K$. For $e\in V(G)\times K$ let $I(e)$ be the indicator of the event that $e\in F$.
  For $(v,k)\in V(G)\times K$ we define
 
\[\bar{h}((v,k),c,F)=H(I(v,k)|\{I(v',k')|c(v',k')<c(v,k)\}).\]

We also define
\[\bar{h}((v,k),F)=\mathbb{E}\bar{h}((v,k),c,F),\]
where $c$ is an i.i.d. uniform $[0,1]$ labeling of $V(G)\times K$. 

Moreover, if $r$ is an integer, then we define
\[
\bar{h}_r((v,k),c,F)=H(I(v,k)|\{I(v',k')|c(v',k')<c(v,k)\text{ and }(v',k')\in B_r(G,v)\times K\})
\]
and   
\[\bar{h}_r((v,k),F)=\mathbb{E}\bar{h}_r((v,k),c,F),\]
where $c$ is an i.i.d. uniform $[0,1]$ labeling of $V(G)\times K$. 

Comparing these definitions with the definitions given in Subsection \ref{subsecmain}, we see that if $F=B^T$ for some positive contraction $T$, then $\bar{h}((v,k),F)=\bar{h}((v,k),T)$. Thus, it is justified the use the same symbol in both cases. 

If $c$ is a $[0,1]$-labeling such that the labels are pairwise distinct and $G$ is finite, the chain rule of conditional entropy gives us
\[H(F)=\sum_{(v,k)\in V(G)\times K} \bar{h}((v,k),c,F).\]
 Taking expectation over $c$ we get that
 \[H(F)=\sum_{(v,k)\in V(G)\times K} \bar{h}((v,k),F).\]
 Or, alternatively,
\[\frac{H(F)}{|V(G)|}=\mathbb{E}\sum_{k\in K} \bar{h}((o,k),F)\]
 where $o$ is a uniform random vertex of $V(G)$. 
 
Combining this with the well known monotonicity properties of conditional entropies, for any integer $r$, we have
\begin{equation*}
\frac{H(F)}{|V(G)|}=\mathbb{E}\sum_{k\in K} \bar{h}((o,k),F)\le \mathbb{E}\sum_{k\in K} \bar{h}_r((o,k),F).
\end{equation*}

Note that $\bar{h}_r((o,k),F)$ only depends on the distribution of $F\cap (B_r(G,o)\times K)$. Therefore, if $F$ is an  $(\varepsilon,r)$ approximation, then we have
\[\frac{H(F)}{|V(G)|}\le\mathbb{E}\sum_{k\in K} \bar{h}_r((o,k),F)\le \sum_{k\in K} \bar{h}_r((e_\Gamma,k),B^T)+\eta_r(\varepsilon) ,\]
where $\eta_r(\varepsilon)$ does not depend on $G$, and $\eta_r(\varepsilon)\to 0$ as $\varepsilon\to 0$. 
In particular, \[H(\varepsilon,r)\le \sum_{k\in K} \bar{h}_r((e_\Gamma,k),B^T)+\eta_r(\varepsilon)\] tending to $0$ with $\varepsilon$ we obtain that \[\inf_{\varepsilon} H(\varepsilon,r)\le \sum_{k\in K} \bar{h}_r((e_\Gamma,k),B^T).\] But we have
\[\lim_{r\to\infty} \sum_{k\in K} \bar{h}_r((e_\Gamma,k),B^T)=\sum_{k\in K} \bar{h}((e_\Gamma,k),B^T).\] 
Thus tending to infinity with $r$ we get \[h'(B^T)\le \sum_{k\in K} \bar{h}((e_\Gamma,k),B^T)=\bar{h}(G_\Gamma,e_\Gamma,T).\]

Now let $G_1,G_2,\dots$ be a sequence of finite $S$-labeled Schreier  graphs Benjamini-Schramm converging to $(G_\Gamma,e_\Gamma)$. Let $K\subset K_0$, such that $|K_0|=2|K|$. Let $P$ be the standard $K_0$-extension of $T$. Then it is clear that $P$ is an invariant operator on $\ell^2(V(G_\Gamma)\times K_0)$. 

\begin{lemma}
There is a sequence of positive contractions $R_n$ on $\ell^2(V(G_n)\times K_0)$ such that $\lim_{n\to\infty} U(G_n,R_n)=(G_\Gamma,e_\Gamma,P)$. Moreover, the spectral measures $\mu_n=\mu_{U(G_n,R_n)}$ weakly converge to $\mu=\mu_{(G_\Gamma,e_\Gamma,P)}=|K|(\delta_0+\delta_1)$. 
\end{lemma}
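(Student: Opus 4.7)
The plan is to construct $R_n$ in two stages. \emph{First}, build positive contractions $T_n$ on $\ell^2(V(G_n)\times K)$ with $U(G_n,T_n)\to(G_\Gamma,e_\Gamma,T)$. \emph{Second}, set
\[
R_n:=\begin{pmatrix} T_n & q(T_n)\\ q(T_n) & I-T_n\end{pmatrix}
\]
on $\ell^2(V(G_n)\times K_0)$, mirroring the standard $K_0$-extension of $T$ to $P$. Since $0\le T_n\le I$, the same algebraic identity that shows $P^2=P$ also shows that $R_n$ is an orthogonal projection. By continuity of the functional calculus under Benjamini--Schramm convergence, applied to the continuous function $q$ on $[0,1]$, this gives $U(G_n,R_n)\to(G_\Gamma,e_\Gamma,P)$.

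For the first stage I would combine the soficity hypothesis with Kaplansky density. The operator $T$, being an invariant self-adjoint contraction on $\ell^2(\Gamma\times K)$, lies in the group von Neumann algebra of $\Gamma$ tensored with $M_{|K|}(\mathbb{C})$. Kaplansky density yields self-adjoint polynomials
\[
\tilde T_\ell=\sum_{\gamma\in F_\ell}a_\gamma\otimes\lambda(\gamma),
\]
with $F_\ell\subset\Gamma$ finite and $\|\tilde T_\ell\|\le 1$, such that $\tilde T_\ell\to T$ strongly. I would transport $\tilde T_\ell$ to $\ell^2(V(G_n)\times K)$ by picking, for each $\gamma\in F_\ell$, a fixed $S$-word representing $\gamma$ and replacing $\lambda(\gamma)$ by the corresponding composition of Schreier-edge permutations of $V(G_n)$; call the result $\tilde T_\ell^{(n)}$. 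Its operator norm is bounded for each fixed $\ell$, uniformly in $n$, by $\sum_{\gamma\in F_\ell}\|a_\gamma\|$. Since $G_n\to(G_\Gamma,e_\Gamma)$ as $S$-labelled Schreier graphs, for each fixed $\ell$ we have $U(G_n,\tilde T_\ell^{(n)})\to(G_\Gamma,e_\Gamma,\tilde T_\ell)$. A diagonal argument selects $\ell(n)\to\infty$ with $U(G_n,\tilde T_{\ell(n)}^{(n)})\to(G_\Gamma,e_\Gamma,T)$. Applying the truncation $g(x)=\max(0,\min(1,x))$ via functional calculus produces positive contractions $T_n:=g(\tilde T_{\ell(n)}^{(n)})$; since $g(T)=T$, another appeal to continuity of the functional calculus gives $U(G_n,T_n)\to(G_\Gamma,e_\Gamma,T)$.

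The spectral-measure assertion is then automatic. Each $R_n$ is a projection, so $\mu_n$ is supported on $\{0,1\}$ with $\mu_n(\{0\})+\mu_n(\{1\})=|K_0|=2|K|$; Benjamini--Schramm convergence gives $\mu_n(\{1\})=\Tr(U(G_n,R_n))\to\Tr(G_\Gamma,e_\Gamma,P)=|K|$, hence $\mu_n\to|K|(\delta_0+\delta_1)$ weakly. I expect the main obstacle to be the continuity of the functional calculus under Benjamini--Schramm convergence: given a uniform operator-norm bound on the self-adjoint operators in play, the statement reduces via Stone--Weierstrass to the polynomial case, which is immediate from the locality of matrix products; but one must arrange the diagonalisation so that $\|\tilde T_{\ell(n)}^{(n)}\|$ stays bounded (or at least that the spectral mass escaping a fixed bounded interval vanishes in the limit), so that polynomial approximations of $g$ and $q$ on that fixed interval are sufficient.
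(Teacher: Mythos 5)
Your strategy (Kaplansky density in the group von Neumann algebra, transport along the Schreier labelling, truncation, then the block formula with $q(x)=\sqrt{x(1-x)}$) is genuinely different from the paper's proof, which constructs nothing: it metrizes the conjunction of local convergence and weak convergence of spectral measures, argues by contradiction along a subsequence, and invokes Lyons--Thom \cite{lyth} (their Proposition 4.4, Lemma 4.7 and Remark 4.3) as a black box, converting ultralimits to limits by passing to subsequences. If completed, your route would make this step self-contained, and your observation that $R_n$ can be taken to be an actual orthogonal projection would let one skip the subsequent $w(R_n)$-correction.

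There is, however, a gap exactly where you suspect one, and it is not a technicality: \emph{continuity of the functional calculus under Benjamini--Schramm convergence is false} without extra hypotheses. The paper's own Section 2 example gives a counterexample already for $x\mapsto x^2$: on the edgeless graph, $\tfrac12 I$ and a projection all of whose diagonal entries equal $\tfrac12$ have identical local statistics, yet their squares do not, because $\langle A^2e,f\rangle$ is a sum over the whole space while local convergence only controls entries inside balls. What saves the polynomial case is \emph{uniformly bounded propagation}, and what upgrades polynomials to continuous functions is a \emph{uniform norm bound} (so that polynomial approximation on a fixed interval is uniform over $n$). Your ordering destroys both: after diagonalising, $\tilde T_{\ell(n)}^{(n)}$ has norm bounded only by $\sum_{\gamma\in F_{\ell(n)}}\|a_\gamma\|\to\infty$ and propagation tending to infinity, so the first appeal (for $g$) is unsupported; and $T_n=g(\tilde T_{\ell(n)}^{(n)})$ has no finite propagation at all, so the second appeal (for $q(T_n)$) is unsupported even though $\|T_n\|\le1$. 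The repair is to reorder: use the positive-part version of Kaplansky density so that $0\le\tilde T_\ell\le I$ (hence $g(\tilde T_\ell)=\tilde T_\ell$ and $q(g(\tilde T_\ell))=q(\tilde T_\ell)$), and for each \emph{fixed} $\ell$ apply $g$, $q\circ g$ and the block construction to $\tilde T_\ell^{(n)}$, whose propagation $\rho_\ell$ and norm $C_\ell$ are uniform in $n$; then $U(G_n,R_{n,\ell})\to(G_\Gamma,e_\Gamma,P_\ell)$ as $n\to\infty$ with $P_\ell$ the standard extension of $\tilde T_\ell$, and since $P_\ell\to P$ strongly, a diagonal choice $\ell(n)\to\infty$ applied to the already-assembled projections $R_{n,\ell}$ finishes the proof; your trace computation for the spectral measures then goes through verbatim.
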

\begin{proof}
One can easily define a metric $d'$ on $\mathcal{P}(\mathcal{\wpc})$ such that for any \break sequence of positive contractions $R_n$ on $\ell^2(V(G_n)\times K_0)$, we have that \break $\lim_{n\to\infty} d'(U(G_n,R_n),(G_\Gamma,e_\Gamma,P))=0$ if and only if $\lim_{n\to\infty} U(G_n,R_n)=(G_\Gamma,e_\Gamma,P)$ and $\mu_n$ weakly converge to $\mu$.

Thus if the required sequence does not exist, then there is an $\varepsilon>0$ and an infinite sequence $n_1<n_2<\dots$ such that $d'(U(G_{n_i},R_{n_i}),(G_\Gamma,e_\Gamma,P))\ge \varepsilon$ for any $i$ and any positive contractions $R_{n_i}$ on $\ell^2(V(G_{n_i})\times K_0)$.

We will now use the results of Lyons and Thom \cite{lyth}. In their paper they are using ultralimits. However, by passing to a subsequence we may replace ultralimits by actual limits. Thus \cite[Proposition 4.4, Lemma 4.7 and Remark 4.3]{lyth} provide us a subsequence $(m_i)$ of $(n_i)$ and positive contractions $R_{m_i}$ on $\ell^2(V(G_{m_i})\times K_0)$, such that $\lim_{i\to\infty} U(G_{m_i},R_{m_i})=(G_\Gamma,e_\Gamma,P)$ and $\mu_{m_i}$ weakly converge to $\mu$. Indeed, \cite[Proposition 4.4]{lyth} gives us the convergence $\lim_{i\to\infty} U(G_{m_i},R_{m_i})=(G_\Gamma,e_\Gamma,P)$ and \cite[Proposition 4.7]{lyth} is used to make sure $R_{m_i}$ is indeed a positive contraction. Finally, the convergence of spectral measures follows from \cite[Remark 4.3]{lyth}.

Then $\lim_{i\to\infty} d'(U(G_{m_i},R_{m_i}),(G_\Gamma,e_\Gamma,P))=0$, which contradicts to the choice of the subsequence $(n_i)$.

Finally, observe that \[\Tr(G_\Gamma,e_\Gamma,P)=\Tr(G_\Gamma,e_\Gamma,T)+\Tr(G_\Gamma,e_\Gamma,I-T)=|K|,\] so the spectral measure $\mu$ is indeed equal to $|K|(\delta_0+\delta_1)$.
\end{proof}

Note that $R_n$ is not necessary an orthogonal projection. Now we modify $R_n$ slightly to get an orthogonal projection. Let us define 
\[
w(x)=\begin{cases}
x&\text{ for }0\le x<\frac{1}{2},\\
x-1&\text{ for }\frac{1}{2}\le x\le 1
\end{cases}
\]

Note that $w$ is not continuous, but $w^2$ is continuous. Let $(v_i)_{i=1}^{|V(G_n)\times K_0|}$ be an orthonormal basis of $\ell^2(V(G_n)\times K_0)$ consisting of  eigenvectors of $R_n$, such that $R_n v_i=\lambda_i v_i$. Let $w(R_n)$ be the unique operator, such that $w(R_n)v_i= w(\lambda_i)v_i$ for $i=1,2,\dots,|V(G_n)\times K_0|$. 

Then $P_n=R_n-w(R_n)$ will be the orthogonal projection to the span of $\{v_i|\lambda_i\ge \frac{1}2\}$. Moreover,

\begin{align}\label{repr}
\lim_{n\to\infty} \mathbb{E}\sum_{k\in K_0} \|w(R_n)(o,k)\|_2^2&=\lim_{n\to\infty} \mathbb{E}\sum_{k\in K_0} \langle w(R_n)^2 (o,k),(o,k) \rangle\\&=\lim_{n\to\infty} \int_0^1 w^2 d\mu_n=\int_0^1 w^2 d\mu\nonumber \\&=|K|(w^2(0)+w^2(1))=0\nonumber
\end{align}
Here the expectation is over a uniform random vertex $o$ of $V(G_n)$. 
This easily implies that  $U(G_n,R_n)$ and $U(G_n,P_n)$ have the same limit, that is, $\lim U(G_n,P_n)=(G_\Gamma,e_\Gamma,P)$. (Note that in the language of \cite{lyth} the vanishing limit in (\ref{repr}) means that $(R_n)$ and $(P_n)$ represent the same operator.) Now using Theorem \ref{MainThm} we get that

\begin{align*}\lim_{n\to\infty} \frac{H(B^{\rest_K (P_n)})}{|V(G_n)|}&=\lim_{n\to\infty} h_K(G_n,P_n)\\&=\bar{h}(G_\Gamma,e_\Gamma,\rest_K(P))=\bar{h}(G_\Gamma,e_\Gamma,T).
\end{align*}

Now for any $\varepsilon$ and $r$ for large enough $n$ we have that $B^{\rest_K (P_n)}$ is an $(\varepsilon,r)$-approximation of $B^T$, because  $\lim_{n\to\infty} U(G_n,\rest(P_n))=(G_\Gamma,e_\Gamma,T)$. So $\bar{h}(G_\Gamma,e_\Gamma,T)\le h(B^T)$ follows.

Putting everything together we get that $\bar{h}(G_\Gamma,e_\Gamma,T)\le h(B^T)\le h'(B^T)\le\bar{h}(G_\Gamma,e_\Gamma,T)$. So Theorem \ref{SoficThm} follows.
\else

Recall that for any graphs $G$ the set of random $\{0,1\}^K$ colorings of $V(G)$ can be identified with the set of random subsets of $V(G)\times K$. In this proof we use the random subset terminology.

As we mentioned in Subsection \ref{subsecmain} the inequality $h'(B^T)\le \bar{h}(Q,o_Q,T)$ is well known, but for completeness we give the proof anyway. 

Let $G$ be a graph, and $F$ be a random subset of $V(G)\times K$.  
Let $c$ 	be a $[0,1]$ labeling of $V(G)\times K$. For $e\in V(G)\times K$ let $I(e)$ be the indicator of the event that $e\in F$.
  For $(v,k)\in V(G)\times K$ we define
 
\[\bar{h}((v,k),c,F)=H(I(v,k)|\{I(v',k')|c(v',k')<c(v,k)\}).\]

Moreover, we define
\[\bar{h}((v,k),F)=\mathbb{E}\bar{h}((v,k),c,F),\]
where $c$ is an i.i.d. uniform $[0,1]$ labeling of $V(G)\times K$. 

Moreover, if $r$ is an integer we define
\[
\bar{h}_r((v,k),c,F)=H(I(v,k)|\{I(v',k')|c(v',k')<c(v,k)\text{ and }(v',k')\in B_r(G,v)\times K\})
\]
and   
\[\bar{h}_r((v,k),F)=\mathbb{E}\bar{h}_r((v,k),c,F),\]
where $c$ is an i.i.d. uniform $[0,1]$ labeling of $V(G)\times K$. 

Comparing these definitions with the definitions given in Subsection \ref{subsecmain}, we see that if $F=B^T$ for some positive contraction $T$, then $\bar{h}((v,k),F)=\bar{h}((v,k),T)$. Thus, it is justified the use the same symbol in both cases. 

If $c$ is a $[0,1]$-labeling such that the labels are pairwise distinct and $G$ is finite, the chain rule of conditional entropy gives us
\[H(F)=\sum_{(v,k)\in V(G)\times K} \bar{h}((v,k),c,F).\]
 Taking expectation we get that
 \[H(F)=\sum_{(v,k)\in V(G)\times K} \bar{h}((v,k),F).\]
 Or alternatively,
\[\frac{H(F)}{|V(G)|}=\mathbb{E}\sum_{k\in K} \bar{h}((o,k),F)\]
 where $o$ is a uniform random vertex of $V(G)$. 
 
Combining this with the well known monotonicity properties of conditional entropies we get that
\begin{equation}\label{ineqfixr}
\frac{H(F)}{|V(G)|}=\mathbb{E}\sum_{k\in K} \bar{h}((o,k),F)\le \mathbb{E}\sum_{k\in K} \bar{h}_r((o,k),F).
\end{equation}

Fix an $r$ and consider a finite graph $G$  and a random subset $F$ of $V(G)\times K$. Assume that $F$ is an $(\varepsilon,r)$ approximation of $B^T$, then from (\ref{ineqfixr})
\[\frac{H(F)}{|V(G)|}\le \mathbb{E}\sum_{k\in K} \bar{h}_r((o,k),F).\]
Note that $\bar{h}_r((o,k),F)$ only depends on the distribution of $F\cap (B_r(G,o)\times K)$, so from the fact that $F$ is an  $(\varepsilon,r)$ approximation it follows that
\[\frac{H(F)}{|V(G)|}\le\mathbb{E}\sum_{k\in K} \bar{h}_r((o,k),F)\le \sum_{k\in K} \bar{h}_r((o_Q,k),B^T)+\eta_r(\varepsilon) ,\]
where $\eta_r(\varepsilon)$ does not depend on $G$, and $\eta_r(\varepsilon)\to 0$ as $\varepsilon\to 0$. 
In particular, \[H(\varepsilon,r)\le \sum_{k\in K} \bar{h}_r((o_Q,k),B^T)+\eta_r(\varepsilon)\] tending to $0$ with $\varepsilon$ we obtain that \[\inf_{\varepsilon} H(\varepsilon,r)\le \sum_{k\in K} \bar{h}_r((o_Q,k),B^T).\] But we have
\[\lim_{r\to\infty} \sum_{k\in K} \bar{h}_r((o_Q,k),B^T)=\sum_{k\in K} \bar{h}((o_Q,k),B^T).\] 
Thus tending to infinity with $r$ we get \[h'(B^T)\le \sum_{k\in K} \bar{h}((o_Q,k),B^T)=\bar{h}(Q,o_Q,T).\]

Now let $G_1,G_2,\dots$ be a sequence of finite graphs Benjamini-Schramm converging to $(Q,o_Q)$. Let $K\subset K_0$, such that $|K_0|=2|K|$. And let $P$ be the standard $K_0$ extension of $T$. Then it is clear that $T$ is an invariant operator on $\ell^2(V(Q)\times K_0)$. Then it follows from the results of Lyons and Thom \cite{lyth} that there is a sequence of positive contractions $R_n$ on $\ell^2(V(G_n)\times K_0)$ such that $\lim_{n\to\infty} U(G_n,R_n)=(Q,o_Q,P)$. Moreover, the spectral measures $\mu_n=\mu_{U(G_n,R_n)}$ converge weakly to $\mu=\mu_{(Q,o_Q,P)}=|K|(\delta_0+\delta_1)$. Note that $R_n$ is not necessary an orthogonal projection. Now we modify $R_n$ slightly to get an orthogonal projection. Let us define 
\[
w(x)=\begin{cases}
x&\text{ for }0\le x<\frac{1}{2},\\
x-1&\text{ for }\frac{1}{2}\le x\le 1
\end{cases}
\]

Note that $w$ is not continuous, but $w^2$ is continuous. Let $(v_i)_{i=1}^{|V(G_n)\times K_0|}$ be a orthonormal basis of $\ell^2(V(G_n)\times K_0)$ consisting of  eigenvectors of $R_n$, such that $R_n v_i=\lambda_i v_i$. Let $w(R_n)$ be the unique operator, such that $w(R_n)v_i= w(\lambda_i)v_i$ for $i=1,2,\dots,|V(G_n)\times K_0|$. 

Then $P_n=R_n-w(R_n)$ will be the orthogonal projection to the span of $\{v_i|\lambda_i\ge \frac{1}2\}$. Moreover,

\begin{multline*}
\lim_{n\to\infty} \mathbb{E}\sum_{k\in K_0} \|w(R_n)(o,k)\|_2^2=\lim_{n\to\infty} \mathbb{E}\sum_{k\in K_0} \langle w(R_n)^2 (o,k),(o,k) \rangle=\lim_{n\to\infty} \int_0^1 w^2 d\mu_n=\int_0^1 w^2 d\mu=\\|K|(w^2(0)+w^2(1))=0
\end{multline*}
Here the expectation is over a uniform random vertex $o$ of $V(G_n)$. 
This easily implies that if $U(G_n,R_n)$ and $U(G_n,P_n)$ has the same limit, that is, $\lim U(G_n,P_n)=(Q,o_Q,P)$. Now using Theorem \ref{MainThm} we get that

\[\lim_{n\to\infty} \frac{H(B^{\rest_K (P_n)})}{|V(G_n)|}=\lim_{n\to\infty} h_K(G_n,P_n)=\bar{h}(Q,o_Q,\rest_K(P))=\bar{h}(Q,o_Q,T).\]

But for any $\varepsilon$ and $r$ for large enough $n$ we have that $B^{\rest_K (P_n)}$ is an $(\varepsilon,r)$-approximation of $B^T$, because  $\lim_{n\to\infty} U(G_n,\rest(P_n))=(Q,o_Q,T)$. Then $\bar{h}(Q,o_Q,T)\le h(B^T)$ follows.

Putting everything together we get that $\bar{h}(Q,o_Q,T)\le h(B^T)\le h'(B^T)\le\bar{h}(Q,o_Q,T)$.
\fi

\section{Tree entropy}\label{sec7}

Let $G=(V,E)$ be a locally finite connected graph. Choose an orientation of each edge to obtain the oriented graph $\vec{G}$. The \emph{vertex-edge incidence matrix} $A=(a_{ve})$ of $\vec{G}$ is a  $V\times E$ matrix such that
\[
a_{ve}=
\begin{cases}
1&\text{if $e$ enters $v$,}\\
-1&\text{if $e$ leaves $v$,}\\
0&\text{otherwise.}
\end{cases}
\] 

Let $\bigstar=\bigstar(\vec{G})$ be the closed subspace of $\ell^2(E)$ generated by the rows of $A$, 
and let $P_\bigstar$ be the orthogonal projection from $\ell^2(E)$ to $\bigstar$. If $G$ is finite, then the determinantal measure 
corresponding to $P_{\bigstar}$ is the uniform measure on the spanning trees of $G$ \cite{trans}. Let $\tau (G)$ be the number of spanning trees of 
$G$, then $H(B^{P_{\bigstar}})=\log \tau(G)$. If $G$ is infinite, the corresponding determinantal measure is the so-called \emph{wired uniform 
spanning forest}(WUSF) \cite{wusf1,wusf2,wusf3,wusf4}. Note that  in both cases, the resulting measure does not depend on the chosen orientation of $G$.

Given a rooted graph $(G,o)$  and a non-negative integer $k$, let $p_k(G,o)$ be the probability that a simple random walk starting at $o$ is back at $o$ after $k$ steps.  

The following theorem was proved by Lyons \cite{treeent}.
\begin{theorem}
Let $G_n$ be a sequence of finite connected graphs, such that $|V(G_n)|\to\infty$ and their Benjamini-Schramm limit is a random rooted graph $(G,o)$. Then
\[\lim_{n\to\infty} \frac{\log\tau(G_n)}{|V(G_n)|}=\mathbb{E}\left(\log \deg(o)-\sum_{k=1}^{\infty}\frac{1}{k} p_k(G,o)\right).\] 
\end{theorem}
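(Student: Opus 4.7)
The plan is to apply Theorem \ref{MainThm_t} to the sequence $(L(G_n), P_\bigstar(G_n))$ of line graphs equipped with their transfer-current matrices. Since the uniform spanning tree measure is the determinantal measure associated with $P_\bigstar$, we have $\log \tau(G_n) = H(B^{P_\bigstar(G_n)})$, so the existence of the limit reduces to checking Benjamini--Schramm convergence and tightness for this sequence.

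For Benjamini--Schramm convergence of $(L(G_n), P_\bigstar(G_n))$, the line-graph part is a standard consequence of the Benjamini--Schramm convergence of the $G_n$, since a neighborhood of a uniformly chosen edge in $L(G_n)$ is determined by the local structure of $G_n$ around the two endpoints of that edge. For the matrix entries, I would use that if $(G_n, o_n) \to (G, o)$ in the Benjamini--Schramm sense, then for any fixed pair of edges $e_1, e_2$ near the root, $\langle P_\bigstar(G_n) e_1, e_2\rangle \to \langle P_\bigstar(G) e_1, e_2\rangle$, where $P_\bigstar(G)$ is the WUSF projection on the limit. This is the standard exhaustion statement for transfer currents that underlies the construction of WUSF. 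For tightness, Lemma \ref{tightl} applies immediately: $P_\bigstar(G_n)$ is an orthogonal projection by construction, the limit $P_\bigstar(G)$ is also an orthogonal projection by construction, and $\nu_n(\{\infty\}) = 0$ since each finite line graph $L(G_n)$ is connected.

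Theorem \ref{MainThm_t} then yields
\[\lim_{n\to\infty} \frac{\log\tau(G_n)}{|E(G_n)|} = \bar{h}(L(G), e, P_\bigstar(G)),\]
and combined with $|E(G_n)|/|V(G_n)| \to \tfrac{1}{2}\mathbb{E}[\deg(o)]$ (from Benjamini--Schramm convergence), this establishes existence of the limit $\lim \log\tau(G_n)/|V(G_n)|$, albeit in the form of our own formula rather than Lyons's.

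To identify the right-hand side with the specific expression $\mathbb{E}[\log\deg(o)-\sum_{k\geq 1} p_k(G,o)/k]$ stated in the theorem, I would run a parallel spectral argument. The matrix-tree theorem gives $\log\tau(G_n) = -\log|V(G_n)| + \sum_{i \geq 2}\log\lambda_i^{(n)}$ where the $\lambda_i^{(n)}$ are eigenvalues of the combinatorial Laplacian $L_n = D_n - A_n$. Writing $L_n = D_n^{1/2}(I - M_n)D_n^{1/2}$ with $M_n = D_n^{-1/2}A_n D_n^{-1/2}$ the normalized adjacency operator, and expanding $-\log(1-x) = \sum_{k \geq 1} x^k/k$ together with $\operatorname{tr}(M_n^k) = \sum_v p_k(G_n, v)$, one obtains
\[\frac{\log\tau(G_n)}{|V(G_n)|} = \mathbb{E}_{v \sim U(V_n)}[\log\deg(v)] - \sum_{k\geq 1}\frac{1}{k}\,\mathbb{E}_{v \sim U(V_n)}[p_k(G_n,v)] + o(1),\]
and passing to the Benjamini--Schramm limit term-by-term in $k$ gives the stated formula. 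The main obstacle is in this last step: interchanging the $k$-sum with the $n \to \infty$ limit requires uniform control of the tail $\sum_{k > K}\mathbb{E}_v[p_k(G_n,v)]/k$, which can be delicate near the spectral endpoints $\pm 1$ (bipartite behavior at $-1$ and the isolated zero eigenvalue of $L_n$ corresponding to $+1$). Standard remedies -- passing to the lazy walk and using a uniform integrability argument for the spectral measures of $M_n$, as in Lyons's original proof -- handle both issues and complete the identification.
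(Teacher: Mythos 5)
Your first half retraces what the paper actually does with this statement, but be aware that the paper never proves Lyons's formula at all: the identity with $\mathbb{E}\bigl(\log\deg(o)-\sum_{k\ge1}\tfrac1k p_k(G,o)\bigr)$ is quoted from \cite{treeent}, and what Section \ref{sec7} extracts from Theorem \ref{MainThm} is only the existence of the limit together with a \emph{different} expression, $\tfrac12\mathbb{E}\sum_{e\sim o}\bar h(G,e)$; the agreement of the two expressions beyond Benjamini--Schramm limits of finite graphs is even posed as an open question there. So your plan splits into a part that coincides with the paper's route (apply Theorem \ref{MainThm_t}, or Theorem \ref{MainThm} directly since the WUSF limit is a.s.\ a projection, to the line graphs decorated with transfer-current projections, with convergence justified along the lines of \cite[Proposition 7.1]{ally}) and a part the paper does not attempt. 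In the first part you gloss over one genuine point: the entries $\langle P_\bigstar e_1,e_2\rangle$ depend on a choice of edge orientations (only through signs, but the signs are part of the RGO data), so "the standard exhaustion statement for transfer currents" does not by itself give convergence of the decorated line graphs; the paper handles this by orienting the limit randomly and arguing (via a concentration argument, details omitted) that suitable orientations of the $G_n$ exist. Also note that what Theorem \ref{MainThm} produces is $\bar h$ of the \emph{degree-biased} rooted line graph, which after multiplying by $\lim|E(G_n)|/|V(G_n)|=\tfrac12\mathbb{E}\deg(o)$ unbiases to the paper's formula.

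The second part --- matrix-tree theorem, expansion of $\log\det$ of the normalized Laplacian in return probabilities, and passage to the limit --- is not a different route so much as a compressed rerun of Lyons's original proof, and the step you flag (uniform control of $\sum_{k>K}\tfrac1k\mathbb{E}_v p_k(G_n,v)$, i.e.\ uniform integrability of $\log$ at the spectral endpoints, plus removing the eigenvalue $1$ of $M_n$, whose naive contribution diverges and must be cancelled against the $-\log|V_n|$ from the matrix-tree theorem) is precisely the nontrivial content of \cite{treeent}. Delegating it to "standard remedies as in Lyons's original proof" makes your identification of the constant a citation rather than a proof; that is legitimate --- the paper does exactly the same by citing \cite{treeent} --- but then the spectral sketch adds nothing beyond the citation, and if you intend it as a self-contained argument the uniform-integrability step is a real gap that you must fill.
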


Using our results we can give another expression for the limiting quantity. Let $G$ be a connected locally finite infinite  graph, let $\mathfrak{F}$ be the WUSF of $G$. For $e\in E(G)$ let $I(e)$ be the indicator of the event that $e\in \mathfrak{F}$. Given a $[0,1]$ labeling $c$ of $E(G)$ and an edge $e\in E(G)$ we define
\[\bar{h}(G,e,c)=H(I(e)|\{I(f)|c(f)<c(e)\}),\]
and
\[\bar{h}(G,e)=\mathbb{E} \bar{h}(G,e,c),\]
where the expectation is over the i.i.d. uniform random $[0,1]$ labeling of $G$. Now we state our version of the tree entropy theorem.
\begin{theorem}
Let $G_n$ be a sequence of finite connected graphs, such that $|V(G_n)|\to\infty$ and their Benjamini-Schramm limit is a random rooted graph $(G,o)$. Then
\[\lim_{n\to\infty} \frac{\log\tau(G_n)}{|V(G_n)|}=\frac{1}{2}\mathbb{E}\sum_{e\sim o} \bar{h}(G,e),\] 
where the summation is over the  edges $e$ incident to the root $o$. 
\end{theorem}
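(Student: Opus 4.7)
The plan is to apply Theorem~\ref{MainThm_t} to the sequence of finite graph-positive-contractions $(L(G_n), P_{\bigstar}(G_n))$, where $L(G_n)$ is the line graph of $G_n$ and $P_{\bigstar}(G_n)$ is the transfer-current matrix. Since $B^{P_{\bigstar}(G_n)}$ is distributed as the uniform spanning tree of $G_n$, its Shannon entropy equals $\log\tau(G_n)$, and since $|V(L(G_n))|=|E(G_n)|$, the theorem will yield
\[
\lim_{n\to\infty}\frac{\log\tau(G_n)}{|E(G_n)|} \;=\; \bar h\bigl(L(G),\tilde e,P_{\bigstar}(G)\bigr),
\]
once its hypotheses are verified.

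The first step is to check that $(L(G_n), P_{\bigstar}(G_n))$ is Benjamini--Schramm convergent and tight. Tightness is straightforward: $P_{\bigstar}$ is an orthogonal projection, so for each edge $e$ one has $\sum_{f}|P_{\bigstar}(e,f)|^{2}=\langle P_{\bigstar}e,e\rangle\le 1$, and classical transfer-current estimates combined with bounded degree give that the tail $\sum_{f:d(e,f)>R}|P_{\bigstar}(e,f)|^{2}$ decays uniformly in $n$. Benjamini--Schramm convergence follows because $P_{\bigstar}(G_n)(e,f)$ for edges near a root is well approximated by local quantities on $r$-balls in $G_n$ (via Wilson's algorithm and the loop-erased random walk representation of the transfer current), and these local approximations converge along the Benjamini--Schramm convergent sequence $G_n\to(G,o)$. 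The limit rooted graph-operator is $(L(G),\tilde e,P_{\bigstar}(G))$, where $P_{\bigstar}(G)$ is the transfer current of the WUSF on $G$ and $\tilde e$ is a random edge whose law is obtained by size-biasing $(G,o)$ by $\deg(o)$ and then choosing $\tilde e$ uniformly among the edges incident to $o$; unimodularity of the limit makes this choice canonical.

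Granting Theorem~\ref{MainThm_t}, the remainder is bookkeeping. Benjamini--Schramm convergence gives $|E(G_n)|/|V(G_n)|\to\frac{1}{2}\mathbb{E}\deg(o)$, so
\[
\lim_{n\to\infty}\frac{\log\tau(G_n)}{|V(G_n)|}
\;=\;\frac{\mathbb{E}\deg(o)}{2}\cdot\mathbb{E}\,\bar h\bigl(\tilde e,P_{\bigstar}(G)\bigr).
\]
Undoing the size-biasing, for any measurable functional $\varphi$ of rooted edge-operators one has
\[
\mathbb{E}\,\varphi(\tilde e) \;=\; \frac{1}{\mathbb{E}\deg(o)}\,\mathbb{E}\sum_{e\ni o}\varphi(e),
\]
and applying this with $\varphi(e)=\bar h(e,P_{\bigstar}(G))=\bar h(G,e)$, the two factors of $\mathbb{E}\deg(o)$ cancel, leaving exactly $\frac{1}{2}\,\mathbb{E}\sum_{e\sim o}\bar h(G,e)$.

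The main obstacle is the verification of Benjamini--Schramm convergence of the operator sequence $(L(G_n),P_{\bigstar}(G_n))$: Benjamini--Schramm convergence of $G_n$ is a purely combinatorial, local statement, but $P_{\bigstar}(G_n)$ is a globally defined matrix, so one must quantify how well its entries between nearby edges are determined by a fixed finite radius around them. This reduces, via the loop-erased random walk representation of the transfer current, to uniform locality of simple random walk on bounded-degree graphs, and is precisely the technical content behind the excerpt's remark that the convergence and tightness of $(L(G_n),P_{\bigstar}(G_n))$ are ``easy to see''.
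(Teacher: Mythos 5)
Your overall strategy is the paper's: feed $(L(G_n),P_{\bigstar}(G_n))$ into the main theorem, identify the limit as the WUSF transfer current on a degree-biased root-edge, and undo the size-biasing to get $\frac{1}{2}\mathbb{E}\sum_{e\sim o}\bar h(G,e)$. The bookkeeping at the end (the factor $|E(G_n)|/|V(G_n)|\to\frac{1}{2}\mathbb{E}\deg(o)$ cancelling against the de-biasing) is correct. However, there are two concrete problems with the way you verify the hypotheses.

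First, the tightness argument is not a proof. There is no ``classical transfer-current estimate'' giving a decay of $\sum_{f:d(e,f)>R}|P_{\bigstar}(e,f)|^{2}$ that is uniform over all bounded-degree graphs; the rate at which transfer-current mass localizes depends heavily on the geometry (compare lattices with expanders), and uniformity in $n$ is exactly the nontrivial content of tightness, not something to be quoted. The clean route --- and the one the paper takes --- is to avoid verifying tightness directly: establish Benjamini--Schramm convergence of $(L(G_n),P_{\bigstar}(G_n))$ to the WUSF transfer current, observe that the limit operator is an orthogonal projection with probability $1$, and apply Theorem~\ref{MainThm} (equivalently, invoke the implication ii)$\Rightarrow$i) of Lemma~\ref{tightl} to recover tightness a posteriori). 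As written, your tightness step is essentially circular: the uniform tail decay you assert is equivalent, given convergence, to the limit being a projection.

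Second, you ignore the orientation of the edges. The measure $\eta_{P_{\bigstar}}$ does not depend on the orientation, but the matrix $P_{\bigstar}$ does: flipping the orientation of an edge flips the signs of the corresponding row and column. Benjamini--Schramm convergence of $(L(G_n),P_{\bigstar}(G_n))$ as rooted graph-operators is a statement about the local joint distribution of the signed entries, so one must choose orientations of the $G_n$ compatibly with an orientation structure on the limit. The paper handles this by orienting each edge of the limit $(G,o)$ independently and uniformly, and then producing (via a concentration argument) orientations $\vec G_n$ of $G_n$ whose Benjamini--Schramm limit is this randomly oriented $(\vec G,o)$; only then does the convergence of the operators, obtained by adapting \cite[Proposition 7.1]{ally}, make sense. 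Without some such device your claimed convergence of $(L(G_n),P_{\bigstar}(G_n))$ is not even well posed.
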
 
\begin{proof}
Let $(\vec{G},o)$ be the random rooted oriented graph obtained from $(G,o)$ by orienting each edge independently and uniformly to one of the two possible directions. Let $L(\vec{G})$ be the line graph of $\vec{G}$, that is the vertex set of $L(\vec{G})$ is $V(\vec{G})$ and two vertices of $L(G)$ are connected if the corresponding edges in $\vec{G}$ are adjacent. Let $(\vec{G}',o')$ be obtained from $(\vec{G},o)$ by biasing by the degree of the root. Let $e$ be a uniform random edge incident to $o'$. Then $(L(\vec{G}'),e,P_{\bigstar(\vec{G}')})$ will be a random \wpc, which we denote by $(L,e,P)$. (Here the support set $K$ of $(L,e,P)$ is a one element set.) Now there is an orientation $\vec{G}_n$ of $G_n$ such that the Benjamini-Schramm limit of $\vec{G}_n$ is $(\vec{G},o)$. This can be proved by choosing random orientations, and using  concentration results. We omit the details. Let $(L_n,P_n)$ be the finite-graph-contraction $(L(\vec{G}_n),P_{\bigstar(\vec{G}_n)})$. We have the following lemma.
\begin{lemma}
We have $\lim_{n\to\infty}U(L_n,P_n)=(L,e,P)$.
\end{lemma}
\begin{proof}
This can be proved by slightly modifying the argument of the proof of \break\cite[Proposition 7.1]{ally}. 
\end{proof}    
The proof can be finished using Theorem \ref{MainThm}.

\end{proof}

Both Lyons's and our theorem can be extended to edge weighted graphs, but in this case they are about two different quantities. However, these two quantities are closely related as we explain now. Let $G$ be a connected finite graph, and assume that each edge $e$ has a positive weight  $w(e)$. The weight of a spanning tree $T$ is defined as $w(T)=\prod_{e\in T} w(T)$. Let \[Z(G,w)=\sum_{T\text{ is a spanning tree}} w(T)\]
be the  sum of the weights of the spanning
trees of $G$. Let $\mathfrak{F}$ be a random spanning tree of $G$, such that for any spanning tree $T$ we have $\mathbb{P}(\mathfrak{F}=T)=Z(G,w)^{-1} w(T)$. This is again a determinantal process, the only difference compared to the uniform case is that for each edge $e$ we need to multiply the corresponding column of the vertex-edge incidence matrix by $\sqrt{w(e)}$. In fact, this is the way we define the weighted version of the WUSF for infinite graphs. The Shannon entropy $H(\mathfrak{F})$ of $\mathfrak{F}$ is related to $Z(G,w)$ by the identity
\begin{equation}\label{energyentropy}
H(\mathfrak{F})=\log Z(G,w)-\mathbb{E}\log w(\mathfrak{F}).
\end{equation}

Let $(G_n,w_n)$ be a Benjamini-Schramm convergent sequence of weighted connected graphs, such that $|V(G_n)|\to\infty$ and their Benjamini-Schramm limit is a random rooted weighted graph  $(G,o,w)$. Assume that the weights are uniformly bounded away from zero and infinity, that is, there are $0<C_1<C_2<\infty$ such that all the weight are from the interval $[C_1,C_2]$. Then the generalization of Lyons's theorem states that
\[\lim_{n\to\infty} \frac{\log Z(G_n,w_n)}{|V(G_n)|}=\mathbb{E}\left(\log \pi(o)-\sum_{k=1}^{\infty}\frac{1}{k} p_{k,w}(G,o)\right),\]
where $\pi(v)$ is total weight of the edges incident to $v$, and  $p_{k,w}(G,o)$ is defined using the random walk with transition probabilities $p(x,y)=\pi(x)^{-1} w(xy)$ instead of the simple random walk. On the other hand our theorem states that
\[\lim_{n\to\infty} \frac{H(\mathfrak{F}_n)}{|V(G_n)|}=\frac{1}{2}\mathbb{E}\sum_{e\sim o} \bar{h}(G,e,w),\] 
where $\bar{h}(G,e,w)$ is defined as above, but using the weighted version of the WUSF.

These two statements above together  with equation  (\ref{energyentropy}) of course imply that \break $\lim_{n\to\infty} |V(G_n)|^{-1} \mathbb{E}\log w(\mathfrak{F}_n)$ exists. However, there is a more direct proof. It is based on the observation that
\begin{align*}
\frac{\mathbb{E}\log w(\mathfrak{F}_n)}{|V(G_n)|}&=\frac{1}{|V(G_n)|}\sum_{e\in E(G_n)} \mathbb{P}(e\in \mathfrak{F}_n)\log w(e)\\&=\frac{1}{2}\mathbb{E} \sum_{e\sim o} \mathbb{P}(e\in \mathfrak{F}_n)\log w(e),
\end{align*}
where the last expectation is  over a uniform random $o\in V(G_n)$.  
Since we know that the limit of $\mathfrak{F}_n$ is $\mathfrak{F}$, where $\mathfrak{F}$ is the WUSF of the random rooted weighted graph $(G,o,w)$ (see \cite[Proposition 7.1]{ally}) we get that
\[\lim_{n\to\infty} \frac{\mathbb{E}\log w(\mathfrak{F}_n)}{|V(G_n)|}=\frac{1}{2} \mathbb{E} \sum_{e\sim o} \mathbb{P}(e\in \mathfrak{F})\log w(e).\]

Using equation (\ref{energyentropy}), this provides us another formula for the limit  \break $\lim_{n\to\infty}  |V(G_n)|^{-1} \log Z(G_n,w_n)$. Namely,
\[\lim_{n\to\infty} \frac{\log Z(G_n,w_n)}{|V(G_n)|}=\frac{1}{2} \mathbb{E} \sum_{e\sim o} \left(\mathbb{P}(e\in \mathfrak{F})\log w(e)+\bar{h}(G,e,w) \right).\]

\begin{question}
We have seen that if $(G,o)$ is an infinite random rooted graph which is the limit of finite connected graphs, then
\[\mathbb{E}\left(\log \deg(o)-\sum_{k=1}^{\infty}\frac{1}{k} p_k(G,o)\right)  =\frac{1}{2}\mathbb{E}\sum_{e\sim o} \bar{h}(G,e).\]
Is this true for any infinite unimodular random rooted graph?
\end{question}

\begin{appendix}
\section*{Measurability of the polar decomposition}\label{appn} 

The key idea is that we can realize every operator on a single fixed Hilbert-space. This can be done by using the canonical representatives defined by Aldous and Lyons in a slightly different setting. See Section 2 of \cite{ally}. Now we give the details. Let us call a \rwo\ $(G,o,T)$ half-canonical if the following hold. If $G$ is finite then $V(G)=[0,|V(G)|-1]\subset\mathbb{N}$, if $G$ is infinite then $V(G)=\mathbb{N}$, the root $o$ is equal to $0$, moreover $B_r(G,o)=[0,|B_r(G,o)|-1]$ for all $r$. If $G$ is infinite then $T$ is an operator on the Hilbert space $H=\ell^2(\mathbb{N}\times K)$. If $G$ is finite we will still consider $T$ as an operator on $H$ by setting $T(v,k)$ to be $0$ for all $(v,k)\in (\mathbb{N}\backslash V(G))\times K$. Let $\mathcal{HC}$ be the set of half-canonical \rwo s. We endow $\mathcal{HC}$ with a metric as follows. Given two elements $(G_1,0,T_1)$ and $(G_2,0,T_2)$ of $\mathcal{HC}$, their distance is defined as the infimum of $\varepsilon>0$ such that for $r=\lfloor\varepsilon^{-1}\rfloor$ we have that $G_1$ and $G_2$ are the same restricted to the vertices  $[0,r]$, moreover 
\begin{equation*}
|\langle T_1(v,k), (v',k')\rangle-\langle T_2(v,k), (v',k')\rangle|<\varepsilon
\end{equation*} 
for every $v,v'\in [0,r]$ and $k,k'\in K$. Then the obvious map $g:\mathcal{HC}\to\mathcal{\rwo}$ is continuous. The next lemma shows that we can go the other direction too.
\begin{lemma}
There is a measurable map from $f:\mathcal{\rwo}\to\mathcal{HC}$ such that for any $(G,o,T)\in \mathcal{\rwo}$ we have that $(G,o,T)$ and $f(G,o,T)$ are isomorphic as \rwo s. In other words $g\circ f=\text{id}$. 
\end{lemma}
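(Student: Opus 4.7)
The plan is to appeal to a classical Borel selection theorem rather than build the canonical labeling by hand. First, I would endow both $\mathcal{\rwo}$ and $\mathcal{HC}$ with standard Borel structures. Since the paper already observes that $\mathcal{\rwo}(B)$ is a compact metric space, the decomposition $\mathcal{\rwo}=\bigcup_{n\ge 1}\mathcal{\rwo}(n)$ exhibits $\mathcal{\rwo}$ as a countable union of Polish spaces, hence as a standard Borel space. The analogous decomposition works for $\mathcal{HC}$ with each piece Polish: a Cauchy sequence in the given metric stabilizes on every finite initial segment of $\mathbb{N}$ and the matrix entries $\langle T_n(v,k),(v',k')\rangle$ converge, the norm bound being preserved in the limit. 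The map $g:\mathcal{HC}\to\mathcal{\rwo}$ is continuous directly from the two metrics, hence Borel.

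Second, I would verify that every fiber $g^{-1}([G,o,T])$ is at most countable. A half-canonical representative of $(G,o,T)$ is precisely a bijection $\phi\colon V(G)\to\mathbb{N}$ (or onto $\{0,\ldots,|V(G)|-1\}$) with $\phi(o)=0$ and $\phi(B_r(G,o))=[0,|B_r(G,o)|-1]$ for every $r$; one then transports $T$ to $\ell^2(\mathbb{N}\times K)$. Such a $\phi$ is determined by the choice, at each BFS level $r$, of a bijection between the finite set $\{v:d_G(o,v)=r\}$ and the integer interval $[|B_{r-1}(G,o)|,|B_r(G,o)|-1]$. The degree bound $D$ forces each level to be finite, so the number of admissible $\phi$'s is a countable product of finite factors, hence countable.

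Third, the graph $\Gamma=\{(u,x)\in\mathcal{\rwo}\times\mathcal{HC}\,:\,g(x)=u\}$ is Borel, as the graph of a Borel map between standard Borel spaces, and by the previous step all vertical sections $\Gamma_u=g^{-1}(u)$ are countable. The Lusin--Novikov uniformization theorem (see, e.g., Kechris, \emph{Classical Descriptive Set Theory}, Theorem~18.10) states that any Borel subset of a product of standard Borel spaces whose vertical sections are countable admits a Borel uniformization. Applied to $\Gamma$, this produces a Borel, hence measurable, map $f:\mathcal{\rwo}\to\mathcal{HC}$ with $g\circ f=\mathrm{id}$; by the definition of $g$, this exactly says that $f(G,o,T)$ is a half-canonical representative of the isomorphism class of $(G,o,T)$, which is what the lemma asks for.

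The only genuinely new content in this argument is countability of the fibers, which is immediate from the degree bound. The main conceptual obstacle, if one avoided Lusin--Novikov, would be to construct $f$ explicitly via a canonical BFS labeling: enumerate vertices level by level, and at each step pick the next unlabeled vertex by lexicographically minimizing a ``signature'' built from the entries $\langle T(w,k),(v_i,k')\rangle$ against already-labeled vertices $v_0,\ldots,v_{n-1}$, with some deterministic tie-breaking. Such a direct construction is feasible but requires handling orbits of the automorphism group of each BFS ball; the Lusin--Novikov route sidesteps this combinatorics entirely.
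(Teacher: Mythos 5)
Your reduction to a selection theorem founders on the countability claim in your second step, and this is a genuine gap rather than a fixable slip. A countably infinite product of finite sets each of size at least $2$ has cardinality $2^{\aleph_0}$, not $\aleph_0$, so ``a countable product of finite factors, hence countable'' is false. Concretely, take $G=\mathbb{Z}$ rooted at $0$ (two rays from the root, degree bound $D\ge 2$) with, say, $T=0$. Every sphere $S_r=\{v:d(o,v)=r\}$ has two elements for $r\ge 1$, and an admissible labeling makes an independent binary choice at every level; the resulting labeled graphs on $\mathbb{N}$ are pairwise distinct for continuum many of these choices (the induced perfect matchings between $\{2r-1,2r\}$ and $\{2r+1,2r+2\}$ can be prescribed independently at each level), and all of them are isomorphic to $(\mathbb{Z},0,0)$. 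Hence the fiber $g^{-1}([\mathbb{Z},0,0])$ has the cardinality of the continuum, the vertical sections of $\Gamma$ are not countable, and Lusin--Novikov does not apply. The same phenomenon occurs for essentially every infinite graph whose spheres are not eventually singletons, so this is the generic case, not a degenerate one.

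If you want to keep the descriptive-set-theoretic framing, you would have to retreat to a selection theorem that tolerates uncountable sections: the fibers $g^{-1}(u)$ are closed (since $g$ is continuous and points are closed), so Kuratowski--Ryll-Nardzewski is the natural candidate, but weak measurability of $u\mapsto g^{-1}(u)$ requires $g(U)$ to be measurable for open $U$, and $g(U)$ is a priori only analytic; Jankov--von Neumann likewise yields only a universally measurable uniformization. Either route delivers less than a Borel map and costs you the verification you were trying to avoid. The paper instead takes the route you relegate to your last paragraph: it cites the explicit canonical BFS labeling of Aldous and Lyons (Section 2 of their paper), where the next vertex at each stage is selected by a deterministic rule built from the graph structure and the operator entries against already-labeled vertices, with the tie-breaking arranged so that the resulting representative depends only on the isomorphism class and the assignment is Borel by construction. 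That explicit construction is the content of the lemma; the uniformization machinery does not substitute for it here.
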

\begin{proof}
The construction given in Section 2 of \cite{ally} can be adopted to this situation. 
\end{proof} 

Let $B(H)$ be the set of bounded linear operators on $H$. We endow $B(H)$ with a measurable structure by considering the coarsest $\sigma$-algebra such that all the $B(H)\to \mathbb{R}$ maps $T\mapsto \langle Te,f\rangle$ are measurable for $e,f\in \mathbb{N}\times K$. We also endow $H$ with the measurable structure coming from the norm $\|\cdot \|_2$. 

\begin{lemma}
Let $T$ be a $B(H)$-valued measurable map, $x$ be an $H$-valued measurable map defined on the same measurable space. Then $Tx$ is an $H$-valued measurable map. 
\end{lemma}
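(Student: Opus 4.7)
The plan is to reduce $H$-valued measurability to measurability of all coordinate functions with respect to the standard orthonormal basis of $H = \ell^2(\mathbb{N}\times K)$, and then verify measurability coordinate by coordinate via a countable approximation argument using the orthonormal expansion of $x(\omega)$.

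First I would record the following general fact: for $H$ a separable Hilbert space with a distinguished countable orthonormal basis $\{e_i\}$, a map $y: X\to H$ is Borel measurable (with respect to the norm topology) if and only if each coordinate function $\omega\mapsto \langle y(\omega),e_i\rangle$ is measurable. One direction is immediate from continuity of inner products with fixed vectors; the other follows from the identity
\[
\|y(\omega)-y_0\|^2 \;=\; \sum_i \bigl(\langle y(\omega),e_i\rangle - \langle y_0,e_i\rangle\bigr)^2,
\]
which exhibits $\omega\mapsto \|y(\omega)-y_0\|^2$ as a countable pointwise limit of measurable functions, so that the preimage of every open ball in $H$ is measurable. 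In particular, to conclude that $Tx$ is $H$-valued measurable it suffices to show that $\omega\mapsto \langle T(\omega)x(\omega),e\rangle$ is scalar-measurable for every basis vector $e\in\mathbb{N}\times K$.

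For the main step, I would expand $x(\omega)$ in the orthonormal basis, $x(\omega)=\sum_f \langle x(\omega),f\rangle f$ (norm convergent), and use the fact that $T(\omega)$ is a bounded and hence norm-continuous linear operator to obtain $T(\omega)x(\omega)=\sum_f \langle x(\omega),f\rangle\, T(\omega)f$ in norm. Taking the inner product with a fixed basis vector $e$ and using continuity of $\langle\cdot,e\rangle$,
\[
\langle T(\omega)x(\omega),e\rangle \;=\; \lim_{n\to\infty} \sum_{f\in F_n} \langle x(\omega),f\rangle\,\langle T(\omega)f,e\rangle,
\]
where $F_1\subset F_2\subset\cdots$ is any exhausting sequence of finite subsets of $\mathbb{N}\times K$. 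In each summand the factor $\omega\mapsto\langle x(\omega),f\rangle$ is measurable by the preliminary reduction applied to $x$, and the factor $\omega\mapsto\langle T(\omega)f,e\rangle$ is measurable by the very definition of the measurable structure placed on $B(H)$. Each partial sum is therefore measurable, and the pointwise limit inherits measurability, which is what was needed.

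I do not anticipate any genuine obstacle: the argument is essentially bookkeeping once the right reduction to coordinate functions is in place. The only mildly delicate point is the equivalence between $H$-valued Borel measurability and measurability of all coordinate projections, which relies on separability of $H$ and on the formula for $\|y(\omega)-y_0\|^2$ above.
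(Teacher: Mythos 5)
Your proof is correct and follows essentially the same route as the paper: both expand $x(\omega)$ in the standard orthonormal basis of $\ell^2(\mathbb{N}\times K)$ and realize $T(\omega)x(\omega)$ as a countable limit of expressions built from the measurable scalar functions $\langle x(\omega),f\rangle$ and $\langle T(\omega)f,e\rangle$. The only (cosmetic) difference is that you check measurability coordinate-by-coordinate after proving the coordinate criterion for $H$-valued Borel measurability, whereas the paper writes $Tx$ directly as a pointwise norm-limit of the measurable $H$-valued truncations $y_n=\sum_{i\le n}\bigl(\sum_{j\le n}\langle x,e_j\rangle\langle Te_j,e_i\rangle\bigr)e_i$.
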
  
\begin{proof}
Let $e_1,e_2,\dots$ be an enumeration of $\mathbb{N}\times K$. Then $Tx$ is the pointwise limit of 
\[y_n=\sum_{i=1}^n \left(\sum_{j=1}^n \langle x,e_j\rangle \langle Te_j,e_i\rangle \right)e_i.\]  
Since $y_n$ is measurable, $Tx$ is measurable too.
\end{proof}

This also has the following consequence.
\begin{lemma}\label{szorzat}
Let $S$ and $T$ be $B(H)$-valued measurable maps. Then $ST$ is a $B(H)$-valued measurable map. 
\end{lemma}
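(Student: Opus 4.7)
The plan is to reduce the statement to the previous lemma (that $Tx$ is $H$-valued measurable whenever $T$ is $B(H)$-valued measurable and $x$ is $H$-valued measurable). By the definition of the $\sigma$-algebra on $B(H)$, it is enough to check that for each fixed $e,f\in \mathbb{N}\times K$ the scalar function
\[
\omega\mapsto \langle S(\omega)T(\omega)e,f\rangle
\]
is measurable.

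First I would view $e$ as a constant, hence trivially measurable, $H$-valued map. Applying the previous lemma to $T$ and this constant map gives that $y:=Te$ is an $H$-valued measurable map. Applying the previous lemma a second time, now to $S$ and $y$, yields that $STe=Sy$ is itself $H$-valued measurable.

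Finally, taking the inner product with the fixed vector $f$ is a bounded linear functional $H\to\mathbb{R}$ (by Cauchy--Schwarz it is even $1$-Lipschitz when $\|f\|_2\le 1$), hence continuous and in particular Borel measurable. Composing with the measurable map $\omega\mapsto S(\omega)T(\omega)e$ shows $\omega\mapsto \langle S(\omega)T(\omega)e,f\rangle$ is measurable, and since $e,f$ were arbitrary we conclude that $ST$ is $B(H)$-valued measurable.

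I do not expect a genuine obstacle here: the conceptual content was already packaged in the preceding lemma, and the only thing left is to notice that the generating scalar evaluations $T\mapsto \langle Te,f\rangle$ factor through two applications of that lemma followed by pairing with a fixed vector. The one point worth being careful about is the direction of the factorization — one should apply the previous lemma to $T$ first (yielding $Te$ as a measurable $H$-valued function) and only then to $S$, rather than trying to rewrite $\langle STe,f\rangle$ as $\langle Te,S^{*}f\rangle$, since the measurability of $\omega\mapsto S(\omega)^{*}$ has not been established at this point.
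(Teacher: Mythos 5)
Your proof is correct and follows exactly the same route as the paper: write $STe=S(Te)$, apply the preceding lemma twice (first to $T$ with the constant map $e$, then to $S$ with the measurable map $Te$), and conclude by pairing with $f$. The paper's version is just a terser rendering of the same argument, and your remark about avoiding the adjoint $S^{*}$ is a sensible precaution, though not an issue the paper needed to address.
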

\begin{proof}
Let $e,f\in \mathbb{N}\times K$. Then $STe=S(Te)$, here $Te$ is an $H$-valued measurable map, so $S(Te)$ is an $H$-valued measurable map. So $\langle STe,f\rangle$ is measurable.
\end{proof}

Given a $B(H)$-valued measurable map $T$, let $T^+$ be its generalized inverse. Note that $T^+$ is not necessary bounded. We need the following theorem.

\begin{theorem}[\cite{inverz}]\label{invth}
For any $x\in H$ the map $T^+x$ is an $H$-valued measurable map.  
\end{theorem}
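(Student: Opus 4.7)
The plan is to approximate the generalized inverse by bounded, everywhere-defined operators that depend measurably on $T$, and then pass to a pointwise limit. For each $\varepsilon>0$, set
\[
S_\varepsilon(T) := (T^*T + \varepsilon I)^{-1} T^*,
\]
the standard Tikhonov regularization. A spectral-theorem computation for $T^*T$ shows that $S_\varepsilon(T) y \to T^+ y$ as $\varepsilon \to 0^+$ for every $y$ in the natural domain $\operatorname{ran}(T) + \operatorname{ran}(T)^\perp$ of $T^+$: on $(\ker T)^\perp$ one has $S_\varepsilon(T) T = \int \lambda/(\lambda+\varepsilon)\,dE_\lambda \to I$ by bounded convergence, while on $\ker(T^*) = \operatorname{ran}(T)^\perp$ the operator $S_\varepsilon(T)$ vanishes identically. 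Since pointwise limits of measurable $H$-valued maps are measurable, it therefore suffices to show that, for every fixed $\varepsilon>0$ and $x \in H$, the map $T \mapsto S_\varepsilon(T) x$ is measurable.

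Next, $T \mapsto T^*$ is measurable because $\langle T^* e,f \rangle = \langle e,Tf\rangle$ is measurable in $T$ by definition of the $\sigma$-algebra on $B(H)$. Combined with Lemma \ref{szorzat}, this gives the measurability of $T \mapsto A := T^*T + \varepsilon I$. The key step is to invert $A$ measurably. Here the difficulty is that no single Neumann series converges uniformly on all of $B(H)$, so one has to localize. The norm $T \mapsto \|T\|$ is measurable, being the supremum of the countably many measurable functions $T \mapsto |\langle Te,f\rangle|$ as $e,f$ range over unit vectors in a fixed countable dense subset of $H$. Hence the sets $N_R := \{T : R-1 < \|T\| \le R\}$ for $R \in \mathbb{N}$ are measurable and partition $B(H)$.

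On $N_R$ the operator $A$ is self-adjoint with spectrum contained in $[\varepsilon,\,R^2+\varepsilon]$. Choosing $\lambda_R := R^2+\varepsilon$ we have $\|I - A/\lambda_R\| \le 1 - \varepsilon/\lambda_R < 1$, so the Neumann series
\[
A^{-1} = \frac{1}{\lambda_R}\sum_{k=0}^\infty \left(I - \frac{A}{\lambda_R}\right)^k
\]
converges in operator norm. Each partial sum is measurable in $T$ by iterated applications of Lemma \ref{szorzat}, and norm convergence implies convergence of $\langle \,\cdot\, e,f\rangle$ for every $e,f$, so the limit is again measurable under the weak-operator $\sigma$-algebra. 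Patching the resulting maps on $N_1, N_2, \dots$ yields a measurable $T \mapsto A^{-1}$ on all of $B(H)$, and then $T \mapsto A^{-1} T^* x = S_\varepsilon(T) x$ is measurable as well. Taking $\varepsilon \to 0^+$ along a countable sequence finishes the argument.

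The main obstacle is the measurable inversion of $A$, specifically the need to localize with respect to $\|T\|$ because the radius of convergence of the Neumann expansion depends on $\|A\|$; once that is handled via the partition by the $N_R$, the rest is routine. The $\varepsilon \to 0$ convergence is classical from the spectral theorem and presents no measurability difficulty, being a countable pointwise limit.
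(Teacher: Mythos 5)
The paper does not prove this statement at all: it is quoted verbatim from Nashed and Salehi \cite{inverz}, so there is no internal proof to compare against. Your argument is a correct, self-contained substitute, and it follows the classical route (Tikhonov regularization $S_\varepsilon(T)=(T^*T+\varepsilon I)^{-1}T^*$, which is essentially the representation the cited reference itself exploits), so you have in effect reproduced the proof that the paper outsources. The two nontrivial measurability points are both handled properly: the measurability of $T\mapsto\|T\|$ as a countable supremum (note that $\langle Te,f\rangle$ for $e,f$ in a countable dense set, rather than basis vectors, is measurable only after a short limiting argument expanding $e$ and $f$ in the basis --- worth a sentence), and the localized Neumann inversion of $T^*T+\varepsilon I$ on the norm-shells $N_R$, where self-adjointness gives $\|I-A/\lambda_R\|\le 1-\varepsilon/\lambda_R<1$ via the spectral radius. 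Two small points to tighten. First, the convergence $S_\varepsilon(T)T=\int\lambda/(\lambda+\varepsilon)\,dE_\lambda\to I$ on $(\ker T)^\perp$ is in the strong operator topology (dominated convergence against the spectral measure $d\langle E_\lambda z,z\rangle$, which puts no mass at $0$ for $z\perp\ker T^*T$), not in norm; that is all you need, but the phrase ``by bounded convergence'' should be read that way. Second, $T^+$ is only densely defined when $\operatorname{ran}(T)$ is not closed, so for the statement ``for any $x\in H$'' you should fix a convention: the limit $\lim_{\varepsilon\to 0}S_\varepsilon(T)x$ exists exactly when $x\in\operatorname{dom}(T^+)=\operatorname{ran}(T)+\operatorname{ran}(T)^\perp$, the set of $T$ for which this limit exists is measurable, and one extends by $0$ off that set; this costs nothing but should be said, since the application in the appendix applies $P^+$ to basis vectors that need not lie in $\operatorname{dom}(P^+)$.
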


From this we obtain the following lemma.
\begin{lemma}
Let $T$ be a $B(H)$-valued measurable map, and let $T=UP$ be its unique polar decomposition, i.e. $P=\sqrt{A^* A}$ and $U=TP^+$. Then $U$ is a $B(H)$-valued  measurable map.
\end{lemma}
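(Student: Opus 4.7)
The idea is to decompose $U = TP^+$ through a chain of measurable operations. The three inputs are: (i) the product lemma (Lemma \ref{szorzat}), (ii) the lemma that applying a measurable $B(H)$-valued map to a measurable $H$-valued map gives a measurable $H$-valued map, and (iii) the Kuratowski--Ryll-Nardzewski-type Theorem \ref{invth} that $P^+ e$ is an $H$-valued measurable map for every fixed $e$. The only genuinely nontrivial ingredient beyond these is the measurability of the operator square root $A \mapsto \sqrt{A}$ on the cone of bounded positive operators. Assuming that for a moment, the argument is immediate: the adjoint map $T \mapsto T^*$ preserves measurability, since in the real setting $\langle T^* e, f\rangle = \langle Tf, e\rangle$ is a coordinate of $T$; Lemma \ref{szorzat} then gives measurability of $T^*T$; taking square roots measurably gives $P = \sqrt{T^*T}$; Theorem \ref{invth} gives measurability of $e \mapsto P^+ e$ for each fixed $e$; and finally the lemma on applying operators to vectors shows that $U e = T(P^+ e)$ is a measurable $H$-valued map for each $e$, which is exactly the statement that the matrix entries $\langle U e, f\rangle$ of $U$ are measurable.

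\textbf{Measurability of the square root.} This is where I expect the real work, and the plan is to reduce it to polynomial approximation together with a piecewise decomposition by operator norm. First observe that $A \mapsto \|A\|$ is measurable: since $H$ is separable one has $\|A\| = \sup_{x,y \in D} \langle Ax, y\rangle$ over a countable dense subset $D$ of the unit ball, a countable supremum of measurable functions. For each positive integer $n$ fix once and for all a sequence of real polynomials $p_{n,m}(\lambda)$ converging uniformly to $\sqrt{\lambda}$ on the compact interval $[0,n]$ (Weierstrass approximation). On the measurable set $X_n = \{n-1 \le \|A\| < n\}$, the spectrum of the positive operator $A$ lies in $[0,n]$, so the continuous functional calculus gives $\|p_{n,m}(A) - \sqrt{A}\| \to 0$ as $m \to \infty$, hence in particular $\langle p_{n,m}(A) e, f\rangle \to \langle \sqrt{A} e, f\rangle$ for every $e, f$. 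Each $p_{n,m}(A)$ is measurable by iterated application of Lemma \ref{szorzat}, and pointwise limits preserve measurability of matrix entries, so $\sqrt{A}$ is measurable on $X_n$. Since the $X_n$ partition the domain, $\sqrt{A}$ is globally measurable.

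\textbf{Assembling the pieces.} Putting the steps together: $T^*$ is measurable, so $T^*T$ is measurable (Lemma \ref{szorzat}), so $P = \sqrt{T^*T}$ is measurable by the paragraph above. Theorem \ref{invth} then supplies, for each $e \in \mathbb{N} \times K$, the $H$-valued measurable map $e \mapsto P^+ e$. Although $P^+$ itself may be unbounded, the operator $U = TP^+$ is a partial isometry and hence bounded, and the lemma on applying a measurable operator to a measurable vector gives that $U e = T(P^+ e)$ is $H$-valued measurable for each fixed $e$. Consequently $\langle U e, f\rangle$ is a measurable real-valued map for every $e, f \in \mathbb{N} \times K$, which by the definition of the $\sigma$-algebra on $B(H)$ is exactly the conclusion that $U$ is $B(H)$-valued measurable.

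\textbf{Main obstacle.} The bottleneck is the square-root step; everything else is essentially bookkeeping. The subtle point in that step is that a single sequence of polynomials cannot work uniformly in $A$ (since $\sqrt{\cdot}$ is not uniformly approximable by polynomials on $[0,\infty)$), which is why the piecewise decomposition by $\|A\|$ is forced. One could alternatively invoke an off-the-shelf measurable functional calculus, but the polynomial approach keeps the argument self-contained and fits the framework already set up in the appendix.
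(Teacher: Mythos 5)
Your proof is correct and follows essentially the same route as the paper's own: Lemma \ref{szorzat} for products, polynomial approximation of the square root to obtain measurability of $P=\sqrt{T^*T}$, and Theorem \ref{invth} combined with the operator-applied-to-vector argument to conclude that $Ue=T(P^+e)$ is measurable. The only difference is that you spell out the partition of the domain by operator norm needed to make the Weierstrass approximation of $\sqrt{\cdot}$ work when no uniform norm bound is assumed, a detail the paper's one-line square-root step leaves implicit.
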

\begin{proof}
From Lemma \ref{szorzat} $(A^*A)^n$ is a measurable map for all $n$. Approximating the square root function by polynomials we get that $P$ is a $B(H)$-valued measurable map. The statement follows by combining Theorem \ref{invth} and the argument of the proof of Lemma \ref{szorzat}.  
\end{proof}

Then it is not difficult to prove the following.
\begin{lemma}
For an \rwo\  $(G,o,T)$ let $T=UP$ be the polar decomposition of $T$, then the map $(G,o,T)\mapsto (G,o,U)$ is measurable.
\end{lemma}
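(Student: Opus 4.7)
The plan is to reduce the statement to the already-established measurability of $T\mapsto U$ at the level of operators on the fixed Hilbert space $H=\ell^2(\mathbb{N}\times K)$, via the half-canonical realization.

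First, I would observe that the assignment $(G,o,T)\mapsto (G,o,U)$ is well defined on isomorphism classes: if $\psi$ is a root-preserving graph isomorphism between two representatives with $T_2=V_\psi T_1 V_\psi^*$ (where $V_\psi$ is the unitary induced by $\bar\psi$), then by uniqueness of the polar decomposition with $\ker U=\ker P$ we get $U_2=V_\psi U_1 V_\psi^*$, so $(G_1,o_1,U_1)$ and $(G_2,o_2,U_2)$ are isomorphic as \rwo s. Hence the map is well defined on $\mathcal{\rwo}$.

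Next, I would use the measurable section $f:\mathcal{\rwo}\to\mathcal{HC}$ constructed above. Define $\Phi:\mathcal{HC}\to\mathcal{HC}$ by $\Phi(G,0,T')=(G,0,U')$, where $U'$ is the partial isometry in the polar decomposition of $T'$ viewed as an operator in $B(H)$. To check that $\Phi$ is measurable, note that the metric on $\mathcal{HC}$ makes every coordinate functional $(G,0,T')\mapsto \langle T'(v,k),(v',k')\rangle$ continuous, so the Borel $\sigma$-algebra on $\mathcal{HC}$ is at least as fine as the pullback of the evaluation $\sigma$-algebra on $B(H)$ (while the graph coordinate is locally constant on $\mathcal{HC}$). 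Thus measurability of $\Phi$ reduces to the measurability of $T'\mapsto U'$ as a map $B(H)\to B(H)$, which is exactly the previous lemma. Since $g:\mathcal{HC}\to\mathcal{\rwo}$ is continuous, the composition $g\circ \Phi\circ f:\mathcal{\rwo}\to\mathcal{\rwo}$ is measurable.

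Finally, by the well-definedness observation above, $g\circ\Phi\circ f$ agrees with the intended map $(G,o,T)\mapsto (G,o,U)$ on isomorphism classes (regardless of which half-canonical representative $f$ picks). This gives the desired measurability. The main delicate point, already handled in the preceding lemmas, is the measurability of $T'\mapsto U'$ on $B(H)$ itself (which goes through Theorem~\ref{invth} and polynomial approximation of $\sqrt{\cdot}$); the present lemma only has to package it with the half-canonical framework and check compatibility with isomorphism, and both of these are essentially formal.
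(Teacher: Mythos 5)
Your argument is correct and is exactly the route the paper intends: the appendix sets up the half-canonical space $\mathcal{HC}$, the measurable section $f$, and the operator-level measurability of $T\mapsto U$ precisely so that this lemma follows by the composition $g\circ\Phi\circ f$ together with the well-definedness check you make (the paper itself leaves this step as "not difficult to prove"). No gaps.
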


\end{appendix}




\section*{Acknowledgements}
The author is grateful to Mikl\'os Ab\'ert for the useful discussions throughout the writing of this paper, to Russel Lyons and the anonymous referee  for their valuable comments. The author was partially supported by the ERC Consolidator Grant 648017.


\bibliographystyle{imsart-number} 

%




\end{document}